\documentclass[11pt]{article}
\usepackage[utf8]{inputenc}
\usepackage[english]{babel}

\usepackage{amsfonts}
\usepackage{amsmath,amsthm,amscd,amssymb,mathrsfs,setspace}
\usepackage{mathtools}
\usepackage{cite}
\usepackage{latexsym,epsf,epsfig}
\usepackage{color}
\usepackage[a4paper,top=2cm,bottom=2cm,left=3cm,right=3cm,marginparwidth=1.75cm]{geometry}
\usepackage[colorlinks]{hyperref}
\usepackage[nameinlink,capitalise]{cleveref}
\usepackage[mathscr]{euscript}
\usepackage{dutchcal}
\usepackage{algorithm}
\usepackage{algorithmicx}
\usepackage{algpseudocode}
\usepackage[table]{xcolor}
\usepackage{soul}
\usepackage{subcaption}
\usepackage{siunitx}
\usepackage{tikz}
\usetikzlibrary{decorations.pathreplacing,calligraphy}

\usepackage{booktabs}
\usepackage{multirow}
\usepackage{adjustbox}
\usepackage{authblk}

\definecolor{lccx}{HTML}{92268F}

\ifpdf
\hypersetup{
	pdftitle={McC 4 MIPDECOs},
	pdfauthor={S. Leyffer and P. Manns},
	linkcolor=lccx
}
\fi

\newcommand{\N}{\mathbb{N}}
\newcommand{\R}{\mathbb{R}}
\newcommand{\Z}{\mathbb{Z}}

\newcommand{\calQ}{\mathcal{Q}}
\newcommand{\calF}{\mathcal{F}}

\DeclareMathOperator*{\BV}{BV}

\DeclareMathOperator*{\TV}{TV}
\DeclareMathOperator*{\TVh}{TV{}_h}
\DeclareMathOperator*{\dd}{d}

\DeclareMathOperator*{\dvg}{div}
\newcommand{\weakto}{\rightharpoonup}
\newcommand{\weakstarto}{\stackrel{\ast}{\rightharpoonup}}

\newtheorem{theorem}{Theorem}[section]
\newtheorem{proposition}[theorem]{Proposition}
\newtheorem{lemma}[theorem]{Lemma}

\theoremstyle{definition}

\newtheorem{assumption}[theorem]{Assumption}

\theoremstyle{remark}
\newtheorem{remark}[theorem]{Remark}

\crefname{assumption}{Assumption}{Assumptions}
\Crefname{assumption}{Assumption}{Assumptions}

\title{McCormick envelopes in mixed-integer PDE-constrained optimization}
\author[1]{Sven Leyffer}
\author[2]{Paul Manns}
\affil[1]{Mathematics and Computer Science Division, Argonne National Laboratory, Lemont, IL 60439, USA, \textit{leyffer@anl.gov}}
\affil[2]{Faculty of Mathematics, TU Dortmund University, 44227 Dortmund, Germany, \textit{paul.manns@tu-dortmund.de}}

\begin{document}
\maketitle
\begin{abstract}
McCormick envelopes are a standard tool for deriving convex relaxations of optimization problems that involve polynomial terms. 
Such McCormick relaxations provide lower bounds, for example, in branch-and-bound procedures for mixed-integer
nonlinear programs but have not gained much attention in PDE-constrained optimization so far.
This lack of attention may be due to the distributed nature of such problems, which on the one hand
leads to infinitely many linear constraints (generally state constraints that may be difficult to handle)
in addition to the state equation for a pointwise formulation of the McCormick envelopes
and renders bound-tightening procedures that successively improve the resulting convex relaxations computationally intractable.

We analyze McCormick envelopes for a model problem class that is governed by a semilinear PDE involving a bilinearity
and integrality constraints. We approximate the nonlinearity and in turn the McCormick envelopes by averaging the involved
terms over the cells of a partition of the computational domain on which the PDE is defined.
This yields convex relaxations that underestimate the original problem up to an a priori error estimate
that depends on the mesh size of the discretization. These approximate McCormick relaxations can be improved
by means of an optimization-based bound-tightening procedure. We show that their minimizers converge
to minimizers to a limit problem with a pointwise formulation of the McCormick envelopes when driving the mesh size to zero.

We provide a computational example, for which we certify all of our imposed assumptions. The results
point to both the potential of the methodology and the gaps in the research that need to be closed.

Our methodology provides a framework  first for obtaining pointwise underestimators for nonconvexities
and second for approximating them with finitely many linear inequalities in an infinite-dimensional
setting.
\end{abstract}

\section{Introduction}\label{sec:intro}
We are interested in the global optimization of mixed-integer PDE-constrained optimization
problems (MIPDECOs) that feature nonconvex terms. MIPDECOs arise in many real-world applications
such as topology optimization \cite{haslinger2015topology,leyffer2021convergence,bendsoe2013topology}
and supply network optimization \cite{pfetsch2015validation,d2015mathematical}.
A prototypical problem class is
\begin{gather}\label{eq:mipdeco}
\begin{aligned}
\min_{u, w}\ & j(u, w) + \alpha R(w) \\
\text{ s.t.\ } & Au + N(u,w) = 0\\
& w \in C \\
& w(x) \in \Z \text{ for almost every (a.e.) } x \in \Omega,
\end{aligned}\tag{MIPDECO}
\end{gather}
where $w : \Omega \to \R$ is a measurable input function (generally also called \emph{control}) on a
computational domain $\Omega \subset \R^d$, $d \in \N$.
The function $w$ needs to lie in a convex set $C$, for example, satisfy bound constraints or volume restrictions in
topology optimization, and additionally satisfies the nonconvex pointwise integrality restriction $w(x) \in \Z$ in $\Omega$.
The control function $w$ enters the problem as an input of the state equation (PDE) $Au + N(u,w) = 0$, which we assume to
have a unique solution $u$ for a given $w$. The operator $A$ could be an elliptic differential
operator that includes appropriate boundary conditions and $N(u,w)$ a term with a lower differentiability index on $u$
than $Au$ that may involve a nonlinearity.  The pair of $w$ and the implied solution $u$ to the state equation will minimize
an objective term $j$, which is often a quadratic fidelity term on $u$. Moreover, desired
structures on $w$ can be promoted by means of the regularization term $\alpha R(w)$
for some $\alpha > 0$. A formal setting is provided in \cref{sec:ocp}.

A beneficial choice for $R$ in the context of mixed-integer PDE-constrained optimization
has been the regularization $R = \TV$, where $\TV$ denotes
the total variation seminorm on the space of integrable functions \cite{ambrosio2000functions}.
Such a regularization may be necessary to guarantee the existence of solutions  \cite{leyffer2022sequential,manns2023integer}.
The total variation of an integer-valued function is the $(d-1)$-dimensional volume of the interfaces between its
level sets weighted by the jump heights over the respective interfaces. This choice implies that instances of
\eqref{eq:mipdeco} admit solutions under mild assumptions; see, for example, \cite{burger2012exact,manns2023on}.
Recent work on solving such problems to the satisfaction of the stationarity concept has been studied in
\cite{leyffer2022sequential,manns2023on,severitt2023efficient,manns2023homotopy}.
The computation of lower bounds for a linear PDE, thereby yielding a convex set after omitting the
integrality constraints, and their integration into a branch-and-bound algorithm have recently been
studied by Buchheim et al.\ in
\cite{buchheim2022bounded,buchheim2024compact,buchheim2024parabolicI,buchheim2024parabolicII,buchheim2024parabolicIII}
for $d = 1$.

In this work we strive to obtain convex relaxations and thus lower bounds on \eqref{eq:mipdeco}
in the presence of a structured nonlinearity of the feasible set induced by a specific choice of $N$.
To be precise, we restrict our work to the case that $A$ is an elliptic operator, $R = \TV$, and  the
nonlinearity $N$ in the state equation is in fact a bilinearity, specifically
\[ N(u,w)(x) \coloneqq u(x)w(x) \text{ for a.e.\ } x \in \Omega. \]
We replace the nonlinear state equation by a linear state equation and linear inequalities based on
McCormick envelopes that have been introduced for finite-dimensional nonconvex optimization problems in
\cite{mccormick1976computability}. Our aim is to obtain valid and (ideally) tight lower bounds for
instances of \eqref{eq:mipdeco}. McCormick envelopes and relaxations have already been studied in the
context of global optimization with ODEs finite-dimensional control inputs in  \cite{papamichail2005proof,singer2006global,scott2013improved,scott2011generalized,scott2013nonlinear,sahlodin2011convex,sahlodin2011discretize,wilhelm2019global,ye2024modification}. These articles develop and analyze arguments to show the existence
and computation of solutions to ODEs whose solution trajectories are convex and concave lower and upper bounds on
the solution trajectories of an original ODE that is influenced by means of a finite-dimensional parameter.
In the same spirit such pointwise bounds on the resulting trajectory are transferred to a class of parabolic PDEs
in \cite{azunre2017bounding}. In the context of infinite-dimensional control inputs for
ODE-constrained optimization, the use of McCormick envelopes was proposed in \cite{houska2014branch} and
\cite{scott2011convex}. The latter generalizes ideas from the aforementioned literature to
functions varying pointwise in time. The authors obtain the existence of convex and concave trajectories
that over- and underestimate the trajectories for the parameterized ODE when over- and underestimations
exist pointwise a.e.

We consider a setting, where the existence of such envelopes is straightforward
and provide a possibility to on the one hand only have an approximate relaxation on the original problem
but which on the other hand allows for a way to accelerate an optimization-based
bound-tightening procedure in order to improve the
approximate relaxations.
Generally, such a bound-tightening procedure can become very compute-intensive,
it is called \emph{one of the most expensive bound tightening procedures} in
\cite{gleixner2017three}. Moreover, state-constrained optimal control problems,
in particular with infinitely many state constraints, are notoriously hard to analyze and solve.

Therefore, we introduce an approximation scheme that allows us to work with a finite number of
linear inequalities and variables that is substantially smaller than the number of variables used
for the discretization of the PDE. To this end, we partition the computational domain into
finitely many grid cells and replace the product $uw$ by a product of local averages
$(P_h u)(P_h w)$, where $P_h$ denotes the local averaging defined in \eqref{eq:proj_dg0} below.
This enables us to derive approximate McCormick relaxations that can be described by finitely many linear inequalities.
Using beneficial regularity or continuity properties of the solution to the underlying PDE, specifically
that its solution attains relatively close values in grid cells that are spatially close to each other,
we expect that a moderate mesh size may already yield a good approximation of the lower bound. 
In this way we strive for a much faster computation
in order to obtain approximate lower bounds on \eqref{eq:mipdeco}, which are required for our
goal of solving instances of \eqref{eq:mipdeco} to global optimality. We highlight
that the computation of such lower bounds is also interesting for global optimization in its
own right even without the context of integer-valued input functions.

We believe that our restrictions on the setting are sensible. First, $R = \TV$ is a  useful
regularization for MIPDECOs  because it results in crisp  designs (level sets of the input function).
Moreover, the analytical properties of the $\TV$-regularization term also help  streamline
our analysis although the general strategy can also be carried out for many other choices of $R$.
Similarly, higher-order multilinear terms for $N(u,w)$ involving $u$, for example, $u^k w$, $k \ge 2$,
would make the PDE analysis, namely, the existence of solutions and their regularity, considerably
more complicated. In this regard many of our arguments are streamlined and do not distract from the
analysis and approximation of the McCormick relaxation. Consequently, this article will be read as a recipe to first obtain a pointwise generalization of underestimators for nonconvexities
as are known from finite-dimensional nonconvex optimization and then approximate this pointwise generalization
with finitely many linear inequalities using approximation properties that are due to the PDE setting.

\subsection*{Contribution}
We provide a formal definition of an optimal control problem with the aforementioned features
and derive McCormick envelopes using pointwise constraints. We then introduce a grid that
discretizes the computational domain $\Omega$, and we approximate the inequalities defining
the McCormick envelope by a local averaging over the grid cells. In a second step, we 
discretize the control input function on the same grid, thereby reducing the complexity of
the problem further. We prove the existence of solutions to the approximate convex relaxations
and an estimate on the lower bound for the optimal control problem depending on the mesh size
of the grid. Moreover, we prove that minimizers of the approximate convex relaxations converge
to minimizers of the convex relaxation obtained by imposing the McCormick envelopes by means
of pointwise constraints when driving the mesh size to zero.

We introduce and analyze the aforementioned optimization-based bound-tightening procedure that
allows us to tighten given bounds on the state variable $u$ and in turn to increase the objective
value of the (approximate) McCormick relaxation and thus to tighten the induced lower bounds
on the optimal objective value of the optimal control problem.

We then verify all assumptions imposed on our analysis for an exemplary optimal control problem
that is governed by an elliptic PDE that is defined on $\Omega \subset \R$.
In particular, this example allows us to provide tight values for
all constants in the estimates we impose. We then give details on how we discretize the PDE
and set up a computational experiment where we insert the aforementioned constants. We compute
and compare the approximate lower bounds with and without the bound-tightening procedure applied
with each other to upper bounds with and without an integrality restriction on the
controls in order to assess the quality of the analyzed methodology.
We also provide computational results for a second example that is defined on a two-dimensional domain.
The observations confirm those of the one-dimensional example.

\subsection*{Structure of the remainder}
We continue with a brief introduction to our notation. In
\cref{sec:mccormick} we introduce McCormick envelopes for optimal control problems with 
semilinear state equations that feature a bilinear term. The McCormick envelopes
are first introduced and analyzed in a pointwise fashion. Then a local averaging is introduced
in to approximate the pointwise inequalities by finitely many linear inequalities.
Then employ a piecewise constant control function ansatz, as is desired in
our motivating application of integer optimal control, to obtain a further approximation
with the state variable $u$ still remaining in function space. In \cref{sec:elliptic_example} we
introduce an elliptic PDE that we use as a showcase to verify all of the assumptions that
are imposed for the well-definedness and approximation properties of the different McCormick
settings from \cref{sec:mccormick}. We describe the finite-element discretization
of the state equation for our guiding example, implement the bound-tightening procedure, and
perform our computational experiments in \cref{sec:computational_experiments} (1D)
and \cref{sec:computational_experiments_2D} (2D).
We draw a conclusion in \cref{sec:conclusion}.

\subsection*{Notation}
Let $\Omega \subset \R^d$, $d \in \N$, be a bounded domain. The projection in the space of integrable functions
$L^1(\Omega)$ to piecewise constant functions on a partition $\calQ_h = \{Q_h^1,\ldots,Q_h^{N_h}\}$ of $\Omega$
is denoted by $P_h$; specifically
\begin{gather}\label{eq:proj_dg0}
P_h : L^1(\Omega) \ni f 
\mapsto
\sum_{i=1}^{N_h} \frac{1}{|Q^i_h|}\int_{Q^i_h}f(x) \dd x \chi_{Q^i_h} \in L^1(\Omega).
\end{gather}
For a measurable set $A \subset \R^d$, $d\in \N$, we denote its $d$-dimensional Lebesgue measure
by $|A|$. We say that a function $w \in L^1(\Omega)$ is of bounded variation if
its total variation seminorm $\TV(w)$ is finite. The Banach space of functions in $L^1(\Omega)$
of bounded variation with norm $\|\cdot\|_{\BV} = \|\cdot\|_{L^1} + \TV(\cdot)$
is then denoted by $\BV(\Omega)$; see \cite{ambrosio2000functions} for details on functions
of bounded variation. We recall that the total variation seminorm for $w \in L^1(\Omega)$
is defined as
\[ \TV(w) \coloneqq \sup\left\{
\int_{\Omega} w(x) \dvg \phi(x) \dd x \,\middle|\,
\phi \in C_c^{1}(\Omega,\R^d),\ \max_{x \in \Omega} \|\phi(x)\| \le 1 \right\},
\]
where $\|\cdot\|$ denotes the Euclidean norm on $\R^d$.
On one-dimensional domains $\Omega = (a,b)$ for $a$, $b \in \R$, every element of $\BV(\Omega)$
has a left-continuous representative, and the total variation is the sum of the jump heights
of the left-continuous representative (this interpretation also works for the right-continuous
representative or other so-called good representatives; see \cite{ambrosio2000functions}).

\section{McCormick envelopes for optimal control problems
with state equations that have bilinear terms}\label{sec:mccormick}
In this section we derive and analyze McCormick envelopes and relaxations for a class of
nonconvex optimal control problems where the nonconvexity
stems from bilinear terms in the state equation.
We first introduce a prototypical optimal control problem in \cref{sec:ocp}.
We then derive pointwise a.e.\ McCormick envelopes in \cref{sec:mccormick_pointwise}
and, subsequently, provide a local averaging in two stages that yields approximate lower bounds but implies  problems,
still in function space, which feature only finitely many additional
linear inequalities in \cref{sec:mccormick_h}. In \cref{sec:obbt} we prove the validity
of a bound-tightening procedure that may improve (increase) the (approximate) lower bounds.

\subsection{Optimal control problem
with nonconvex
bilinearity}\label{sec:ocp}
We consider the following prototypical setting with
a nonconvexity induced by
a bilinear term in the state equation,
 but note that the strategy we develop can be 
transferred to many other settings with lower or slightly different regularity assumptions
with a small amount of modifications.

Let $\Omega$ be a bounded domain. Let $U$ be a reflexive Banach space that is compactly embeded into 
$H \coloneqq L^2(\Omega)$, that is, $U \hookrightarrow^c H$,
so that we may work with the triple
$U \hookrightarrow^c H \cong H^* \hookrightarrow^c U^*$ of compact embeddings
and the isometric isomorphic identification $H \cong H^*$. In optimal control 
terminology, $U$ is the \emph{state space}. For the \emph{control space}, we consider the space
$W \coloneqq \BV(\Omega)$ of functions of bounded variation, which is motivated by the 
compactness and approximation properties it  provides in the control space and our intended 
application in the context of integer optimal control. Specifically, 
$\BV(\Omega)$-regularity allows us to approximate a 
function by its average on a partition of the domain with an error that is bounded by a 
scalar multiple of the mesh  size. Note that other spaces like the Sobolev space $H^1(\Omega)$
 also provide this property.

The optimal control problem is
\begin{gather}\label{eq:ocp}
\begin{aligned}
\min_{u, w}\ & j(u, w) + \alpha \TV(w) \\
\text{ s.t.\ } & A u + u w =  f \enskip\text{ in } U^*,  \\
& w \in C, u \in U
\end{aligned}\tag{OCP}
\end{gather}
for a nonempty, bounded, closed, and convex set $C \subset H$, an objective $j : U \times H \to \R$
that is Lipschitz continuous on bounded sets and a linear and bounded differential
operator $A : U \to U^*$ with bounded inverse $A^{-1} : U^* \to U$.
The nonconvexity of the
problem stems from the
bilinear term $uw$ in
the state equation.

Note that the assumptions on $C$ imply that it is a weakly sequentially compact subset of $H$
so that we obtain the regularity $w \in H$ for all feasible $w$. Together
with the $\TV$-seminorm in the objective, we obtain the regularity $w \in W$ for all feasible $w$
with finite objective value and the assumed structure $j : U \times H \to \R$ is well-defined.
Moreover, we note that the continuous embedding $W \hookrightarrow H$ holds for $d \in \{1,2\}$ 
so that we can equivalently consider $W$ or $W \cap H$ as control space. For $d \ge 3$, defining
$W \cap H$ as control space here is slightly more specific but not necessary for our analysis
and would complicate our notation later.
The vector $f \in U^*$ is a fixed datum that parameterizes the optimization problem, and
we assume that the product $uw$ is a measurable function that is (also) an element of $U^*$
for all tuples $(u,w) \in U \times W$. This follows, for example, if $C$ is bounded
in $L^\infty(\Omega)$, too. The term $\TV(w)$ denotes the total variation seminorm
of $w$ that enforces its required $\BV(\Omega)$-regularity, and $\alpha > 0$ is a
positive scalar. Assuming that the state equation and the original optimal
control problem admit unique solutions, the presence of the term $uw$
implies a nonlinear dependence of the solution to the state equation
on $w$, which in turn implies that \eqref{eq:ocp} is generally nonconvex even if $j$
is convex. 

\subsection{Pointwise McCormick envelopes}\label{sec:mccormick_pointwise}
McCormick envelopes have been introduced for finite-dimensional
nonconvex optimization problems in \cite{mccormick1976computability}.
The key idea for the case of \eqref{eq:ocp} is to derive a new optimal 
control problem, the so-called McCormick envelope, by relaxing 
the state constraints. Specifically, the bilinear term is replaced
by a new variable $z$, which is a measurable function
that is also an element of $U^*$, and a set of linear inequalities
on $z$, $u$, and $w$ that preserve the feasibility of the state
equation; that is, if $u$ and $w$ satisfy $Au = uw + f$, then
the choice $z = uw$ satisfies all of the newly introduced
linear inequalities. Here, we make the following assumption.
\begin{assumption}\label{ass:mcc}
Let there be bounds $u_\ell$, $u_u \in H$ and $w_\ell$, $w_u \in L^\infty(\Omega)$
so that $u_\ell \le u \le u_u$ and $w_\ell \le w \le w_u$ hold pointwise a.e.\
for all $(u, w) \in U \times W$ that solve $Au + uw = f$ and satisfy $w \in C$.
\end{assumption}
Clearly, the multiplication of these bounds, $u_\ell w_\ell$, $u_\ell w_u$, $u_u w_\ell$, $u_u w_u \in H$ by H\"older's
inequality. We note that the assumption of bounds on the state is quite  strong, but we emphasize that such bounds are generally
required only in the subset of the computational domain $\Omega$ on which $N$ actually acts.
For example, terms like $N(u, w) = \chi_{A} u w$ for compact subsets $A \subset \subset \Omega$ may occur
in topology optimization problems like in \cite{haslinger2015topology} because the control design
may be restricted to $A$ and is extended by zero to the complement of $A$. Consequently, higher interior regularity
of the PDE solutions can help to establish implementable (i.e., generally uniform) bounds.
For rich classes of PDEs that are governed by an elliptic operator like in our setting, $L^\infty(\Omega)$-bounds
on $u$ can be established by following of the strategy and results in Appendix B in \cite{kinderlehrer2000introduction} under 
assumptions on $f$ and the coefficients of the elliptic operator, see also Theorem 4.5 and its proof in 
\cite{troltzsch2010optimal}.

We note that the different parts of the proofs do not require the full assumed regularity
here, but again we prefer this slightly more restrictive setting in the interest of
a cleaner presentation. Under \cref{ass:mcc}, the McCormick relaxation of 
\eqref{eq:ocp} can be derived pointwise a.e.\ in a similar way as 
for finite-dimensional problems; see \cite{mccormick1976computability}:
\begin{gather}\label{eq:mcc}
\begin{aligned}
\min_{u, w, z}\ & j(u, w) + \alpha \TV(w) \\
\text{ s.t.\ }\,
& A u + z = f \enskip\text{ in } U^*,  \\
&z \ge u_\ell w + u w_\ell - u_\ell w_\ell\enskip\text{a.e.},\\ 
&z \ge u_u w + u w_u - u_u w_u\enskip\text{a.e.},\\
&z \le u_u w + u w_\ell - u_u w_\ell\enskip\text{a.e.},\\
&z \le u_\ell w + u w_u - u_\ell w_u\enskip\text{a.e.},\\
&u \in U,\; u_\ell \le u \le u_u\enskip\text{a.e.},\\
&w \in C,\; w_\ell \le w \le w_u\enskip\text{a.e.}
\end{aligned}\tag{McC}
\end{gather}
We obtain that \eqref{eq:mcc} is a relaxation of \eqref{eq:ocp} with a convex feasible set in the proposition below.
As mentioned in the introduction, (generalized \cite{scott2011generalized}) McCormick relaxations have been
studied for ODEs with finite-dimensional inputs to obtain concave and convex upper and lower bounds
for their solution trajectories
\cite{papamichail2005proof,singer2006global,scott2013improved,scott2011generalized,scott2013nonlinear,sahlodin2011convex,sahlodin2011discretize,wilhelm2019global,ye2024modification}. This implies analogous relaxation results for their settings.
In the context of ODE-constrained optimal control, such a result was shown for a pointwise (that is infinite-dimensional)
control for nonlinearities given by factorable functions and appropriate Lipschitz assumptions,
see Lemma 1, Assumption 3, and Theorem 1 in \cite{scott2011convex}. Their observations can likely be used to transfer
some of our ideas to more general nonlinearities than present in \eqref{eq:ocp}. We stress that the analysis of the PDE
becomes increasingly difficult when higher-degree monomials are present and generally requires a study of the specific case
to determine if the PDE has a solution and if yes if it lies in a function space that provides enough regularity for the analysis below.
Using McCormick relaxations for such infinite-dimensional control settings and envelopes around solution trajectories
was also proposed in \cite{houska2014branch}.
\begin{proposition}\label{prp:ocp_mcc_basics}
Let \cref{ass:mcc} hold.
\begin{enumerate}
\item If $(u,w)$ is feasible for \eqref{eq:ocp}, then $(u,w,z)$
with $z = uw$ is feasible for \eqref{eq:mcc}. In particular,
the infimum \eqref{eq:mcc} is a lower bound on the infimum
of \eqref{eq:ocp}.
\item The feasible set of \eqref{eq:mcc} is convex and bounded
in $U \times H \times H$.
\end{enumerate}
\end{proposition}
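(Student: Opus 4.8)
The plan is to handle the two assertions separately: the first is a pointwise verification plus an infimum comparison, the second splits into a convexity check (each constraint is affine or convex) and a short boundedness bootstrap using the mapping properties of $A$.

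For the first assertion, let $(u,w)$ be feasible for \eqref{eq:ocp} and put $z \coloneqq uw$. By \cref{ass:mcc} we have $u_\ell \le u \le u_u$ and $w_\ell \le w \le w_u$ pointwise a.e. Expanding the four nonnegative products $(u-u_\ell)(w-w_\ell)$, $(u_u-u)(w_u-w)$, $(u_u-u)(w-w_\ell)$, $(u-u_\ell)(w_u-w)$ and substituting $z = uw$ yields, pointwise a.e., exactly the four McCormick inequalities of \eqref{eq:mcc} (for instance $(u-u_\ell)(w-w_\ell)\ge 0$ rearranges to $z \ge u_\ell w + u w_\ell - u_\ell w_\ell$). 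The equality $Au + uw = f$ is precisely $Au + z = f$, and the remaining constraints $u \in U$, $w \in C$, and the box constraints on $u$ and $w$ are inherited from \eqref{eq:ocp} (using \cref{ass:mcc}); hence $(u,w,z)$ is feasible for \eqref{eq:mcc}. Since the objective $j(u,w) + \alpha\TV(w)$ does not involve $z$, the objective values agree, so passing to the infimum over feasible points shows that the infimum of \eqref{eq:mcc} is at most the infimum of \eqref{eq:ocp}.

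For convexity, I would note that every constraint of \eqref{eq:mcc} is affine or convex in $(u,w,z)$: the state equation $Au + z = f$ is affine by linearity of $A$; each McCormick inequality is affine because $w_\ell$, $w_u \in L^\infty(\Omega)$ and $u_\ell$, $u_u \in H$ are fixed, so terms such as $u w_\ell$ and $u_\ell w$ are linear in $u$ and in $w$; the box constraints define convex sets; $U$ is a linear subspace and $C$ is convex by hypothesis. The feasible set, being the intersection of these convex sets, is convex. For boundedness, I would argue in two steps. The box constraint $u_\ell \le u \le u_u$ a.e.\ with $u_\ell$, $u_u \in H$ gives $|u| \le |u_\ell| + |u_u|$ a.e., so $\|u\|_H$ is uniformly bounded on the feasible set; similarly $w_\ell \le w \le w_u$ a.e.\ with $w_\ell$, $w_u \in L^\infty(\Omega)$ gives a uniform $L^\infty(\Omega)$-bound on $w$, and $\|w\|_H$ is bounded (directly, since $C$ is bounded in $H$). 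Consequently the right-hand sides of the McCormick inequalities are sums of products of an $H$-function and an $L^\infty(\Omega)$-function (e.g.\ $u_\ell w$, $u w_\ell$, $u_\ell w_\ell$) and hence bounded in $H$; as $z$ lies pointwise a.e.\ between two such functions, $\|z\|_H$ is bounded, and in particular $z \in H$. Finally $u = A^{-1}(f - z)$ together with boundedness of $A^{-1} : U^* \to U$ and the continuous embedding $H \hookrightarrow U^*$ yields a uniform bound on $\|u\|_U$. Thus the feasible set is bounded in $U \times H \times H$.

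The only point requiring care is the ordering of the boundedness argument: one must first extract the $H$-bound on $u$ and the $L^\infty(\Omega)$-bound on $w$ from the box constraints alone, then use the envelope inequalities to bound $z$ in $H$, and only afterwards invoke the state equation and the bounded inverse $A^{-1}$ to upgrade to a bound on $u$ in the stronger norm of $U$; measurability and integrability of all the products that appear are already guaranteed by the standing hypotheses of \cref{sec:ocp}.
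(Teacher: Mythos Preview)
Your proposal is correct and follows essentially the same route as the paper: feasibility of $(u,w,uw)$ via the nonnegativity of the products $(u-u_\ell)(w-w_\ell)$ etc., convexity from affinity of each constraint and convexity of $C$, and boundedness by first using the box constraints to bound $u$ and $w$, then the McCormick inequalities to bound $z$ in $H$, and finally $A^{-1}$ together with $H \hookrightarrow U^*$ to upgrade to a $U$-bound on $u$. Your presentation is in fact somewhat more explicit about why each right-hand side of the McCormick inequalities lies in $H$ (product of an $H$-function with an $L^\infty$-function) than the paper's proof.
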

\begin{proof}
Regarding feasibility, the state equation, the inclusion in $C$,
and the two last pointwise inequalities are immediate.
The satisfaction of the four additional pointwise inequalities
for the choice $z = uw$ follows as in the finite-dimensional case by
rearranging them. For example, we obtain
$(u - u_\ell)w \ge (u - u_\ell) w_\ell$ a.e.\ for the first inequality.
The state equation in \eqref{eq:mcc} and the additional inequalities 
are affine in $(u,w,z)$. Moreover, $C$ is assumed to be convex
so that \eqref{eq:mcc} has a convex feasible set.
The feasible $w$ are bounded in $H$. In combination with the pointwise bounds on $u$, 
we obtain boundedness of the feasible $z$ in $H$ from the McCormick inequalities.
In turn, the continuous invertibility of $A$ and the embedding $H \cong H^* \hookrightarrow^c U^*$
imply the boundedness of the feasible $u$ in $U$.
\end{proof}
\begin{remark}\label{rem:mcc_bound_constraint}
Because PDE-constrained optimization problems generally feature a large number of variables after
discretization due to their distributed nature, the pointwise McCormick inequalities add many linear
constraints---on the order of six times the number of discretization cells---to a later
fully discretized problem if one enforces them on every degree of freedom of a finite-element
discretization of the state variable.

Without any additional constraints, we do not know whether we can actually bound $z$ uniformly in $H$, however, so we add these bound constraints on $u$, which are again satisfied by all feasible
points for \eqref{eq:ocp}.
\end{remark}
\begin{remark}
For the product $xy$ of a binary-valued variable $x$ and a fractional-valued
variable $y$ with $y_\ell \le y \le y_u$, as is present in our guiding MIPDECO example,
a envelopes arises by replacing the product with a new variable $z$ and use the so-called
big-M linearization, where $y_\ell$ and $y_u$ assume the roles of the big-Ms. This yields
the linear inequality system
\begin{align*}
y_\ell x \le z \le y_u z\quad\text{and}\quad (x - 1)y_u \le z - y \le (x - 1)y_\ell.
\end{align*}
Therein, the two bounds on $z$ correspond to the first and third inequality in the McCormick envelopes 
as introduced in \eqref{eq:mcc} and the bounds on $z - y$ correspond to the second and fourth
inequality in the McCormick envelopes so that these two approaches are equivalent for our guiding
example.
\end{remark}

\begin{proposition}\label{prp:existence}
In the setting of \cref{sec:ocp}, the problems \eqref{eq:ocp} and \eqref{eq:mcc} admit
solutions.
\end{proposition}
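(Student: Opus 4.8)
The plan is to prove existence for both problems by the direct method of the calculus of variations, handling \eqref{eq:mcc} first since its feasible set is convex, and then adapting the argument to \eqref{eq:ocp}. The main structural difficulty is that neither problem's objective is assumed convex or even weakly lower semicontinuous as a whole — only $j$ is Lipschitz on bounded sets and $\alpha\TV(\cdot)$ must be handled via its own lower semicontinuity — so I need to carefully track in which topology each term behaves well, and for \eqref{eq:ocp} I additionally need to pass to the limit in the nonlinear (bilinear) term $uw$ in the state equation.

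For \eqref{eq:mcc}: first I would observe that the problem is feasible, e.g.\ by \cref{prp:ocp_mcc_basics}(1) together with \cref{prp:existence} applied to \eqref{eq:ocp} — but to avoid circularity I should instead exhibit feasibility directly, or simply note the infimum is finite (bounded below because the feasible set is bounded in $U\times H\times H$ by \cref{prp:ocp_mcc_basics}(2) and $j$ is Lipschitz on bounded sets, while $\TV\ge 0$). Take a minimizing sequence $(u_n,w_n,z_n)$. By the boundedness from \cref{prp:ocp_mcc_basics}(2), $u_n$ is bounded in the reflexive space $U$, and $w_n,z_n$ are bounded in $H$; moreover along the minimizing sequence $\TV(w_n)$ is bounded, so $w_n$ is bounded in $\BV(\Omega)=W$. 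Hence, after passing to a subsequence, $u_n\weakto u$ in $U$ (and by compact embedding $u_n\to u$ strongly in $H$), $w_n\weakto w$ in $H$ and (since $W$ embeds compactly in $L^1$ for $d\in\{1,2\}$, or by BV-compactness) $w_n\to w$ strongly in $L^1(\Omega)$, and $z_n\weakto z$ in $H$. Then I would check that $(u,w,z)$ is feasible: the affine state equation $Au+z=f$ passes to the limit because $A$ is bounded hence weak-weak continuous; the pointwise bound constraints and the four McCormick inequalities are affine and define closed convex subsets of $H$, hence weakly closed, so they survive weak limits; $w\in C$ since $C$ is convex and closed hence weakly closed in $H$. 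Finally for optimality: $j(u_n,w_n)\to j(u,w)$ because $j$ is Lipschitz on the bounded set containing the sequence and $(u_n,w_n)\to(u,w)$ strongly in $U\times H$ (using the compact embedding to upgrade weak convergence of $u_n$ in $U$ to strong convergence in $H$, and strong $L^1$-convergence of $w_n$ together with the uniform $L^\infty$-bounds $w_\ell\le w_n\le w_u$ to get strong $H$-convergence of $w_n$), and $\alpha\TV(w)\le\liminf_n\alpha\TV(w_n)$ by weak-$*$ lower semicontinuity of the total variation. Combining, $j(u,w)+\alpha\TV(w)\le\liminf_n\big(j(u_n,w_n)+\alpha\TV(w_n)\big)=\inf\eqref{eq:mcc}$, so $(u,w,z)$ is a minimizer.

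For \eqref{eq:ocp}: I would run the same direct-method argument. Feasibility of \eqref{eq:ocp} should be assumed or established separately — the text already says "Assuming that the state equation and the original optimal control problem admit unique solutions," so I may take nonemptiness of the feasible set as given, or invoke that the problem is well-posed by hypothesis; the infimum is then finite. Take a minimizing sequence $(u_n,w_n)$; \cref{ass:mcc} gives the uniform pointwise bounds $u_\ell\le u_n\le u_u$ and $w_\ell\le w_n\le w_u$, and along the minimizing sequence $\TV(w_n)$ is bounded. I need a priori boundedness of $u_n$ in $U$: from $u_n=A^{-1}(f-u_nw_n)$, boundedness of $A^{-1}:U^*\to U$, and the fact that $u_nw_n$ is bounded in $H\hookrightarrow U^*$ (using the $L^\infty$-bounds on $w_n$ and the $L^2$-bound on $u_n$ which itself follows from the pointwise bounds $u_\ell,u_u\in H$), I get $u_n$ bounded in $U$. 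Extract subsequences as before: $u_n\weakto u$ in $U$ and $u_n\to u$ in $H$; $w_n\to w$ in $L^1$ and hence (with the $L^\infty$-bounds) in $H$; $w\in C$, and the pointwise bounds pass to the limit. The crucial new step is the nonlinear term: I claim $u_nw_n\to uw$ in $U^*$ (indeed in $H$, or at least weakly in $H$), using $u_n\to u$ strongly in $H=L^2$ and $w_n\to w$ strongly in $L^2$ with the uniform $L^\infty$-control so that the product converges strongly in $L^1$ and, being uniformly bounded in $L^2$, converges weakly in $L^2$; then $Au_n=f-u_nw_n\weakto f-uw$ in $U^*$ while $Au_n\weakto Au$ by boundedness of $A$, giving $Au+uw=f$. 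Lower semicontinuity of the objective follows exactly as above. The main obstacle — and the step I'd spend the most care on — is this passage to the limit in the bilinear term, specifically making sure the chosen topologies (strong $L^2$ for $u_n$ via compact embedding $U\hookrightarrow^c H$, strong $L^1$ for $w_n$ via BV-compactness, uniform $L^\infty$-bounds from \cref{ass:mcc}) genuinely combine to give convergence of $u_nw_n$ in a topology strong enough to identify the limit in $U^*$; everything else is routine application of weak closedness of convex sets and Lipschitz continuity of $j$ on bounded sets.
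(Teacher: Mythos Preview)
Your approach is essentially the same direct-method argument as the paper's, and your treatment of \eqref{eq:ocp} is in fact more detailed than the paper's (which dispatches that part in one sentence by invoking ``standard arguments''). There is, however, one genuine slip in your argument for \eqref{eq:mcc}: you assert $(u_n,w_n)\to(u,w)$ strongly in $U\times H$, but your parenthetical justification only delivers $u_n\to u$ strongly in $H$ (via $U\hookrightarrow^c H$), not in $U$. Since $j:U\times H\to\R$ is assumed Lipschitz with respect to the $U\times H$-norm on bounded sets, you need strong $U$-convergence of $u_n$ to conclude $j(u_n,w_n)\to j(u,w)$. The paper obtains this by a slightly different route that you have all the ingredients for: from $z_n\weakto z$ in $H$ and the compact embedding $H\hookrightarrow^c U^*$ one gets $z_n\to z$ strongly in $U^*$, whence $u_n=A^{-1}(f-z_n)\to A^{-1}(f-z)=u$ strongly in $U$ by boundedness of $A^{-1}$.

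One place where your argument and the paper's genuinely diverge: for preserving the four McCormick inequalities in the limit you invoke that affine pointwise constraints define closed convex subsets, hence weakly closed subsets, of $H^3$. This is correct and arguably cleaner. The paper instead appeals to an auxiliary result (\cref{lem:lower_bound_for_weak_limit}) that passes pointwise-a.e.\ inequalities to the limit under mixed strong $L^1$/weak $L^1$ convergence; your Mazur-type argument sidesteps the need for that lemma here.
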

\begin{proof}
The existence of solutions for \eqref{eq:ocp} follow with standard
arguments from the direct method of calculus of variations.

By construction of \eqref{eq:mcc} as a relaxation of \eqref{eq:ocp}, its
feasible set is nonempty, and there exists a minimizing sequence
$(u^n, w^n, z^n)_n \subset U \times H \times H$ of feasible points, which has at least one accumulation point 
$(u,w,z) \in H \times H \times H$ with respect to
weak convergence in $H \times H \times H$, which follows directly from
\cref{prp:ocp_mcc_basics}. Because $A^{-1}$ is a bounded, linear operator,
$u$ satisfies the state equation and is an element of $U$, too.
Moreover, because $H$ is compactly embedded in $U^*$ and thus
$z^n \to z$ holds in $U^*$, we even obtain
\[ u^n = A^{-1}(f - z^n) \to A^{-1}(f - z) = u \text{ in } U \]
and in turn $u^n \to u$ in $H$ and pointwise a.e.\ after restricting to a suitable subsubsequence.
The pointwise a.e.\ convergence yields $u_\ell \le u \le u_u$.

The assumptions on $C$ imply $w \in C$. Because $j$ is bounded below, $(\TV(w^n))_n$ is
bounded; and  again after restricting to a suitable subsubsequence,  we obtain $w^n \to w$
in $L^1(\Omega)$ and pointwise a.e.\ from the weak-$^*$ sequential compactness properties of the
$\TV$-seminorm; see Theorem 3.23 in \cite{ambrosio2000functions}. We obtain $w_\ell \le w \le w_u$.

It remains to assert that the weak limit $z$ satisfies the McCormick inequalities before
proving the optimality. This follows from \cref{lem:lower_bound_for_weak_limit}.

The continuity properties of $j$ and the lower semi-continuity of the
$\TV$-seminorm with respect to convergence in $L^1(\Omega)$ and in turn also in $H$
yield that $(u,w,z)$ is a minimizer.
\end{proof}
\begin{remark}
In our examples, the space $W$ will be (a subspace of) the space of functions of bounded 
variation $\BV(0,1)$, which is not reflexive. There are important sets $C$ that are subsets
of such spaces and also convex, bounded, and weakly sequentially closed in $H$.
An example is $\{w \in \BV(0,1) \,|\, \TV(w) \le \kappa\}$ for some $\kappa > 0$; see
also \cite{buchheim2024parabolicI} for such a constraint set in the context of integer
optimal control.
\end{remark}

The challenges of the pointwise McCormick envelopes for \eqref{eq:mcc} have already
been sketched in \cref{rem:mcc_bound_constraint}. The situation is further complicated if
one wants to improve the lower bound induced by \eqref{eq:mcc}. A straightforward application
of a bound-tightening procedure implies that one alternatingly picks $\tilde{x} \in \Omega$
and solves (one of) the optimization problems
\[ \inf_{u,w,z} u(\tilde{x}) \text{ s.t.\ } (u,w,z) \text{ is feasible for } \eqref{eq:mcc} \]
and
\[ \sup_{u,w,z} u(\tilde{x}) \text{ s.t.\ } (u,w,z) \text{ is feasible for } \eqref{eq:mcc} \]
in order to then update $u_\ell(\tilde{x})$ and $u_u(\tilde{x})$ to the computed values. Of course,
this is  sensible only if $u$  in $C(\bar{\Omega})$ or  has a meaningful pointwise interpretation.
In a practical implementation, such optimization problems may, for example, be solved for nodes 
$\tilde{x}$ of a finite-element discretization with a nodal basis.
Since improving the bounds yields further possible improvements, it makes sense to repeatedly solve
such problems until no further progress is made. In total, the bound-tightening procedure can become
computationally very expensive. Therefore, in the next section we analyze a local averaging of the
nonlinearity that leads to approximate lower bounds by means of a grid whose mesh size can then be
chosen coarser than the mesh size of the grid that is used to the discretize the state variable.

\subsection{Approximate McCormick relaxations
by local averaging}\label{sec:mccormick_h}
We propose, analyze, and assess a local averaging that modifies the
problem before introducing
the variable $z$. Thus, the variable $z$ and in a second approximation step $w$, which
only enters the relaxed PDE implicitly through $z$, and the bounds on the product $uw$ are
approximated by means of a coarse grid that allows us to reduce the size of the resulting
optimization problem. We give an overview on the approximation arguments of this subsection in
\cref{fig:approximation_arguments}.
\begin{figure}
\centering
\begin{tikzpicture}
\filldraw[black!10!white]  (2,0) rectangle (6,1);
\node[align=left] at (4,0.5) {\eqref{eq:ocp}};
\draw[->,very thick] (4, 0) -- (4, -1);
\node[align=left,anchor=west] at (4,-0.5)
{{\footnotesize lower bound} \\ 
	{\footnotesize (\cref{prp:ocp_mcc_basics})}};
\filldraw[black!10!white]  (2,-2) rectangle (6,-1);
\node[align=left] at (4,-1.5)
{\eqref{eq:mcc}};
\draw[->,very thick] (4, -2) -- (4, -3);
\node[align=left,anchor=west] at (4,-2.5)
{{\footnotesize approximation of $z$} \\ {\footnotesize fewer inequality constraints}};
\filldraw[black!10!white]  (2,-4) rectangle (6,-3);
\node[align=left] at (4,-3.5)
{\eqref{eq:mcch}};
\draw[->,very thick] (4, -4) -- (4, -5);
\node[align=left,anchor=west] at (4,-4.5)
{{\footnotesize approximation of $w$ and $\TV$} \\ {\footnotesize fewer variables}};
\filldraw[black!10!white]  (2,-6) rectangle (6,-5);
\node[align=left] at (4,-5.5)
{\eqref{eq:mcchh}};
\draw[very thick,->,bend left] (6,.5) to (9.5,-1.5) to (6,-3.5);
\node[align=left,anchor=west] 
at (9.5,-1) {{\footnotesize approx.\ lower bound} \\ {\footnotesize (\cref{thm:mcch_approx_lb_on_ocp})}};
\draw[very thick,->,bend left] (6,.5) to (9.5,-2.5) to (6,-5.5);
\node[align=left,anchor=west] 
at (9.5,-3) {{\footnotesize approx.\ lower bound} \\ {\footnotesize (\cref{thm:mcchh_approx_lb_on_ocp})}};
\draw[very thick,->,bend left] (2,-5.5) to (2,-1.5);
\node[align=left,anchor=east] 
at (1.25,-3.5) {{\footnotesize $\Gamma$-convergence} \\ {\footnotesize (\cref{prp:gamma_mcchh_to_mcc})}};
\end{tikzpicture}
\caption{Two-stage approximation of \eqref{eq:mcc} by reduced problems \eqref{eq:mcch}
and \eqref{eq:mcchh} in order to obtain approximate lower bounds
on \eqref{eq:ocp}.}\label{fig:approximation_arguments}
\end{figure}
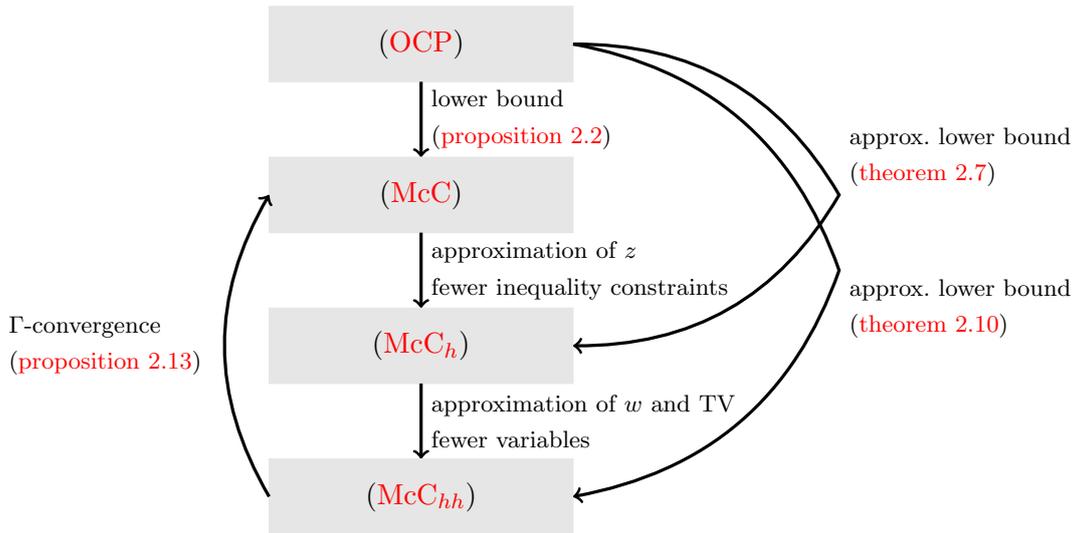

Let $\calQ_h = \{Q_h^1,\ldots,Q_h^N\}$ be a partition of the domain $\Omega$ into
intervals / squares / cubes / ... with mesh size $h$. Locally averaging the nonlinearity
of the state equation from \eqref{eq:mcc} means that $Au + uw = f$ is replaced by the variant
\begin{gather}\label{eq:pdeh}
A u + (P_h u)(P_h w) = f
\end{gather}
with $P_h$ being defined in \eqref{eq:proj_dg0}. We define the locally averaged McCormick relaxation
with respect to the partition $\calQ_h$ as
\begin{gather}\label{eq:mcch}
\begin{aligned}
\min_{\substack{u, w,\\ z_1,\ldots,z_{N_h}}}\ 
& j(u, w) + \alpha\TV(w)\\
\text{ s.t.\ }\ \ 
& A u + z = f \enskip\text{ in } U^*,  \\
&z = \sum_{i=1}^{N_h} z_i \chi_{Q^i_h},\\
&z_i \ge u_\ell^i (P_h w) + (P_h u) w_\ell^i - u_\ell^i w_\ell^i
\enskip & \text{ on }Q^i_h\text{ for all } i \in \{1,\ldots,N_h\},\\ 
&z_i \ge u_u^i (P_h w) + (P_h u) w_u^i - u_u^i w_u^i
\enskip & \text{ on }Q^i_h\text{ for all } i \in \{1,\ldots,N_h\},\\ 
&z_i \le u_u^i (P_h w) + (P_h u) w_\ell^i - u_u^i w_\ell^i
\enskip & \text{ on }Q^i_h\text{ for all } i \in \{1,\ldots,N_h\},\\ 
&z_i \le u_\ell^i (P_h w) + (P_h u) w_u^i - u_\ell^i w_u^i
\enskip & \text{ on }Q^i_h\text{ for all } i \in \{1,\ldots,N_h\},\\ 
&u \in U, u_\ell^i \le P_h u \le u_u^i
\enskip & \text{ on }Q^i_h\text{ for all } i \in \{1,\ldots,N_h\},\\
&w \in C,\; w_\ell \le w \le w_u\enskip\text{a.e.},\\
&z_1,\ldots,z_{N_h} \in \R.
\end{aligned}\tag{McC$_h$}
\end{gather}
Note that \eqref{eq:mcch} is still in function space regarding $u$ and $w$.
In \eqref{eq:mcch} the statement that the four McCormick inequalities on $z_i$  hold on $Q_h^i$ 
might be misunderstood. We therefore highlight that $P_h u$ and $P_h w$ are constant on the cells 
$Q_h^i$ so that there are exactly four McCormick inequalities per cell $Q_i^h \in \calQ_h$.
The same argument applies for the bounds on $P_h u$. The bounds $u_\ell^i \in \R$ and $u_u^i \in \R$ 
assume the roles of the functions $u_\ell$ and $u_u$ per grid cell for the locally averaged function 
$P_h u$. Specifically, the function $P_h u$
is bounded from below by $u_\ell^i$ and from above by $u_u^i$ on the $i$-th grid cell
$Q^i_h$.
Similarly, the bounds $w_\ell^i$ and $w_u^i$ assume the roles of $w_\ell$ and $w_u$.

By employing only finitely many bounds on $P_h u$, we can circumvent the aforementioned regularity
problems if the number of constraints is not too large; that is, its Lagrange multiplier may 
sensibly be interpreted as a vector in $\R^n$. Moreover, the local averaging opens up
possibilities for adaptive
McCormick relaxations when using a corresponding adaptive refinement of the partition $\calQ_h$
and will also reduce the computational effort for improving the (approximate) lower bound. We can now
impose inequalities on the locally averaged state $P_h u$ and tighten them algorithmically in order to
reduce the gap between the relaxation and the original problem. We now impose an assumption that implies
that \eqref{eq:mcch} admits a solution and is an approximate relaxation of \eqref{eq:ocp}.
This means that the optimal objective of \eqref{eq:ocp} is bounded from below by the
objective of \eqref{eq:mcch} minus a bound on the approximation error.

\begin{assumption}\label{ass:mcch}
In addition to the setting from \cref{sec:ocp}, we assume the following.
\begin{enumerate}
\item\label{itm:partitions}
Let $\{\calQ_h\}_h$ be a sequence of partitions of the domain $\Omega$ with mesh sizes $h$.
\item\label{itm:phwphu_pdesolexists}
Let the equation \eqref{eq:pdeh} have a unique solution for all $w \in \BV(\Omega)$ with $w(x) \in [w_\ell,w_u]$ a.e.
\item\label{itm:richness_of_feasible_set} For all $h$ and all $Q_i \in \calQ_h$,
let the bounds $u_\ell^i$, $u_u^i$, $w_\ell^i$, $w_u^i \in \R$ be such that
$u_\ell^i \le (P_h u)|_{Q_i} \le u_u^{i}$
and $w_\ell^i \le (P_h w)|_{Q_i} \le w_u^i$ hold for all $w \in C \cap [w_\ell, w_u]$ and $u \in U$ that solve
$Au + (P_h u) (P_h w) = f$.
\end{enumerate}	
\end{assumption}
We will verify all parts of \cref{ass:mcch} for an example in 
\cref{sec:elliptic_example}.
\begin{theorem}\label{thm:mcch_approx_lb_on_ocp}
Let \cref{ass:mcch} hold. Then \eqref{eq:mcch} has a convex feasible set that is bounded in
$U \times H \times \R^{N_h}$ and admits a minimizer.
Let $(u, w)$ be feasible for \eqref{eq:ocp}. Then the tuple $(u_h,w_h,z_h)$  with
$w_h \coloneqq w$, $u_h$ being the unique solution to \eqref{eq:pdeh}, and
$z_h \coloneqq (P_h u_h) (P_h w_h)$ is feasible for \eqref{eq:mcch}, and the objective
value satisfies
\begin{gather}\label{eq:uhwh_uw_h_approx}
|j(u_h, w_h) + \alpha \TV(w_h) - j(u, w) - \alpha \TV(w)|
\le L_u \|u - u_h\|_U,
\end{gather}
where $L_u$ is the Lipschitz constant of $j$ with respect to its first argument on the feasible set of
\eqref{eq:mcch}. Moreover,
\begin{gather}\label{eq:mcch_approx_lb_ocp}
m_{\eqref{eq:ocp}} \ge m_{\eqref{eq:mcch}} - L_u \|\bar{u} - \bar{u}_h\|_U
\end{gather}
if $m_{\eqref{eq:mcch}}$ denotes the infimum of \eqref{eq:mcch},
$m_{\eqref{eq:ocp}}$ denotes the infimum of \eqref{eq:ocp},
$(\bar{u}, \bar{w})$ denotes the minimizer of \eqref{eq:ocp},
and $\bar{u}_h$ the corresponding solution to \eqref{eq:pdeh}.
\end{theorem}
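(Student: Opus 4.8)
The plan is to prove the three assertions of \cref{thm:mcch_approx_lb_on_ocp} in order: (i) the structural properties of the feasible set of \eqref{eq:mcch} together with existence of a minimizer, (ii) feasibility of the constructed triple $(u_h, w_h, z_h)$ together with the objective estimate \eqref{eq:uhwh_uw_h_approx}, and (iii) the lower-bound estimate \eqref{eq:mcch_approx_lb_ocp} as a corollary of (i) and (ii).

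\textbf{Step 1 (convexity, boundedness, existence).}
First I would observe that all the constraints in \eqref{eq:mcch} are affine in $(u, w, z_1, \dots, z_{N_h})$: the state equation $Au + z = f$ is affine since $A$ is linear and $z = \sum_i z_i \chi_{Q_h^i}$ depends linearly on the $z_i$; the four McCormick inequalities per cell are affine in $(P_h u, P_h w, z_i)$ and $P_h$ is linear; the bound constraints on $P_h u$ are affine; and $C$ is convex by assumption. Hence the feasible set is convex. For boundedness, the feasible $w$ are bounded in $H$ because $C$ is bounded; the bounds $u_\ell^i \le P_h u \le u_u^i$ together with the (finite) partition give $\|P_h u\|_{L^\infty}$ bounded, hence $\|P_h u\|_H$ bounded; then the McCormick inequalities bound each $z_i$ in terms of $\|P_h u\|_{L^\infty}$, $\|P_h w\|_{L^\infty}$ and the data bounds, so $z$ is bounded in $H$; finally $u = A^{-1}(f - z)$ and continuity of $A^{-1} : U^* \to U$ (with $H \hookrightarrow U^*$) give boundedness of $u$ in $U$. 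Existence of a minimizer then follows by the direct method exactly as in the proof of \cref{prp:existence}: take a minimizing sequence, extract weak accumulation points $(u, w, z_1, \dots, z_{N_h})$; pass to the limit in the affine constraints (the $z_i$ live in finite-dimensional $\R^{N_h}$, so convergence is strong there; $P_h$ is weak-to-weak continuous and, on the finite partition, even continuous into $L^\infty$-weak-$*$); use $u^n = A^{-1}(f - z^n) \to A^{-1}(f - z)$ strongly in $U$ from compactness of $H \hookrightarrow U^*$; use weak-$*$ compactness of the $\TV$-ball for $w^n \to w$ in $L^1$ and a.e.; and conclude with lower semicontinuity of $\TV$ and continuity of $j$.

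\textbf{Step 2 (feasibility of $(u_h, w_h, z_h)$ and the objective estimate).}
With $w_h = w$, \cref{ass:mcch}\ref{itm:phwphu_pdesolexists} guarantees that $u_h$ solving \eqref{eq:pdeh} exists and is unique; set $z_h = (P_h u_h)(P_h w_h)$. Then $z_h$ is piecewise constant on $\calQ_h$ with cell values $z_{h,i} = (P_h u_h)|_{Q_i}(P_h w_h)|_{Q_i}$, and $A u_h + z_h = f$ holds by construction, so the state equation and the representation constraint are met. The four McCormick inequalities on each cell reduce, after rearrangement, to inequalities like $((P_h u_h)|_{Q_i} - u_\ell^i)((P_h w_h)|_{Q_i} - w_\ell^i) \ge 0$, etc., which hold because \cref{ass:mcch}\ref{itm:richness_of_feasible_set} gives $u_\ell^i \le (P_h u_h)|_{Q_i} \le u_u^i$ and $w_\ell^i \le (P_h w_h)|_{Q_i} \le w_u^i$; this is precisely the finite-dimensional McCormick argument used in \cref{prp:ocp_mcc_basics}. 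The same assumption gives the bound constraints on $P_h u_h$; and $w_h = w \in C \cap [w_\ell, w_u]$ gives the remaining constraints. For the objective estimate, since $w_h = w$ the $\TV$ terms cancel exactly, so the left-hand side of \eqref{eq:uhwh_uw_h_approx} equals $|j(u_h, w) - j(u, w)|$, which is at most $L_u \|u_h - u\|_U$ by Lipschitz continuity of $j$ in its first argument on the (bounded) feasible set of \eqref{eq:mcch} — here one checks that both $(u, w, \cdot)$ (the original feasible point with its induced $z$) and $(u_h, w, z_h)$ lie in that feasible set, or at least in a common bounded set on which the Lipschitz constant $L_u$ is valid.

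\textbf{Step 3 (the lower bound) and the main obstacle.}
Apply Step 2 to the minimizer $(\bar u, \bar w)$ of \eqref{eq:ocp}: the constructed triple $(\bar u_h, \bar w, \bar z_h)$ is feasible for \eqref{eq:mcch}, so $m_{\eqref{eq:mcch}} \le j(\bar u_h, \bar w) + \alpha \TV(\bar w) \le j(\bar u, \bar w) + \alpha \TV(\bar w) + L_u\|\bar u - \bar u_h\|_U = m_{\eqref{eq:ocp}} + L_u\|\bar u - \bar u_h\|_U$, which rearranges to \eqref{eq:mcch_approx_lb_ocp}. The step I expect to require the most care is the bookkeeping around the Lipschitz constant $L_u$ in Step 2: one must be sure that the feasible set of \eqref{eq:mcch} (or a fixed bounded superset independent of $h$, on which $L_u$ is taken) simultaneously contains the image of every \eqref{eq:ocp}-feasible point under $(u, w) \mapsto (u, w, uw)$ and the corresponding averaged triple $(u_h, w, (P_h u_h)(P_h w))$ — this is where \cref{ass:mcch}\ref{itm:richness_of_feasible_set} (consistency of the per-cell bounds with the actual averaged solutions) does the real work, and where one should double-check that the original $(u, w, uw)$ is indeed feasible for \eqref{eq:mcch} and not merely for \eqref{eq:mcc} (it is, because $P_h u = u$ and $P_h w = w$ fail in general, so one instead verifies the McCormick inequalities directly from the pointwise bounds on $u$, $w$ after averaging — strictly, it is cleanest to only claim that both triples lie in a common bounded set, on which $j(\cdot, w)$ is Lipschitz, rather than that the unaveraged triple is \eqref{eq:mcch}-feasible). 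A second, more routine subtlety is verifying that $P_h$ maps the relevant bounded sets in $H$ into bounded sets in $L^\infty$, which is immediate on a fixed finite partition but must be stated.
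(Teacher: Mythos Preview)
Your proposal is correct and follows essentially the same approach as the paper: convexity and boundedness as in \cref{prp:ocp_mcc_basics}, existence via the direct method with the finite-dimensional $z$-ansatz giving strong convergence of $z$, feasibility of $(u_h,w_h,z_h)$ from \cref{ass:mcch}\ref{itm:phwphu_pdesolexists}--\ref{itm:richness_of_feasible_set} and the factored McCormick inequalities, and the lower bound by chaining the Lipschitz estimate with optimality. Your caution about $L_u$ is well placed and in fact more careful than the paper's own argument: the original state $u$ need not be \eqref{eq:mcch}-feasible, so the clean route---as you note---is to invoke Lipschitz continuity of $j$ on a common bounded set containing both $u$ and $u_h$, which exists because $j$ is Lipschitz on bounded sets and both states satisfy uniform a priori bounds.
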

\begin{proof}
The convexity and boundedness of the feasible set follow as in \cref{prp:ocp_mcc_basics}.
The feasibility of  $(u_h,w_h,z_h)$ for \eqref{eq:mcch} follows by construction.
Because the feasible set of \eqref{eq:mcc} is nonempty (see \cref{prp:existence}),
there exist feasible points of \eqref{eq:mcch}.
Because the feasible set of \eqref{eq:mcch} is bounded, the desired constant $L_u$ exists
by assumption on $j$.

Consequently, we can consider a minimizing sequence $\{(u_h^k,w_h^k,z_h^k)\}_k$
for \eqref{eq:mcch} and apply similar arguments as in the proof of 
\cref{prp:existence} in order to prove existence of a minimizer.
The $z_h^k$ have a finite-dimensional piecewise constant ansatz and are bounded because of
the boundedness of $w$ implied by the properties of $C$ so that they
admit a \mbox{(norm-)}convergent subsequence in $H$ with limit $\bar{z}_h$. Because of
the continuous invertibility of $A$, there is a corresponding subsequence of 
$\{u_h^k\}_k$ that converges to $\bar{u}_h = A^{-1}(-\bar{z}_h + f)$. By
possibly passing to a further subsequence, the corresponding subsequence
of $\{w_h^k\}_k$ converges weakly$^*$ to $\bar{w}_h$ in $W$.
This implies $\bar{w}_h \in C$ and $w_\ell \le \bar{w}_h \le w_u$.
Passing to a further subsequence, we obtain pointwise a.e.\ convergence for
all three subsequences and in turn feasibility of the limit triple
$(\bar{u}_h, \bar{w}_h, \bar{z}_h)$ for \eqref{eq:mcch}, where we use that the 
projection to the piecewise constant functions is continuous with respect to all 
$L^p$-norms.
For the objective, we obtain that the first term converges because of its assumed
continuity properties. Moreover, the $\TV$-term is lower semicontinuous
with respect to convergence in $H$; and, in turn, we obtain that 
$(\bar{u}_h, \bar{w}_h, \bar{z}_h)$ realizes the infimal objective value and
is thus a minimizer.

The approximation bound \eqref{eq:uhwh_uw_h_approx} follows from the Lipschitz
continuity of $j$ and the assumed choice $(u_h,w_h,z_h)$.

To prove the approximate lower bound \eqref{eq:mcch_approx_lb_ocp},
let $(\bar{u},\bar{w})$ be a minimizer to \eqref{eq:ocp}, and let
$\bar{w}_h = \bar{w}$. Then \cref{ass:mcch} \ref{itm:phwphu_pdesolexists} gives a unique solution
$\bar{u}_h$ to \eqref{eq:pdeh}, and we choose $\bar{z}_h = (P_h u_h)(P_h w)$.
\Cref{ass:mcch} \ref{itm:richness_of_feasible_set} yields that the triple 
$(\bar{u}_h, \bar{w}_h, \bar{z}_h)$ is feasible for \eqref{eq:mcch}.
We deduce by means of $\bar{w}_h = \bar{w}$ and the optimality
of $(\bar{u},\bar{w},\bar{z})$ that
\begin{align*}
m_{\eqref{eq:ocp}} = j(\bar{u},\bar{w}) + \alpha \TV(\bar{w})
&\ge j(\bar{u}_h,\bar{w}_h) + \alpha \TV(\bar{w}_h) - L_u \|u - u_h\|_U\\
&\ge m_{\eqref{eq:mcch}} - L_u \|u - u_h\|_U.
\end{align*}
\end{proof}
We note that approximation results on the difference $\|u - u_h\|_U$ that are required
to obtain a meaningful bound can generally be obtained for broad classes of elliptic and
parabolic PDEs.

The locally averaged McCormick relaxation \eqref{eq:mcch} has the drawback
that while only a local average of $w$ is required for the McCormick inequalities, the full information is required in order to represent the total 
variation term correctly. To reduce the
number of optimization variables further, one is tempted to replace $\TV(w)$ by
$\TV(P_h w)$. Then one could use $P_h w = \sum_{i=1}^{N_h} w_i \chi_{Q^i_h}$ for real
coefficients $w_i$, $i \in \{1,\ldots,N_h\}$, in order to replace the 
optimization over $w$ in $\BV(\Omega)$ by an optimization over $\R^N$.
The lower semicontinuity inequality
$\TV(w) \le \liminf_{h\searrow 0}\TV(P_h w)$ is strict in general if
the dimension of $\Omega$ is larger than one. To make things worse,
even a $\Gamma$-convergence result cannot be obtained directly because the 
geometries of the functions in the limit $h \searrow 0$ are restricted
due to the restrictions of the geometry of the elements of
the partitions $\calQ_h$. This is noted at the end of section 1 in \cite{chambolle2021approximating}
with references to \cite{hochbaum2001efficient} and \cite{chambolle2012parametric} that employ
discrete (anisotropic) approximations of the total variation. Specifically, the isotropic unit ball
that is due to the Euclidean norm in the definition of the total variation is not recovered in
the limit process for $h \searrow 0$, and other limit functionals that have nonisotropic unit balls
may arise for periodic discretizations; this situation is analyzed and reported in \cite{cristinelli2023conditional}.

We introduce a variant of the McCormick relaxation where both the McCormick inequalities and the control discretization employ a local averaging with respect to the same grid based on the following assumption. To be able to obtain the desired approximation bound and a $\Gamma$-convergence 
to the limit problem \eqref{eq:mcc}, we assume that a regularization of
the total variation penalty, such as the $L^2$-regularization of the dual 
formulation presented in \cite{chambolle2017accelerated}, is coupled to
the grid used for the local averaging.
\begin{assumption}\label{ass:mcchh}
In addition to \cref{ass:mcch}, we assume that there is a sequence of approximations $(\TVh)_h$ of $\TV$
that is consistent with local
averaging. Specifically, we assume for all $h$ that
$\TVh : L^1(\Omega) \to [0,\infty]$ is lower semicontinuous,
$\TVh \circ P_h \le \TV$, and $\TVh \circ P_h \le \TVh$.
\end{assumption}
\begin{remark}
In one-dimensional domains, the choice $\TV = \TVh$ immediately satisfies \cref{ass:mcchh}, which
follows from \cref{lem:1DTVPh_gamma} below. On multidimensional domains, one can, for example,
choose suitable finite-element discretizations of the total variation seminorm like a
approximation with lowest-order Raviart--Thomas elements; see \S4.1 in \cite{chambolle2021approximating}.
Note that a correct discretization of the $\TV$
seminorm is not trivial in our intended context of mixed-integer
PDE-constrained and we refer to the recent article
\cite{schiemann2024discretization} for details on the
difficulties and a solution based on the aforementioned
lowest-order Raviart--Thomas elements.
\end{remark}
The variant of the McCormick relaxation that replaces $\TV(w)$ by $\TVh(w)$ is:
\begin{gather}\label{eq:mcchh}
\begin{aligned}
\min_{\substack{u, w_1,\ldots,w_{N_h},\\
		z_1,\ldots,z_{N_h}}}\ & j(u, w) + \alpha\TVh(w)\\
\text{ s.t.\ }\ \ \ \
& A u + z = f \enskip\text{ in } U^*,  \\
& w = \sum_{i=1}^{N_h} w_i \chi_{Q^i_h}, z = \sum_{i=1}^{N_h} z_i \chi_{Q^i_h},\\
&z_i \ge u_\ell^i w_i + (P_h u) w_\ell^i - u_\ell^i w_\ell^i
\enskip & \text{ on }Q^i_h\text{ for all } i \in \{1,\ldots,N_h\},\\ 
&z_i \ge u_u^i w_i + (P_h u) w_u^i - u_u^i w_u^i
\enskip & \text{ on }Q^i_h\text{ for all } i \in \{1,\ldots,N_h\},\\ 
&z_i \le u_u^i w_i + (P_h u) w_\ell^i - u_u^i w_\ell^i
\enskip & \text{ on }Q^i_h\text{ for all } i \in \{1,\ldots,N_h\},\\ 
&z_i \le u_\ell^i w_i + (P_h u) w_u^i - u_\ell^i w_u^i
\enskip & \text{ on }Q^i_h\text{ for all } i \in \{1,\ldots,N_h\},\\ 
&u \in U,\; u_\ell^i \le P_h u \le u_u^i &  \text{ on }Q^i_h\text{ for all } i \in \{1,\ldots,N_h\},\\
&w \in C,\; w_\ell \le w \le w_u \text{ a.e.},\\
&z_1,\ldots,z_{N_h} \in \R
\end{aligned}\tag{McC$_{hh}$}
\end{gather}
Note that \eqref{eq:mcchh} is still in function space regarding $u$.
With the help of \cref{ass:mcchh}, we can show that \eqref{eq:mcchh}
is an approximate lower bound on \eqref{eq:mcch} and in turn on \eqref{eq:ocp}.
We note that the inequalities in \cref{ass:mcchh} are implementable in practice and can be relaxed to approximate
inequalities  with an $h$-dependent error that tends to zero for $h \searrow 0$.
\begin{theorem}\label{thm:mcchh_approx_lb_on_ocp}
Let \cref{ass:mcchh} hold. Then \eqref{eq:mcchh} has a convex feasible set 
that is bounded in $U \times \R^{N_h} \times \R^{N_h}$ and admits a minimizer.
We define $w_h \coloneqq \sum_{i=1}^{N_h} w_i^h \chi_{Q^i_h}$ for
a feasible point $(u_{hh} ,w_h^1,\ldots,w_h^{N_h},z_h^{1},\ldots,z_h^{N_h})$
of \eqref{eq:mcchh}. 
Then the tuple $(u_{hh}, w_h, z_h^1,\ldots,z_h^{N_h})$ is feasible for \eqref{eq:mcch}.

Let $(u_{h}, w_h, z_h^{1},\ldots,z_h^{N_h})$ be feasible for \eqref{eq:mcch}. Then
$(u_{h}, w_h^1,\ldots,w_h^{N_h},z_h^{1},\ldots,z_h^{N_h})$
with $w_h^i \coloneqq (P_h w_h)|_{Q_h^i}$,
$i \in \{1,\ldots,N_h\}$, is feasible for \eqref{eq:mcchh}. Moreover,
\begin{gather}\label{eq:mcchh_approx_lb} 
j(\bar{u}_h, w_h) + \alpha \TV(w_h) \ge m_{\eqref{eq:mcchh}} 
- L_w \|w_u - w_\ell\|_{L^\infty}^{\frac{1}{2}} \sqrt{d}^\frac{1}{2} \TV(\bar{w}_h)^\frac{1}{2} h^\frac{1}{2},
\end{gather}
where $m_{\eqref{eq:mcchh}}$ is the infimum of \eqref{eq:mcchh}
and $\bar{w}_h$ is a minimizer of \eqref{eq:mcch},
$\bar{u}_h$ the corresponding solution to \eqref{eq:pdeh},
and $L_w$ is the Lipschitz
constant of $j$ with respect to its second argument on $C$.

Moreover, 
\begin{gather}\label{eq:mcchh_approx_lb_ocp}
m_{\eqref{eq:ocp}} \ge m_{\eqref{eq:mcchh}} - L_u \|\bar{u} - \bar{u}_h\|_U
- L_w \|w_u - w_\ell\|_{L^\infty}^{\frac{1}{2}} \sqrt{d}^\frac{1}{2}
  \TV(\bar{w})^\frac{1}{2} h^\frac{1}{2},
\end{gather}
where $m_{\eqref{eq:ocp}}$ is the infimum of \eqref{eq:ocp},
$(\bar{u},\bar{w})$ is a minimizer of \eqref{eq:ocp},
and $L_u$ is the Lipschitz constant of $j$ with respect to its first argument
on the feasible set of \eqref{eq:mcchh}.
\end{theorem}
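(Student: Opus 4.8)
The statement packages together four things: existence of a minimizer of \eqref{eq:mcchh}; the two feasibility correspondences between \eqref{eq:mcch} and \eqref{eq:mcchh}; and the approximation bounds \eqref{eq:mcchh_approx_lb} and \eqref{eq:mcchh_approx_lb_ocp}. The plan is to dispatch existence and the correspondences by the bookkeeping already used in \cref{prp:ocp_mcc_basics} and \cref{thm:mcch_approx_lb_on_ocp}, and to reduce both quantitative bounds to a single estimate on $\|\bar w_h - P_h\bar w_h\|_{L^2}$. For existence, in the variables $(u,w_1,\dots,w_{N_h},z_1,\dots,z_{N_h}) \in U\times\R^{N_h}\times\R^{N_h}$ the feasible set of \eqref{eq:mcchh} is cut out by the affine state equation, the affine McCormick inequalities, the affine cell bounds $u_\ell^i\le P_h u\le u_u^i$ (recall $P_h u$ is linear in $u$) and $w_\ell\le\sum_i w_i\chi_{Q_h^i}\le w_u$, and the convex constraint $\sum_i w_i\chi_{Q_h^i}\in C$, so it is convex; the cell bounds on the $w_i$ and on $P_h u$ propagate through the McCormick inequalities to boundedness of the $z_i$, and then of $u=A^{-1}(f-\sum_i z_i\chi_{Q_h^i})$ in $U$ by continuous invertibility of $A$, so the feasible set is bounded. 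For a minimizer, take a minimizing sequence; the finitely many real coordinates $w_i^k,z_i^k$ are bounded, hence converge along a subsequence, forcing $z^k\to\bar z$ in $H$, $u^k=A^{-1}(f-z^k)\to\bar u$ in $U$, and $P_h u^k\to P_h\bar u$ cellwise; the (closed, convex) constraints pass to the limit using closedness of $C$, and continuity of $j$ together with the lower semicontinuity of $\TVh$ from \cref{ass:mcchh} shows the limit attains $m_{\eqref{eq:mcchh}}$.

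For the two correspondences, observe that a $w$ feasible for \eqref{eq:mcchh} is $\calQ_h$-piecewise constant, so $P_h w=w$; hence $(P_h w)|_{Q_h^i}$ equals the coordinate $w_i$, and the McCormick inequalities, the $P_h u$-bounds, the state equation, and $w\in C\cap[w_\ell,w_u]$ are literally the constraints of \eqref{eq:mcch}, giving feasibility of $(u_{hh},w_h,z_h^1,\dots,z_h^{N_h})$ for \eqref{eq:mcch}. Conversely, for $(u_h,w_h,z_h^1,\dots,z_h^{N_h})$ feasible for \eqref{eq:mcch}, the choice $w_h^i\coloneqq(P_h w_h)|_{Q_h^i}$ makes $\sum_i w_h^i\chi_{Q_h^i}=P_h w_h$; the McCormick inequalities and the $P_h u$-bounds of \eqref{eq:mcchh} coincide verbatim with those of \eqref{eq:mcch}, the averaging $P_h$ preserves the pointwise bounds so $w_\ell\le P_h w_h\le w_u$, and $P_h w_h\in C$ because the admissible sets of interest (those given by pointwise bounds and/or a total-variation bound) are mapped into themselves by $P_h$; moreover $\TVh(P_h w_h)\le\TV(w_h)<\infty$ by \cref{ass:mcchh}. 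Hence $(u_h,w_h^1,\dots,w_h^{N_h},z_h^1,\dots,z_h^{N_h})$ is feasible for \eqref{eq:mcchh}. In particular \eqref{eq:mcchh} has nonempty feasible set, since \eqref{eq:mcch} does by \cref{thm:mcch_approx_lb_on_ocp}.

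For \eqref{eq:mcchh_approx_lb}, let $\bar w_h$ be the $w$-component of a minimizer of \eqref{eq:mcch} and let $\bar u_h$ solve \eqref{eq:pdeh} for $w=\bar w_h$; since $\bar w_h\in C\cap[w_\ell,w_u]$, \cref{ass:mcch} \ref{itm:richness_of_feasible_set} applies and, after the same rearrangement of the McCormick inequalities as in \cref{prp:ocp_mcc_basics}, the triple consisting of $\bar u_h$, $\bar w_h$, and $z_i\coloneqq(P_h\bar u_h)|_{Q_h^i}(P_h\bar w_h)|_{Q_h^i}$ is feasible for \eqref{eq:mcch}. Transferring it to \eqref{eq:mcchh} as above produces a feasible point of \eqref{eq:mcchh} with $w$-component $P_h\bar w_h$, so $j(\bar u_h,P_h\bar w_h)+\alpha\TVh(P_h\bar w_h)\ge m_{\eqref{eq:mcchh}}$. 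Using $\TVh\circ P_h\le\TV$ and the Lipschitz continuity of $j$ in its second argument on $C$,
\begin{align*}
j(\bar u_h,\bar w_h)+\alpha\TV(\bar w_h)
&\ge j(\bar u_h,P_h\bar w_h)-L_w\|\bar w_h-P_h\bar w_h\|_{L^2}+\alpha\TVh(P_h\bar w_h)\\
&\ge m_{\eqref{eq:mcchh}}-L_w\|\bar w_h-P_h\bar w_h\|_{L^2}.
\end{align*}
Since $\bar w_h$ and $P_h\bar w_h$ both lie pointwise a.e.\ in $[w_\ell,w_u]$, one has $\|\bar w_h-P_h\bar w_h\|_{L^\infty}\le\|w_u-w_\ell\|_{L^\infty}$, and combining this with the standard $\BV$ averaging estimate, which in the present normalization reads $\|v-P_h v\|_{L^1(\Omega)}\le\sqrt d\,h\,\TV(v)$ (see, e.g., \cite{ambrosio2000functions}), and the interpolation inequality $\|g\|_{L^2}^2\le\|g\|_{L^\infty}\|g\|_{L^1}$ gives $\|\bar w_h-P_h\bar w_h\|_{L^2}\le\|w_u-w_\ell\|_{L^\infty}^{1/2}\sqrt{d}^{1/2}\TV(\bar w_h)^{1/2}h^{1/2}$, i.e.\ \eqref{eq:mcchh_approx_lb}.

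For \eqref{eq:mcchh_approx_lb_ocp}, let $(\bar u,\bar w)$ be a minimizer of \eqref{eq:ocp} and let $\bar u_h$ solve \eqref{eq:pdeh} for $w=\bar w$; exactly as in the proof of \cref{thm:mcch_approx_lb_on_ocp}, $\bar u_h$, $\bar w$, and $z_i=(P_h\bar u_h)|_{Q_h^i}(P_h\bar w)|_{Q_h^i}$ form a feasible point of \eqref{eq:mcch}, and rerunning the previous paragraph with this point in place of the minimizer of \eqref{eq:mcch} yields $j(\bar u_h,\bar w)+\alpha\TV(\bar w)\ge m_{\eqref{eq:mcchh}}-L_w\|w_u-w_\ell\|_{L^\infty}^{1/2}\sqrt{d}^{1/2}\TV(\bar w)^{1/2}h^{1/2}$; combined with $m_{\eqref{eq:ocp}}=j(\bar u,\bar w)+\alpha\TV(\bar w)\ge j(\bar u_h,\bar w)+\alpha\TV(\bar w)-L_u\|\bar u-\bar u_h\|_U$ (Lipschitz continuity of $j$ in its first argument on the feasible set of \eqref{eq:mcchh}), this is \eqref{eq:mcchh_approx_lb_ocp}. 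The one genuinely quantitative step is the $L^2$ bound on $\bar w_h-P_h\bar w_h$ through the $L^\infty$--$L^1$ interpolation and the $\BV$ averaging estimate; everything else parallels \cref{prp:ocp_mcc_basics} and \cref{thm:mcch_approx_lb_on_ocp}. The point that deserves attention is the invariance $P_h C\subseteq C$, which is what makes the correspondence \eqref{eq:mcch}$\to$\eqref{eq:mcchh} and the comparison of $j$ on $C$ legitimate; it holds for the bound- and total-variation-constrained admissible sets relevant here but not for an arbitrary closed convex $C$.
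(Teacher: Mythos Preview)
Your proof is correct and follows essentially the same route as the paper's: convexity, boundedness, and existence are handled by the arguments of \cref{prp:ocp_mcc_basics} and \cref{thm:mcch_approx_lb_on_ocp}, and the quantitative bounds \eqref{eq:mcchh_approx_lb}--\eqref{eq:mcchh_approx_lb_ocp} are obtained via $\TVh\circ P_h\le\TV$, Lipschitz continuity of $j$, the interpolation $\|g\|_{L^2}^2\le\|g\|_{L^\infty}\|g\|_{L^1}$, and the $\BV$ averaging estimate (the paper cites (12.24) in Theorem~12.26 of \cite{maggi2012sets} rather than \cite{ambrosio2000functions}). Your closing remark that the correspondence from \eqref{eq:mcch} to \eqref{eq:mcchh} tacitly requires $P_h C\subseteq C$ is a fair point that the paper subsumes under ``the assumed properties of $C$'' without making explicit.
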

\begin{proof}
The convexity and boundedness of the feasible set follow as in \cref{prp:ocp_mcc_basics}.
The respective feasibility relations follow by construction, the definitions of \eqref{eq:mcch}
and \eqref{eq:mcchh}, and the assumed properties of $C$. The existence of minimizers
for \eqref{eq:mcchh} follows with the same arguments as for \eqref{eq:mcch}
with the only difference being that the existence of a convergent subsequence in $H$
of the control inputs $w$ as part of a minimizing sequence can already be deduced from
the finite-dimensional ansatz.
Because the feasible set of \eqref{eq:mcch} is bounded, the desired constant $L_u$ exists.
To prove the approximate lower bound \eqref{eq:mcchh_approx_lb}, we deduce that, for all
$(u_{h}, w_h,z_{1}^h,\ldots,z_{N_h}^h)$ that are feasible for \eqref{eq:mcch}, the estimates
\begin{align*}
m_{\eqref{eq:mcchh}} &\le 
 j(u_h, P_h w_h) + \alpha \TVh(P_h w_h) \\
 &\le  j(u_h, w_h) + \alpha \TVh(P_h w_h) + L_w \|w_h - P_h w_h\|_{H} \\
   &\le j(u_h, w_h) + \alpha \TV(w_h) + L_w \|w_u - w_\ell\|_{L^\infty}^{\frac{1}{2}} \|w_h - P_h w_h\|_{L^1(\Omega)}^\frac{1}{2} \\
   &\le j(u_h, w_h) + \alpha \TV(w_h) 
     + L_w \|w_u - w_\ell\|_{L^\infty}^{\frac{1}{2}} \sqrt{d}^\frac{1}{2} \TV(w_h)^\frac{1}{2} h^\frac{1}{2},
\end{align*}
hold, where we have used \cref{ass:mcchh} and the Lipschitz continuity
of $j$ with respect to the second argument for the second inequality and (the proof of)
(12.24) in Theorem 12.26 in \cite{maggi2012sets} for the third inequality.

To prove the approximate lower bound \eqref{eq:mcchh_approx_lb_ocp}, we chain this
estimate with the arguments from \cref{thm:mcch_approx_lb_on_ocp}. Let $(\bar{u},\bar{w})$
be a minimizer to \eqref{eq:ocp}, and let $\bar{w}_{h} = \bar{w}$.
Then \cref{ass:mcch} \ref{itm:phwphu_pdesolexists} gives a unique solution
$\bar{u}_h$ to \eqref{eq:pdeh}, and we choose $\bar{z}_h = (P_h u_h)(P_h \bar{w})$.
\Cref{ass:mcch} \ref{itm:richness_of_feasible_set} gives the required feasibility,
and using $P_h\bar{w}$ as well as $(\bar{u}_h,P_h \bar{w},\bar{z}_h^1,\ldots,\bar{z}_h^{N_h})$
in the estimate above gives
\begin{align}
m_{\eqref{eq:mcchh}}
&\le j(\bar{u}_h, \bar{w}) + \alpha \TV(\bar{w}) 
+ L_w \|w_u - w_\ell\|_{L^\infty}^{\frac{1}{2}}\sqrt{d}^\frac{1}{2} \TV(\bar{w})^\frac{1}{2} h^\frac{1}{2}
\nonumber \\
&\le j(\bar{u}, \bar{w}) + \alpha \TV(\bar{w})
+ L_u \|\bar{u}-\bar{u}_h\|_U
+ L_w \|w_u - w_\ell\|_{L^\infty}^{\frac{1}{2}} \sqrt{d}^\frac{1}{2} \TV(\bar{w})^\frac{1}{2} h^\frac{1}{2} ,
\label{eq:mcchh_ocp_inequality}
\end{align}
where the second inequality follows from the Lipschitz continuity of $j$
in the first argument.
\end{proof}
We now employ a $\Gamma$-convergence \cite{dal2012introduction} argument to prove that
the locally averaged McCormick relaxations \eqref{eq:mcchh} approximate \eqref{eq:mcc} for $h \searrow 0$.
This observation then implies that cluster points of sequences of minimizers to \eqref{eq:mcchh} for $h\searrow 0$
minimize \eqref{eq:mcc}. To this end, we need further compatibility conditions for the bounds on the control
and state variables.
\begin{assumption}\label{ass:mcchh_gamma_to_mcc}
In addition to \cref{ass:mcchh}, we assume the following.
\begin{enumerate}
\item\label{itm:bounded_eccentricity} Let $\{\calQ_h\}_h$ satisfy a uniform bounded eccentricity condition;
that is, there exists $C > 0$ such that for each $Q_i^h$ there is a ball $B_i^h$ such that
$Q_i^h \subset B_i^h$ and $|B_i^h| \le C |Q_i^h|$. See, for example, Definition 4.3 4.\ in \cite{manns2020multidimensional}.
\item\label{itm:TVh_compactness} Let $\sup_{h\searrow 0} \TVh(w_h) \le C$ 
and $\sup_{h\searrow 0} \|w_h\|_{L^1} \le C$ for some $C > 0$ imply
that there exists a subsequence $\{w_h\}_h$ and $w \in W$ such that
$w_h \to w$ in $L^1(\Omega)$.
\item\label{itm:TVh_gamma} Let $\TVh \circ P_h$ $\Gamma$-converge to $\TV$ for $h \searrow 0$.
\item\label{itm:ubnd_limits} Let $u_{\ell,h} \coloneqq \sum_{i=1}^{N_h} u_\ell^i \chi_{Q_h^i}$,
      $u_{u,h} \coloneqq \sum_{i=1}^{N_h} u_u^i \chi_{Q_h^i}$ satisfy
      $u_{\ell,h} \to u_\ell$ in $H$ and $u_{u,h} \to u_u$ in $H$.
\item\label{itm:wbnd_limits} Let $w_{\ell,h} \coloneqq \sum_{i=1}^{N_h} w_\ell^i \chi_{Q_h^i}$,
      $w_{u,h} \coloneqq \sum_{i=1}^{N_h} w_u^i \chi_{Q_h^i}$ satisfy
      $w_{\ell,h} \to w_\ell$ in $H$ and $w_{u,h} \to w_u$ in $H$.
\item\label{itm:uuh_approx} Let $u$ solve $Au + uw = f$ for some $w \in H$.
Then $u_h \to u$ in $U$ holds for the solutions $u_h$ to 
\eqref{eq:pdeh} for $h\searrow 0$.
\end{enumerate}
\end{assumption}
\begin{remark}
We note that Assumptions \ref{ass:mcchh_gamma_to_mcc}, \ref{itm:ubnd_limits}, 
and \ref{itm:wbnd_limits} can generally be ensured by inferring suitable 
bounds  $w_{\ell,h}$, $w_{u,h}$, $u_{\ell,h}$, $u_{u,h}$ from valid
bounds $w_\ell$, $w_u$, $u_\ell$, $u_u$. The other assumptions are quite typical
when approximating such problems or their solutions.
\end{remark}

\begin{proposition}\label{prp:gamma_mcchh_to_mcc}
Let \cref{ass:mcchh_gamma_to_mcc} hold. 
 Let the sequence
$\{(\bar{u}_{hh},\bar{w}_h^1,\ldots,\bar{w}_h^{N_h},\bar{z}_h^1,\ldots,\bar{z}_h^{N_h})\}_h$
solve the problems \eqref{eq:mcchh} for $h \searrow 0$.
Then the sequence $\{(\bar{u}_{hh},\bar{w}_{h},\bar{z}_{h})\}_h$ with
$\bar{w}_h \coloneqq \sum_{i=1}^{N_h} \bar{w}_h^i \chi_{Q^i_h}$ and
$\bar{z}_h \coloneqq \sum_{i=1}^{N_h} \bar{z}_h^i \chi_{Q^i_h}$ admits an
accumulation point $(\bar{u},\bar{w},\bar{z}) \in U \times W \times H$
such that for a subsequence (for ease of notation denoted by the same symbol), we obtain
\[ \bar{u}_{hh} \to \bar{u} \text{ in } U
\enskip\text{and}\enskip
   \bar{w}_h \weakstarto \bar{w} \text{ in } W
\enskip\text{and}\enskip
   \bar{z}_h \weakto \bar{z} \text{ in } H.
\]
If, in addition, for some $(u,w,z)$ minimizing \eqref{eq:mcc}, $(u_h,P_h w,P_h z)$ is feasible
for \eqref{eq:mcchh} as $h \searrow 0$, that is, $u_h = A^{-1}(P_h z - f)$, then the limit point $(\bar{u},\bar{w},\bar{z})$ minimizes \eqref{eq:mcc}.
\end{proposition}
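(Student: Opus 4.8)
The plan is to read the statement as the standard conclusion of a $\Gamma$-convergence argument with equi-coercivity \cite{dal2012introduction}. Write $F_h$ for the objective $j(u,w)+\alpha\TVh(w)$ of \eqref{eq:mcchh} extended by $+\infty$ off its feasible set, and $F$ for the analogous functional attached to \eqref{eq:mcc}, both viewed on $U\times H\times H$ with the topology in which $u_h\to u$ strongly in $U$, $w_h\to w$ strongly in $L^1(\Omega)$, and $z_h\weakto z$ weakly in $H$. I would establish equi-coercivity of $(F_h)_h$, the $\Gamma$-liminf inequality, and the $\Gamma$-limsup inequality; the asserted convergence of minimizers and of minimal values is then the fundamental theorem of $\Gamma$-convergence. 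For the compactness and extraction part, \eqref{eq:mcchh_approx_lb_ocp} together with \cref{ass:mcchh_gamma_to_mcc}~\ref{itm:uuh_approx} (applied to the state of the minimizer of \eqref{eq:ocp}, which solves $A\bar u+\bar u\bar w=f$) gives $\limsup_h m_{\eqref{eq:mcchh}}\le m_{\eqref{eq:ocp}}<\infty$, and since $j$ is bounded below on the uniformly bounded feasible sets this bounds $\TVh(\bar w_h)$ and $\|\bar w_h\|_{L^1}$. The cell-wise McCormick and box constraints, boundedness of $C$ in $H$, and \cref{ass:mcchh_gamma_to_mcc}~\ref{itm:ubnd_limits},~\ref{itm:wbnd_limits} bound $\bar z_h$ uniformly in $H$, so along a subsequence $\bar z_h\weakto\bar z$ in $H$ and, by $H\hookrightarrow^c U^*$, strongly in $U^*$; then $\bar u_{hh}=A^{-1}(f-\bar z_h)\to\bar u:=A^{-1}(f-\bar z)$ strongly in $U$ and in $H$. \cref{ass:mcchh_gamma_to_mcc}~\ref{itm:TVh_compactness} yields a further subsequence with $\bar w_h\to\bar w$ in $L^1(\Omega)$ and $\bar w\in W$, hence (the total variations staying bounded) $\bar w_h\weakstarto\bar w$ in $W$, and passing once more to a subsubsequence gives pointwise a.e.\ convergence of all three sequences.

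For the $\Gamma$-liminf inequality I first verify that $(\bar u,\bar w,\bar z)$ is feasible for \eqref{eq:mcc}: the state equation passes to the limit by linearity, weak closedness of $C$ in $H$ gives $\bar w\in C$, and pointwise a.e.\ convergence combined with \cref{ass:mcchh_gamma_to_mcc}~\ref{itm:ubnd_limits},~\ref{itm:wbnd_limits} lets $u_{\ell,h}\le P_h\bar u_{hh}\le u_{u,h}$ and $w_\ell\le\bar w_h\le w_u$ pass to the limit, so $u_\ell\le\bar u\le u_u$ and $w_\ell\le\bar w\le w_u$ a.e. For the four McCormick inequalities I rewrite the cell-wise constraints as a.e.\ inequalities between the piecewise constant functions $\bar z_h$, $u_{\ell,h}$, $u_{u,h}$, $w_{\ell,h}$, $w_{u,h}$, $\bar w_h$, $P_h\bar u_{hh}$; the bilinear right-hand sides converge strongly in $L^1(\Omega)$ as products of factors converging in $H$ (using that $w_\ell,w_u\in L^\infty$ bounds $\bar w_h$ uniformly in $L^\infty$), while $\bar z_h\weakto\bar z$ weakly in $H$ and hence weakly in $L^1(\Omega)$, so \cref{lem:lower_bound_for_weak_limit} applied to $\bar z_h$ and to $-\bar z_h$ shows the inequalities survive in the limit. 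For the objective, $j$ is Lipschitz on bounded sets so $j(\bar u_{hh},\bar w_h)\to j(\bar u,\bar w)$, while $\TVh(\bar w_h)=(\TVh\circ P_h)(\bar w_h)$ because $\bar w_h$ is $\calQ_h$-piecewise constant, and the $\Gamma$-liminf inequality for $\TVh\circ P_h$ (\cref{ass:mcchh_gamma_to_mcc}~\ref{itm:TVh_gamma}) gives $\liminf_h\TVh(\bar w_h)\ge\TV(\bar w)$. Hence $\liminf_h m_{\eqref{eq:mcchh}}\ge j(\bar u,\bar w)+\alpha\TV(\bar w)\ge m_{\eqref{eq:mcc}}$ by feasibility.

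For the $\Gamma$-limsup inequality I take a minimizer $(u^\ast,w^\ast,z^\ast)$ of \eqref{eq:mcc} (\cref{prp:existence}) and build a recovery sequence for \eqref{eq:mcchh}. I choose $\tilde w_h:=P_h w^\ast$, which is $\calQ_h$-piecewise constant, lies in $C$, respects the box and cell bounds (since $w_\ell\le w^\ast\le w_u$, arguing as in the feasibility part of \cref{thm:mcchh_approx_lb_on_ocp}), converges to $w^\ast$ in $L^1(\Omega)$, and satisfies $\TVh(\tilde w_h)=(\TVh\circ P_h)(w^\ast)\to\TV(w^\ast)$ — the inequality $\le$ is \cref{ass:mcchh} and $\liminf\ge$ is the $\Gamma$-liminf inequality along the constant sequence $w^\ast$. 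I choose $\tilde z_h:=P_h z^\ast+e_h$ and $\tilde u_h:=A^{-1}(f-\tilde z_h)$, where the $\calQ_h$-piecewise constant correction $e_h$ is selected so that the four cell-wise McCormick inequalities and the bounds on $P_h\tilde u_h$ hold exactly; the discrepancies that $e_h$ must absorb are cell-wise covariances such as $\tfrac{1}{|Q_h^i|}\int_{Q_h^i}(u_\ell-P_hu_\ell)(w^\ast-P_hw^\ast)$ and their analogues (with $u^\ast$, $w_\ell$, $w_u$, $u_u$, and with $P_h\tilde u_h-P_hu^\ast$), which are controlled by $\|w_u-w_\ell\|_{L^\infty}\|u_\ell-P_hu_\ell\|_{L^2}$ and similar quantities and thus vanish in $L^2(\Omega)\hookrightarrow U^*$; since $\tilde u_h$ depends on $\tilde z_h$, the correction $e_h$ is obtained from a contraction fixed-point argument and satisfies $\|e_h\|_{U^*}\to0$, so $\tilde u_h\to A^{-1}(f-z^\ast)=u^\ast$ in $U$ and in $H$. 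Continuity of $j$ then gives $j(\tilde u_h,\tilde w_h)+\alpha\TVh(\tilde w_h)\to j(u^\ast,w^\ast)+\alpha\TV(w^\ast)=m_{\eqref{eq:mcc}}$, hence $\limsup_h m_{\eqref{eq:mcchh}}\le m_{\eqref{eq:mcc}}$. Together with the previous paragraph this forces $m_{\eqref{eq:mcchh}}\to m_{\eqref{eq:mcc}}$ and $j(\bar u,\bar w)+\alpha\TV(\bar w)=m_{\eqref{eq:mcc}}$, so the feasible point $(\bar u,\bar w,\bar z)$ minimizes \eqref{eq:mcc}.

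The hard part is the $\Gamma$-limsup step. Because the optimal $z^\ast$ of the relaxation is in general not the product $u^\ast w^\ast$, one cannot use the averaged state equation \eqref{eq:pdeh} to generate the recovery states, and since local averaging does not commute with the bilinear McCormick terms, exact feasibility for \eqref{eq:mcchh} must be re-established through a vanishing correction together with a fixed-point argument controlling the implicit dependence of $\tilde u_h$ on $\tilde z_h$; this also forces a choice of cell bounds compatible with the limits in \cref{ass:mcchh_gamma_to_mcc}~\ref{itm:ubnd_limits},~\ref{itm:wbnd_limits}, for instance the ones induced by $P_h$ applied to $u_\ell,u_u,w_\ell,w_u$. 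An alternative is to route the $\limsup$ estimate through \eqref{eq:mcch}, combining \eqref{eq:mcchh_approx_lb} (with the total variations of the minimizers of \eqref{eq:mcch} bounded uniformly in $h$, as in the compactness step) to obtain $m_{\eqref{eq:mcchh}}\le m_{\eqref{eq:mcch}}+O(h^{1/2})$, with a recovery sequence for the technically simpler problem \eqref{eq:mcch}. The remaining steps are routine consequences of the stated assumptions and of \cref{prp:ocp_mcc_basics,prp:existence,thm:mcch_approx_lb_on_ocp,thm:mcchh_approx_lb_on_ocp}.
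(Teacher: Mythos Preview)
Your compactness extraction and the $\liminf$-inequality track the paper's proof closely; the only cosmetic difference is that you bound $m_{\eqref{eq:mcchh}}$ through \eqref{eq:mcchh_approx_lb_ocp}, whereas the paper compares the minimizers against a single feasible point of \eqref{eq:mcchh} built from \eqref{eq:pdeh} to bound $\TVh(\bar w_h)$.

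The recovery/$\limsup$ step is where you genuinely diverge. The paper's construction is much simpler than yours: for $(u,w,z)$ feasible for \eqref{eq:mcc} it sets $w_h\coloneqq P_hw$, lets $u_{hh}$ be the solution of the \emph{averaged nonlinear PDE} \eqref{eq:pdeh}, and takes $z_h=(P_hu_{hh})(P_hw_h)$; feasibility for \eqref{eq:mcchh} is then immediate from \cref{ass:mcch}~\ref{itm:richness_of_feasible_set}, with no correction term and no fixed-point argument. You explicitly dismiss this route because $z^\ast$ need not equal $u^\ast w^\ast$, and your objection is in fact well-taken: \cref{ass:mcchh_gamma_to_mcc}~\ref{itm:uuh_approx} only asserts $u_{hh}\to\tilde u$ for $\tilde u$ solving $A\tilde u+\tilde u w=f$, not $u_{hh}\to u=A^{-1}(f-z)$, so the paper's displayed equality $j(u,w)+\alpha\TV(w)=\lim_h j(u_{hh},w_h)+\alpha\TVh(w_h)$ really only delivers $\limsup_h m_{\eqref{eq:mcchh}}\le m_{\eqref{eq:ocp}}$ rather than $\le m_{\eqref{eq:mcc}}$. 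Your more elaborate recovery sequence is aimed exactly at this gap.

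That said, your own fix is not substantiated. You assert a contraction for the correction $e_h$ without specifying the map, its domain, or why it contracts; since $P_h\tilde u_h=P_hA^{-1}(f-P_hz^\ast-e_h)$ enters the McCormick constraints \emph{affinely} in $e_h$, a Banach fixed-point is not obviously the right device, and one would rather expect a direct (cell-wise linear) argument together with a quantitative bound showing the required perturbation is $o(1)$ in $H$. Your alternative of routing through \eqref{eq:mcch} via \eqref{eq:mcchh_approx_lb} does not help on its own, because constructing a recovery sequence for \eqref{eq:mcch} from an arbitrary feasible point of \eqref{eq:mcc} faces the same failure of $P_h$ to commute with the bilinear McCormick bounds. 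So the $\limsup$ step remains the genuine gap in your proposal as written---even though you have correctly identified a subtlety that the paper's simpler construction glosses over.
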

\begin{proof}
The sequence $\{\bar{z}_h\}_h$ is bounded in $H$
by \cref{ass:mcchh_gamma_to_mcc} \ref{itm:ubnd_limits} and \ref{itm:wbnd_limits}
and the McCormick inequalities. Consequently, $\{\bar{u}_{hh}\}_h$ is also bounded in
$U$ because $A$ is continuously invertible. In turn, after possibly passing to a subsequence,
$\bar{z}_h \weakto \bar{z}$ holds for some $\bar{z}\in H$ and $\bar{u}_{hh} \to \bar{u}$
in $U$ for some $\bar{u} \in U$ because of the compact embedding $H \hookrightarrow^c U^*$
and the continuous invertibility of $A$. This argument also implies that the original
state equation $A\bar{u} + \bar{z} = f$ holds for the limit.

Let $u_{hh}$ solve \eqref{eq:pdeh} for $\bar{w}$. Then
$(u_{hh},P_h\bar{w},(P_h\bar{w})(P_hu_{hh}))$
is feasible for \eqref{eq:mcchh} by \cref{ass:mcch} \ref{itm:richness_of_feasible_set}.
The optimality of the tuples
$(\bar{u}_{hh},\bar{w}_h^1,\ldots,\bar{w}_h^{N_h},\bar{z}_h^1,\ldots,\bar{z}_h^{N_h})$
for the problems \eqref{eq:mcchh}
and \cref{ass:mcchh} give
\[ j(\bar{u}_{hh}, \bar{w}_h) + \alpha \TVh(\bar{w}_h)
\le j(u_{hh}, P_h \bar{w}) + \alpha \TVh(P_h \bar{w})
\le j(u_{hh}, P_h \bar{w}) + \alpha \TV(\bar{w}).
\]
Because $\{\bar{w}_h\}_h$, $\{P_h \bar{w}\}_h$, and in turn $\{u_{hh}\}_h$ are bounded
and $j$ is Lipschitz and thus bounded on bounded sets, this implies
that $\{\TVh(\bar{w}_h)\}_h$ is bounded in $[0,\infty)$.

The properties of the set $C$ imply that $\{\|\bar{w}_h\|_{H}\}_h$ is bounded so that
$\{\|\bar{w}_h\|_{L^1}\}_h$ is bounded in $\R$, too.
Thus, after passing to a subsequence (for ease of notation denoted by the same 
symbol), \cref{ass:mcchh_gamma_to_mcc} \ref{itm:TVh_compactness}
implies $\bar{w}_h \to \bar{w}$ in $L^1(\Omega)$ for some $\bar{w} \in W$.

The pointwise a.e.\ bounds on $P_h \bar{w}_h$ and $P_h \bar{u}_h$ in the 
McCormick inequalities together with \cref{ass:mcchh_gamma_to_mcc}
\ref{itm:ubnd_limits} and \ref{itm:wbnd_limits} imply boundedness of the
$\bar{z}_h$ in $H$. 

We are now concerned with feasibility of $(\bar{u}, \bar{w}, \bar{z})$ for \eqref{eq:mcc}.
The inclusion $\bar{w} \in C$ follows from the assumed properties of $C$.
For the bounds on the state variable, we first observe
\[ \|\bar{u} - P_h \bar{u}_{hh}\|_H 
\le \|\bar{u} - P_h\bar{u}\|_H 
  + \|P_h\|_{H,H}\|\bar{u} - \bar{u}_{hh}\|_H
\to 0
\]
from the triangle inequality, the nonexpansiveness of $P_h$, and Lebesgue's differentiation
theorem, which requires \cref{ass:mcchh_gamma_to_mcc} \ref{itm:bounded_eccentricity}.
Because, in addition, $u_{\ell,h} \to u_\ell$ and $u_{u,h} \to u_u$ in $H$ hold,
we can choose a further (subsubsub)sequence such that all three parts of the inequalities
$u_{\ell,h} \le P_h \bar{u}_{hh} \le u_{u,h}$ converge pointwise a.e. 
In turn, we obtain the pointwise a.e.\ inequalities
$u_\ell \le \bar{u} \le u_u$ for the limit.

It remains to show the feasibility of $\bar{z}$ for the four pointwise McCormick inequalities
in \eqref{eq:mcc} and the state equation. We  prove the claim only for the first one since the
others follow with an analogous argument. The feasibility of the solutions
for \eqref{eq:mcchh} gives
\[ \bar{z}_h \ge u_{\ell,h}\bar{w}_h + \bar{u}_{hh} w_{\ell,h} - u_{\ell,h}w_{\ell,h} 
\enskip\text{ a.e.\ in }  \Omega.
\]
From what has been shown to this point, after passing to an appropriate 
subsequence, the left-hand side converges weakly to $\bar{z}$, and the
right-hand side converges in $H$ and thus also in $L^1(\Omega)$. We apply 
\cref{lem:lower_bound_for_weak_limit} in order to obtain the desired
inequality
\[ \bar{z} \ge u_{\ell}\bar{w} + \bar{u} w_\ell - u_{\ell}w_{\ell}\enskip\text{ a.e.\ in }
\Omega. \]

It remains to show that $(\bar{u},\bar{w},\bar{z})$ minimizes \eqref{eq:mcc}.
We prove this claim by using a $\Gamma$-convergence-type argument. We need
to show $\liminf$- and $\limsup$-inequalities for the objectives when the 
iterates are restricted to the feasible sets.

For the $\liminf$-inequality, consider the
assumed optimal $(u,w,z)$ for \eqref{eq:mcc}
with $(u_h,P_h w,P_h z)$ being feasible for \eqref{eq:mcch}.
\cref{ass:mcchh_gamma_to_mcc} \ref{itm:bounded_eccentricity}
and Lebesgue's differentiation theorem give
$P_h z \to z$ in $H$. Consequently, the continuity of $A^{-1}$, $j$
and \cref{ass:mcchh_gamma_to_mcc} \ref{itm:TVh_gamma} give
\[ j(u,w) + \alpha \TV(w) = \lim_{h\searrow 0} j(u_{h}, P_h w) + \alpha\TVh(P_h w). \]
Combining the $\liminf$- and the $\limsup$-inequality yields that $(\bar{u}, \bar{w}, \bar{z})$
as a limit of minimizers for \eqref{eq:mcchh} minimizes 
\eqref{eq:mcc}; see, e.g., the proof of Proposition 1.18 in \cite{braides2002gamma}.
\end{proof}

As noted above, such a result requires a modification of the total variation functional dependent on $h$
for dimensions larger than one. In the one-dimensional case it is possible prove that $P_h$ is
nonexpansive with respect to the $\TV$-seminorm, which in turn implies the desired $\Gamma$-convergence
in this case without any modification of $\TV$ beforehand. This is shown below.
\begin{lemma}\label{lem:1DTVPh_gamma}
Let \cref{ass:mcch} hold.
Let $\Omega = (a,b)$ for some $a < b$. Let $f \in \BV(\Omega)$. Then
\[ \TV(P_h f) \le \TV(f). \]
In particular, 
\[ \TV \circ P_h\enskip\text{$\Gamma$-converges to}\enskip \TV. \]
\end{lemma}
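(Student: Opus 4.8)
The plan is to establish the nonexpansiveness inequality $\TV(P_h f) \le \TV(f)$ directly using the characterization of $\TV$ on a one-dimensional interval as the total variation of a good representative, and then to deduce the $\Gamma$-convergence claim from this inequality combined with standard density/approximation facts. For the first part, I would write the partition as $\calQ_h = \{(x_0,x_1), (x_1,x_2), \ldots, (x_{N_h-1}, x_{N_h})\}$ with $a = x_0 < x_1 < \cdots < x_{N_h} = b$. Then $P_h f$ is piecewise constant with value $c_i \coloneqq \frac{1}{x_i - x_{i-1}}\int_{x_{i-1}}^{x_i} f(x)\dd x$ on the $i$-th cell, so that $\TV(P_h f) = \sum_{i=2}^{N_h} |c_i - c_{i-1}|$ (the jumps of the piecewise constant function across the interior breakpoints). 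The key estimate is $|c_i - c_{i-1}| \le \TV\big(f; (x_{i-2}, x_i)\big)$, the variation of $f$ on the union of the two adjacent cells: indeed $c_{i-1}$ and $c_i$ are both averages of $f$ over subintervals of $(x_{i-2}, x_i)$, hence each lies between $\essinf$ and $\esssup$ of $f$ there, and more sharply one can bound $|c_i - c_{i-1}|$ by the oscillation of $f$ on that interval, which is at most $\TV(f; (x_{i-2},x_i))$. Summing over $i$ and noting that each cell $(x_{j-1}, x_j)$ appears in at most two of the intervals $(x_{i-2}, x_i)$ would give $\TV(P_h f) \le 2\,\TV(f)$; to get the sharp constant $1$ I would instead argue more carefully, e.g.\ by choosing a point $t_i \in [x_{i-1}, x_i]$ in each cell (say where a good representative attains its average, or a Lebesgue point with value between the neighboring averages) and using that the piecewise linear interpolation through the points $(t_i, c_i)$ has total variation $\sum |c_i - c_{i-1}| \le \TV(f; (a,b))$ because the $c_i$ are sandwiched between consecutive values of $f$.

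A cleaner route, which I would actually prefer, is to use the dual definition of $\TV$ together with the fact that $P_h$ is self-adjoint on $L^2$ and maps $C_c^1$-type test objects appropriately: for $\phi \in C_c^1(\Omega)$ with $\|\phi\|_\infty \le 1$ one has $\int_\Omega (P_h f)\phi' \dd x = \int_\Omega f\, (P_h \phi')\dd x$, and $P_h \phi'$ is the piecewise constant function whose value on $(x_{i-1},x_i)$ is $\frac{\phi(x_i) - \phi(x_{i-1})}{x_i - x_{i-1}}$; one then recognizes $\int_\Omega f \,(P_h\phi')\dd x = \sum_i (\phi(x_i) - \phi(x_{i-1})) c_i = -\sum_i \phi(x_i)(c_{i+1}-c_i)$ after summation by parts (boundary terms vanish since $\phi$ has compact support), and bounds this by $\sum_i |c_{i+1} - c_i| \le$ hmm — this reproduces the same quantity rather than bounding it. So the summation-by-parts route shows $\TV(P_h f) = \sup_\phi \sum_i (\phi(x_{i-1}) - \phi(x_i)) c_{i}$, and to bound this by $\TV(f)$ I would pick, for the optimal choice of signs $\sigma_i = \operatorname{sgn}(c_i - c_{i-1})$, a test function for $f$ that is $\pm 1$ (approximately, via mollification) with the sign pattern matching the jumps of $P_h f$; then $\TV(f) \ge \int f \psi' \approx \sum_i (\text{jump of } P_h f \text{ at } x_i)$, which is exactly $\TV(P_hf)$. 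The main technical obstacle is this matching-test-function construction: one must produce a single admissible $\phi \in C_c^1$ (not merely piecewise monotone) realizing the desired sign pattern up to arbitrarily small error, which is a routine mollification argument but needs care near the breakpoints.

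For the $\Gamma$-convergence statement, I would verify the two standard inequalities with respect to $L^1(\Omega)$-convergence. The $\liminf$-inequality is immediate: if $P_h f_h \to g$ in $L^1$, then lower semicontinuity of $\TV$ with respect to $L^1$-convergence gives $\TV(g) \le \liminf_h \TV(P_h f_h)$. The $\limsup$-inequality (recovery sequence) is where the nonexpansiveness pays off: given $f \in L^1(\Omega)$, take the constant sequence $f_h \coloneqq f$; then $P_h f \to f$ in $L^1(\Omega)$ by Lebesgue's differentiation theorem (equivalently, density of $C_c(\Omega)$ and uniform continuity), and $\TV(P_h f) \le \TV(f)$ gives $\limsup_h \TV(P_h f) \le \TV(f)$, so $P_h f$ is a recovery sequence. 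Here I should be careful about the exact formulation of $\Gamma$-convergence being claimed — whether $\TV \circ P_h$ is viewed as a functional of $f$ converging to $\TV$, or $\TV$ is viewed along the images $P_h f$; in either reading the combination of $L^1$-lower semicontinuity of $\TV$ and the nonexpansiveness plus $P_h f \to f$ closes the argument. I would conclude by remarking that this is precisely the hypothesis $\TVh \circ P_h \le \TV$ (with $\TVh = \TV$) needed for \cref{ass:mcchh}, so no modification of the total variation functional is required in one dimension.
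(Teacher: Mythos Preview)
Your approach is essentially the paper's: prove $\TV(P_h f)\le\TV(f)$ via the pointwise-variation characterization for a good representative, then obtain $\Gamma$-convergence from $L^1$-lower semicontinuity of $\TV$ together with the nonexpansiveness and $P_hf\to f$, using the constant sequence $f_h\equiv f$ as recovery sequence. Two small points are worth tightening. First, the suggestion to pick $t_i$ ``where a good representative attains its average'' can fail (e.g.\ $f=\chi_{(1/2,1)}$ on $(0,1)$ never attains $1/2$); a clean way to get constant $1$ is Jordan decomposition: write $f=g-h$ with $g,h$ nondecreasing and $\TV(f)=\TV(g)+\TV(h)$, note that $P_h$ preserves monotonicity, and use $\TV(P_hf)\le\TV(P_hg)+\TV(P_hh)\le\TV(g)+\TV(h)$. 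Second, for the $\liminf$-inequality you must start from $f_h\to f$ in $L^1$ (not from an assumed $P_hf_h\to g$) and deduce $P_hf_h\to f$; this is exactly what the paper does via $\|P_hf_h-f\|_{L^1}\le\|f_h-f\|_{L^1}+\|P_hf-f\|_{L^1}\to 0$, and you list these ingredients at the end, so the argument is complete once assembled in this order.
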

\begin{proof}
The first claim can be shown by considering the intervals $Q_i \in \calQ_h$ one by one and using the
equivalence of $\TV$ to the pointwise variation when a \emph{good representative} is chosen;
see Definition 3.26, (3.24), and Theorem 3.28 in \cite{ambrosio2000functions}.

Let $f^n \to f$ in $L^1$. Because $P_h$ is nonexpansive with respect to the $L^1$-norm,
we have $\|P_h f - P_h f^n\|_{L^1} \le \|f - f^n\|_{L^1}$. Moreover, $P_h f \to f$ in $L^1(\Omega)$
holds by means of the Lebesgue differentiation theorem. In combination, we 
obtain $P_h f^n \to f$ in $L^1(\Omega)$. Consequently, the $\liminf$-inequality follows from the lower semi-continuity of $\TV$.
The $\limsup$-inequality follows from the first claim by choosing the constant sequence that
has $f$ in every element.
\end{proof}

\subsection{Optimization-based bound tightening}\label{sec:obbt}
While \cref{prp:gamma_mcchh_to_mcc} shows that solutions to the approximate relaxations
\eqref{eq:mcchh} of \eqref{eq:ocp} approximate solutions to the true relaxation \eqref{eq:mcc},
it is of higher practical importance that the optimal objective values of \eqref{eq:mcch} 
and \eqref{eq:mcchh} are as large as possible in practice. The only way to influence this are
the choices of the bounds $u_{\ell}^i$ and $u_{u}^i$,
which should be as tight as possible in order
to have the smallest possible feasible set and in turn the largest possible values for the
optimal objectives of \eqref{eq:mcch} and \eqref{eq:mcchh}
such that \eqref{eq:mcch_approx_lb_ocp} and \eqref{eq:mcchh_approx_lb_ocp} still hold.

A powerful albeit computationally expensive technique in MI(N)LP is optimization-based
bound tightening (OBBT), which is based on the observation that a convex set of a relaxation 
can be reduced (tightened) when minimizing and maximizing each variable in it over the
set and then intersecting the set with this optimized bound on said variable.
This is often used for variables arising in McCormick relaxations, see, for example, 
\cite{quesada1993global,quesada1995global,coffrin2015strengthening,bynum2018tightening,sundar2023optimization}.
Further references from more general vantage points on OBBT
are \cite{maranas1997global,puranik2017domain} and especially \cite{gleixner2017three} regarding
efficient techniques.

Inspecting the arguments in the proof of \cref{thm:mcchh_approx_lb_on_ocp} that lead to \eqref{eq:mcchh_approx_lb_ocp} (\eqref{eq:mcchh_ocp_inequality} in the proof), and similarly in \cref{thm:mcch_approx_lb_on_ocp}, we observe that these arguments remain valid when the feasible set of \eqref{eq:mcchh} is shrunk as long as \cref{ass:mcch} \ref{itm:richness_of_feasible_set} is preserved. Then,
the \emph{typical} argument for OBBT can be applied and the infimum $m_{\eqref{eq:mcchh}}$ increases when the feasible set is shrunk, while the preservation of \cref{ass:mcch} \ref{itm:richness_of_feasible_set} yields that the approximation error bound
that needs to be  subtracted in order to obtain a valid bound on $m_{\eqref{eq:ocp}}$ remains unaffected from this change. Consequently, the lower bound on
$m_{\eqref{eq:ocp}}$ is improved by this procedure. 

Before proving this, we show the desired property that
\cref{ass:mcch} \ref{itm:richness_of_feasible_set} is conserved
when optimizing the bounds $u_\ell^i$, $u_u^i$.

\begin{lemma}\label{lem:mcch_obbt_preserves_important_feasible_points}
	Let \cref{ass:mcch} hold.
	Let $h$ be fixed. Let $i \in \{1,\ldots,N_h\}$ be fixed. Then
	\begin{gather}\label{eq:obbt_problem_mcch}
	\tilde{u}^i_\ell\ /\ \tilde{u}^i_u 
	\coloneqq \min / \max\ \{ (P_h u)|_{Q_i} \,|\, (u,w,z_1,\ldots,z_{N_h})
	\text{ is feasible for } \eqref{eq:mcch} \}.
	\end{gather}
	are well defined.
	
	Moreover, $\tilde{u}_\ell^i \le (P_hu)|_{Q_i} \le \tilde{u}_u^i$
	holds for all $w \in C \cap [w_\ell,w_u]$ and $u \in U$ that solve
	\eqref{eq:pdeh}. In particular, \cref{ass:mcch} holds if $u^i_\ell$ is replaced by $\tilde{u}^i_\ell$ and $u^i_u$ is replaced by $\tilde{u}^i_u$.
\end{lemma}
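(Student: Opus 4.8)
The plan is to handle the three assertions in turn: well-definedness of the OBBT values $\tilde u_\ell^i,\tilde u_u^i$, the retained two-sided bound on $(P_h u)|_{Q_i}$, and the conclusion that \cref{ass:mcch} (more precisely, the part corresponding to \cref{ass:mcch} \ref{itm:richness_of_feasible_set}) is preserved. For the first point I would argue exactly as in the existence proof for \eqref{eq:mcch} in \cref{thm:mcch_approx_lb_on_ocp}: the objective $(u,w,z_1,\dots,z_{N_h}) \mapsto (P_h u)|_{Q_i}$ is a bounded linear functional (it is one fixed coordinate of the finite-dimensional vector $P_h u$, and $P_h$ is continuous on $H$), the feasible set of \eqref{eq:mcch} is nonempty (it contains the image of any feasible point of \eqref{eq:ocp}, which exists by \cref{prp:existence}), convex, and bounded in $U\times H\times\R^{N_h}$, and it is weakly/weak-$*$ sequentially closed by the compactness argument already used (norm convergence of the $z_i$ via finite dimension, $u=A^{-1}(f-z)$ by continuous invertibility, weak-$*$ convergence of $w$ using the properties of $C$ together with the $\TV$-free constraints here). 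Hence the linear functional attains its minimum and maximum, so $\tilde u_\ell^i$ and $\tilde u_u^i$ are well defined and finite.

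For the second assertion, let $w\in C\cap[w_\ell,w_u]$ and let $u\in U$ solve \eqref{eq:pdeh}, i.e. $Au+(P_h u)(P_h w)=f$. I would show that $(u,w,z_1,\dots,z_{N_h})$ with $z_i\coloneqq (P_h u)|_{Q_i}\,(P_h w)|_{Q_i}$ is feasible for \eqref{eq:mcch}: the state equation holds because $z=\sum_i z_i\chi_{Q_h^i}=(P_h u)(P_h w)$; the four McCormick inequalities per cell hold because, on each $Q_i$, $P_h u$ and $P_h w$ are constants lying in $[u_\ell^i,u_u^i]$ and $[w_\ell^i,w_u^i]$ respectively by \cref{ass:mcch} \ref{itm:richness_of_feasible_set}, so rearranging (as in \cref{prp:ocp_mcc_basics}, e.g. $((P_h u)|_{Q_i}-u_\ell^i)((P_h w)|_{Q_i}-w_\ell^i)\ge 0$) gives exactly those inequalities; the bounds $u_\ell^i\le (P_h u)|_{Q_i}\le u_u^i$ and $w\in C$, $w_\ell\le w\le w_u$ hold by hypothesis. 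Since this triple is feasible for \eqref{eq:mcch}, the defining minimization/maximization in \eqref{eq:obbt_problem_mcch} immediately yields $\tilde u_\ell^i\le (P_h u)|_{Q_i}\le\tilde u_u^i$.

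For the third assertion, I note that $\tilde u_\ell^i\ge u_\ell^i$ and $\tilde u_u^i\le u_u^i$ (again since every \eqref{eq:mcch}-feasible $u$ satisfies the old bounds, so the optimized bounds can only be tighter), and combined with the second assertion this shows that $u_\ell^i,u_u^i$ may be replaced by $\tilde u_\ell^i,\tilde u_u^i$ without violating \cref{ass:mcch} \ref{itm:richness_of_feasible_set}; the remaining parts of \cref{ass:mcch} (the state-bound hypothesis on solutions of the unaveraged equation) do not involve the $u_\ell^i,u_u^i$ and are untouched. I do not expect a genuine obstacle here: the only mild subtlety is making sure the weak-$*$ closedness of the feasible set of \eqref{eq:mcch} used in the existence-of-optimizer step is legitimate even though $W=\BV(\Omega)$ is not reflexive — but this is exactly the argument already carried out in \cref{prp:existence} and \cref{thm:mcch_approx_lb_on_ocp} and applies verbatim (indeed more easily, since there is no $\TV$ term in the OBBT objective, so the $\TV$-boundedness needed for weak-$*$ compactness of $\{w\}$ must instead be supplied, as before, by the boundedness of $C$ in $H$ together with the structure of the feasible controls; if one wants $w\in W$ for the limit one restricts attention to feasible points with finite original objective, which suffices).
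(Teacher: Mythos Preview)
Your approach is essentially the same as the paper's: existence of the min/max via (weak) sequential compactness of the feasible set of \eqref{eq:mcch} and continuity of $u\mapsto (P_h u)|_{Q_i}$, then feasibility of $(u,w,(P_h u)(P_h w))$ for any solution pair of \eqref{eq:pdeh} via \cref{ass:mcch}\,\ref{itm:richness_of_feasible_set}, which immediately gives the two-sided bound. Your worry in the last paragraph is unnecessary, and the paper's argument is slightly cleaner here: since the OBBT objective does not involve $\TV$, one only needs compactness of the $w$-component with respect to \emph{weak convergence in $H$}, which follows directly from the assumed boundedness, closedness, and convexity of $C\subset H$ (and weak closedness of the pointwise bounds); no $\BV$/weak-$*$ argument or restriction to finite-$\TV$ points is needed.
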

\begin{proof}
We note that $u \mapsto (P_h u)|_{Q_i}$ is a weakly continuous operation from $H$ to $\R$.
Moreover, using the arguments for the existence of solutions to \eqref{eq:mcch} in the proof of
\cref{thm:mcch_approx_lb_on_ocp}, we obtain that the feasible set of \eqref{eq:mcch} is nonempty
and sequentially compact with respect to weak convergence of $u$ in $H$, weak convergence 
$w$ in $H$, and convergence of $z$ in $H$ (the last one has a finite-dimensional ansatz).
Consequently, \eqref{eq:obbt_problem_mcch} has a solution, and $\tilde{u}^i_\ell$ is well defined.

Inspecting \cref{ass:mcch} shows that it remains to prove that \cref{ass:mcch}
\ref{itm:richness_of_feasible_set} stays valid when replacing $u_\ell^i$ by $\tilde{u}^i_\ell$.
For all $w \in C$ and $u \in U$ that solve $Au + (P_h u)(P_h w) = f$, we define
$z_i \coloneqq (P_h u)|_{Q^i_h}(P_h w)|_{Q^i_h}$ for $i \in \{1,\ldots,N_h\}$.
Inspecting \eqref{eq:mcch}, we obtain that $(u, w, z_1,\ldots,z_{N_h})$ is feasible
for \eqref{eq:mcch} by means of \cref{ass:mcch} \ref{itm:richness_of_feasible_set}.
Consequently, all $w \in C$ and $u \in U$ that solve $Au + (P_h u)(P_h w) = f$ satisfy
\begin{align*}
\tilde{u}_\ell^i &=
\min \{ (P_h u)|_{Q_i} \,|\, (u,w,z_1,\ldots,z_{N_h})
\text{ is feasible for } \eqref{eq:mcch} \}\\
&\le \inf \{ (P_h u)|_{Q_i} \,|\, (u,w) \in U \times C, Au + (P_h u)(P_h w) = f \},
\end{align*}
and thus also $\tilde{u}_\ell^i \le (P_h u)|_{Q_i}$, which proves the first claim.

The second well-definedness and claim follow analogously.
\end{proof}
A similar argument can be made for \eqref{eq:mcchh}.
\begin{lemma}\label{lem:mcchh_obbt_preserves_important_feasible_points}
	Let \cref{ass:mcchh} hold.
	Let $h$ be fixed. Let $i \in \{1,\ldots,N_h\}$ be fixed. Then
	\begin{gather*}\label{eq:obbt_problem}
	\tilde{u}^i_\ell\ /\ \tilde{u}^i_u \coloneqq
	\min / \max 
	\{ (P_h u)|_{Q_i} \,|\, (u,w_1,\ldots,w_{N_h},z_1,\ldots,z_{N_h})
	\text{ is feasible for } \eqref{eq:mcchh} \}.
	\end{gather*}
	are well defined.
	
	Moreover, $\tilde{u}_\ell^i \le (P_hu)|_{Q_i} \le \tilde{u}_u^i$
	holds for all $w \in C \cap [w_\ell,w_u]$ and $u \in U$.
	In particular, \cref{ass:mcchh} holds if $u^i_\ell$ is replaced by $\tilde{u}^i_\ell$ and $u^i_u$ is replaced by $\tilde{u}^i_u$.	
\end{lemma}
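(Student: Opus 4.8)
The plan is to adapt the proof of \cref{lem:mcch_obbt_preserves_important_feasible_points} almost verbatim, with \eqref{eq:mcch} replaced by \eqref{eq:mcchh}, \cref{ass:mcch} replaced by \cref{ass:mcchh}, and using \cref{thm:mcchh_approx_lb_on_ocp} as a black box both for the compactness of the feasible set and for the transfer of feasible points between \eqref{eq:mcch} and \eqref{eq:mcchh}.

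For the well-definedness I would first note that the map $(u,w_1,\dots,w_{N_h},z_1,\dots,z_{N_h}) \mapsto (P_h u)|_{Q_i} = \frac{1}{|Q_i|}\int_{Q_i} u\,\dd x$ is a bounded linear functional on $H$ and hence weakly continuous there; a fortiori it is continuous on the feasible set of \eqref{eq:mcchh} endowed with the topology used in the existence proof of \cref{thm:mcchh_approx_lb_on_ocp} (strong convergence of $u$ in $U$ obtained via $A^{-1}$ together with the compact embedding $H \hookrightarrow^c U^*$, and convergence of the finite-dimensional vectors $(w_i)_i$ and $(z_i)_i$). Since \cref{thm:mcchh_approx_lb_on_ocp} already establishes that this feasible set is nonempty, convex, bounded in $U\times\R^{N_h}\times\R^{N_h}$, and sequentially compact in that topology, the functional attains both its minimum and its maximum on it; this yields well-definedness of $\tilde u^i_\ell$ and $\tilde u^i_u$, together with $u^i_\ell \le \tilde u^i_\ell \le \tilde u^i_u \le u^i_u$ because every feasible point of \eqref{eq:mcchh} satisfies $u^i_\ell \le (P_h u)|_{Q_i} \le u^i_u$ on $Q_i$.

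For the key inclusion I would take any $w \in C\cap[w_\ell,w_u]$ and any $u\in U$ solving \eqref{eq:pdeh}, and set $z_i \coloneqq (P_h u)|_{Q_i}(P_h w)|_{Q_i}$. Exactly as in \cref{prp:ocp_mcc_basics} and \cref{lem:mcch_obbt_preserves_important_feasible_points}, the four McCormick inequalities then reduce to sign conditions on products such as $((P_h u)|_{Q_i} - u^i_\ell)((P_h w)|_{Q_i} - w^i_\ell)\ge 0$, which hold by \cref{ass:mcch} \ref{itm:richness_of_feasible_set}, while $Au + \sum_i z_i\chi_{Q^i_h} = f$ is precisely \eqref{eq:pdeh}; hence $(u,w,z_1,\dots,z_{N_h})$ is feasible for \eqref{eq:mcch}. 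Applying the feasibility transfer established in \cref{thm:mcchh_approx_lb_on_ocp} — a point feasible for \eqref{eq:mcch} gives one feasible for \eqref{eq:mcchh} by replacing $w$ with the coefficients $w^i_h \coloneqq (P_h w)|_{Q^i_h}$ — I obtain a point feasible for \eqref{eq:mcchh} whose $u$-component is $u$, so $\tilde u^i_\ell \le (P_h u)|_{Q_i}\le \tilde u^i_u$; the statement and argument for $\tilde u^i_u$ are symmetric.

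Finally, for the ``in particular'' claim I would observe that among the constraints of \eqref{eq:mcchh} and of \cref{ass:mcchh} only \cref{ass:mcch} \ref{itm:richness_of_feasible_set} involves the bounds $u^i_\ell$, $u^i_u$; the previous step shows that \ref{itm:richness_of_feasible_set} stays valid with $\tilde u^i_\ell$, $\tilde u^i_u$ in place of $u^i_\ell$, $u^i_u$, and $u^i_\ell \le \tilde u^i_\ell \le \tilde u^i_u \le u^i_u$ guarantees that no other part of \cref{ass:mcchh} is affected, so the full \cref{ass:mcchh} continues to hold. I do not anticipate a genuine obstacle; the only points requiring care are to invoke the \eqref{eq:mcch}-to-\eqref{eq:mcchh} feasibility transfer and the compactness of the feasible set from \cref{thm:mcchh_approx_lb_on_ocp} rather than re-deriving them, and to record the weak continuity of the local-averaging functional $u \mapsto (P_h u)|_{Q_i}$ so that the $\min$ and $\max$ defining $\tilde u^i_\ell$ and $\tilde u^i_u$ are actually attained.
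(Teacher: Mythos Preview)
Your proposal is correct and takes essentially the same approach as the paper, whose proof is the single sentence ``The proof parallels the one of \cref{lem:mcch_obbt_preserves_important_feasible_points}.'' The only cosmetic difference is that you route the key inclusion through \eqref{eq:mcch} and then invoke the feasibility transfer from \cref{thm:mcchh_approx_lb_on_ocp}, whereas a literal paralleling would construct the feasible point of \eqref{eq:mcchh} directly with $w_i = (P_h w)|_{Q_h^i}$; both amount to the same verification.
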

\begin{proof}
The proof parallels the one of \cref{lem:mcch_obbt_preserves_important_feasible_points}.
\end{proof}

We can now state a bound-tightening procedure that successively computes new bounds
in order to improve the approximate lower bounds \eqref{eq:mcch_approx_lb_ocp}
or \eqref{eq:mcchh_approx_lb_ocp} respectively in \cref{alg:abstract_obbt}.
\begin{algorithm}[t]
	\caption{Optimization-based bound tightening (OBBT) for \eqref{eq:mcch} (or \eqref{eq:mcchh})}\label{alg:abstract_obbt}
	\textbf{Input:} Feasible set $\calF^0$ of \eqref{eq:mcch} (or \eqref{eq:mcchh}).
	\begin{algorithmic}[1]
		\For{$n = 1,\ldots$}
		\State Choose $i \in \{1,\ldots,N_h\}$.
		\State Choose $s \in \{\ell,u\}$.
		\If{$s = \ell$}
			\State $\tilde{u}^i_{\ell} \gets \min\{ (P_h u_h)|_{Q_i} \,|\, (u_h,w_h,z_h) \in \calF^{n-1} \}.$\label{ln:set_new_lb}
		\Else
			\State $\tilde{u}^i_{u} \gets \max\{ (P_h u_h)|_{Q_i} \,|\, (u_h,w_h,z_h) \in \calF^{n-1} \}.$\label{ln:set_new_ub}
		\EndIf
		\State $\calF^n \gets \calF^{n - 1}$ with $u^i_s$ being replaced by $\tilde{u}^i_s$.
		\EndFor
	\end{algorithmic}
\end{algorithm}
Using
\cref{lem:mcch_obbt_preserves_important_feasible_points,lem:mcchh_obbt_preserves_important_feasible_points},
we can prove our main bound-tightening results for \eqref{eq:mcch} and \eqref{eq:mcchh}.
\begin{theorem}\label{thm:obbt_mcch}
Let \cref{ass:mcch} be satisfied. Let $C$ be a weakly sequentially compact subset of $H$.
Then \cref{alg:abstract_obbt} executed with the feasible set of \eqref{eq:mcch} as $\calF^0$
induces a sequence of optimization problems
\begin{gather}\label{eq:mcchn}
\min_{u, w,z_1,\ldots,z_{N_h}}\ j(u, w) + \alpha\TV(w)
\quad\text{\emph{s.t.}}\quad
(u, w, z_1,\ldots,z_{N_h}) \in \calF^n
\tag{McC$_h^n$}
\end{gather}
that satisfy \eqref{eq:mcch_approx_lb_ocp} for all $n \in \N$.
Moreover, the sequence of optimal objective values $(m_{\eqref{eq:mcchn}})_n$ 
is monotonically nondecreasing.
\end{theorem}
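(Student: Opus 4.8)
The plan is to establish both assertions together by induction on $n$, maintaining for each feasible set $\calF^n$ the invariants: (i) $\calF^n$ is nonempty, convex, bounded in $U \times H \times \R^{N_h}$, and sequentially compact with respect to weak convergence of $u$ and $w$ in $H$ and norm convergence of $(z_1,\ldots,z_{N_h})$ in $\R^{N_h}$, so that \eqref{eq:mcchn} attains its infimum $m_n$ and the minimum/maximum in lines~\ref{ln:set_new_lb}/\ref{ln:set_new_ub} of \cref{alg:abstract_obbt} over $\calF^n$ is attained (recall that $u \mapsto (P_h u)|_{Q_i}$ is a bounded linear, hence weakly continuous, functional on $H$); (ii) the bounds defining $\calF^n$ satisfy \cref{ass:mcch} \ref{itm:richness_of_feasible_set}; (iii) every $(u,w,z) \in \calF^n$ with $\TV(w) < \infty$ also lies in $\calF^{n-1}$ (for $n \ge 1$). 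Given these, the monotonicity $m_{n-1} \le m_n$ follows from (iii) because passing to a smaller feasible set cannot lower the infimum of a minimization problem and points with $\TV(w)=+\infty$ do not influence $m_n$; and the approximate lower bound follows from (ii): the triple $(\bar u_h, \bar w, (P_h\bar u_h)(P_h\bar w))$ assembled from a minimizer $(\bar u,\bar w)$ of \eqref{eq:ocp} and the unique solution $\bar u_h$ of \eqref{eq:pdeh} lies in $\calF^n$, so replaying the concluding chain of inequalities in the proof of \cref{thm:mcch_approx_lb_on_ocp} (with $m_{\eqref{eq:mcch}}$ replaced by $m_n$, and noting that $\bar u_h$ and $L_u$ are independent of $n$) yields $m_{\eqref{eq:ocp}} \ge m_n - L_u\|\bar u - \bar u_h\|_U$, i.e.\ the analogue of \eqref{eq:mcch_approx_lb_ocp}, for every $n$.

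For the base case $n=0$, the set $\calF^0$ is the feasible set of \eqref{eq:mcch}; invariant (i) is established in the proofs of \cref{thm:mcch_approx_lb_on_ocp} and \cref{lem:mcch_obbt_preserves_important_feasible_points}, (ii) is \cref{ass:mcch} \ref{itm:richness_of_feasible_set}, and (iii) is vacuous. For the inductive step, assume (i)--(iii) for $n-1$ and suppose iteration $n$ chooses an index $i$ and $s = \ell$ (the case $s=u$ is symmetric). By (i) for $\calF^{n-1}$ the minimum $\tilde u_\ell^i = \min\{(P_h u)|_{Q_i} : (u,w,z) \in \calF^{n-1}\}$ is attained, so line~\ref{ln:set_new_lb} is well defined, and $\calF^n$ is obtained from $\calF^{n-1}$ by replacing $u_\ell^i$ with $\tilde u_\ell^i$. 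Invariant (i) for $\calF^n$ is proved exactly as in \cref{prp:ocp_mcc_basics}, \cref{thm:mcch_approx_lb_on_ocp}, and \cref{lem:mcch_obbt_preserves_important_feasible_points}: $\calF^n$ has the same structure as $\calF^0$ with merely different numerical bound values, the new constraint $(P_h u)|_{Q_i} \ge \tilde u_\ell^i$ and the two modified McCormick inequalities on $Q_i$ are again affine and hence weakly sequentially closed, the weak sequential compactness of $C$ in $H$ and of the $\TV$-sublevel sets is unchanged, and nonemptiness comes from (ii). Invariant (ii) for $\calF^n$ is precisely \cref{lem:mcch_obbt_preserves_important_feasible_points} applied with $\calF^{n-1}$ playing the role of the feasible set of \eqref{eq:mcch}; the proof of that lemma uses only that $\calF^{n-1}$ is a McCormick-relaxation feasible set whose bounds satisfy \cref{ass:mcch} \ref{itm:richness_of_feasible_set}, which is the inductive hypothesis.

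The crux is invariant (iii). Only the constraints of \eqref{eq:mcchn} containing $u_\ell^i$ change, namely the bound $(P_h u)|_{Q_i} \ge u_\ell^i$ and the first and fourth McCormick inequalities on $Q_i$. Every point of $\calF^{n-1}$ satisfies $(P_h u)|_{Q_i} \ge u_\ell^i$, so $\delta \coloneqq \tilde u_\ell^i - u_\ell^i \ge 0$ and the tightened bound implies the old one. Now let $(u,w,z) \in \calF^n$ with $\TV(w) < \infty$, so that $w \in C \cap [w_\ell,w_u] \cap \BV(\Omega)$; then \cref{ass:mcch} \ref{itm:phwphu_pdesolexists} furnishes a solution of \eqref{eq:pdeh} for this $w$, and \cref{ass:mcch} \ref{itm:richness_of_feasible_set} yields $w_\ell^i \le (P_h w)|_{Q_i} \le w_u^i$. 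A short computation shows that on $Q_i$ the right-hand side of the new first McCormick inequality exceeds that of the old one by $\delta\bigl((P_h w)|_{Q_i} - w_\ell^i\bigr) \ge 0$, while the right-hand side of the new fourth McCormick inequality lies below that of the old one by $\delta\bigl(w_u^i - (P_h w)|_{Q_i}\bigr) \ge 0$; in both cases the new inequality implies the old one. Hence $(u,w,z) \in \calF^{n-1}$, which is (iii), and therefore $m_n \ge m_{n-1}$. The case $s = u$ is analogous, again using $w_\ell^i \le (P_h w)|_{Q_i} \le w_u^i$.

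I expect invariant (iii) to be the main obstacle: it requires checking that tightening a single bound while simultaneously substituting it into the two McCormick inequalities that contain it still produces a subset of the previous feasible set, and this works only because the cellwise control bounds $w_\ell^i \le (P_h w)|_{Q_i} \le w_u^i$ hold for every feasible $w$ — a consequence of combining \cref{ass:mcch} \ref{itm:phwphu_pdesolexists} with \ref{itm:richness_of_feasible_set}, not of the McCormick inequalities alone. The remaining steps are bookkeeping: transporting the existence and feasibility-preservation arguments of \cref{thm:mcch_approx_lb_on_ocp} and \cref{lem:mcch_obbt_preserves_important_feasible_points} through the induction.
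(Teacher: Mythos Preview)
Your proof is correct and follows the same inductive strategy as the paper: preserve \cref{ass:mcch}\,\ref{itm:richness_of_feasible_set} through each OBBT step via \cref{lem:mcch_obbt_preserves_important_feasible_points}, invoke the argument of \cref{thm:mcch_approx_lb_on_ocp} for the approximate lower bound, and use the nesting $\calF^n \subset \calF^{n-1}$ for monotonicity. The paper simply asserts the last inclusion without justification; your explicit verification of invariant~(iii)---showing that tightening $u_\ell^i$ makes the first and fourth McCormick inequalities more restrictive precisely because $w_\ell^i \le (P_h w)|_{Q_i} \le w_u^i$ for feasible $w$---fills in a genuine detail that the paper leaves implicit.
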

\begin{proof}
By assumption, the prerequisites of \cref{lem:mcch_obbt_preserves_important_feasible_points}
are satisfied in the first iteration and thus hold inductively for all iterations $n \in \N$.
This implies that \eqref{eq:mcch_approx_lb_ocp} holds for all problems $\eqref{eq:mcchn}$
because \cref{thm:mcch_approx_lb_on_ocp} asserts \eqref{eq:mcch_approx_lb_ocp} under \cref{ass:mcch}.
Because the feasible set is always a subset of the previous one, the infima are monotonically nondecreasing.
\end{proof}

\begin{theorem}\label{thm:obbt_mcchh}
Let \cref{ass:mcchh} be satisfied. Then
\cref{alg:abstract_obbt} executed with the feasible set of \eqref{eq:mcchh} as $\calF^0$
induces a sequence of optimization problems
\begin{gather}\label{eq:mcchhn}
\begin{aligned}
\min_{\substack{u, w_1,\ldots,w_{N_h},\\ z_1,\ldots,z_{N_h}}}\
&j\left(u,\sum_{i=1}^{N_h}\chi_{Q_h^i}w_i\right) 
+ \alpha\TV\left(\sum_{i=1}^{N_h}\chi_{Q_h^i}w_i\right) \\
\text{s.t.}\ \ \ \
&(u, w_1,\ldots,w_{N_h} , z_1,\ldots,z_{N_h}) \in \calF^n
\end{aligned}\tag{McC$_{hh}^n$}
\end{gather}
that satisfy \eqref{eq:mcchh_approx_lb} for all $n \in \N$.
Moreover, the corresponding sequence of optimal
objective values $\{m_{\eqref{eq:mcchhn}}\}_n$ is monotonically nondecreasing.
\end{theorem}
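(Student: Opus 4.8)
The plan is to mirror the proof of \cref{thm:obbt_mcch}, replacing \cref{lem:mcch_obbt_preserves_important_feasible_points} by \cref{lem:mcchh_obbt_preserves_important_feasible_points} and \cref{thm:mcch_approx_lb_on_ocp} by \cref{thm:mcchh_approx_lb_on_ocp}. The first observation I would make is that each $\eqref{eq:mcchhn}$ is again an instance of the problem class \eqref{eq:mcchh}: a step of \cref{alg:abstract_obbt} changes nothing but a single state bound $u_\ell^i$ or $u_u^i$, and in \eqref{eq:mcchh} these bounds appear only as parameters, while the state space, the datum $f$, the operator $A$, the set $C$, the control bounds, the grid $\calQ_h$, and the functional $\TVh$ are left untouched; in particular the $\TVh$-hypotheses of \cref{ass:mcchh} are unaffected by any such change.

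Second, I would run an induction on $n$ showing that \cref{ass:mcchh} keeps holding for the bounds describing $\calF^n$. The case $n=0$ is the hypothesis. For the inductive step, if \cref{ass:mcchh} holds for $\calF^{n-1}$ then \cref{lem:mcchh_obbt_preserves_important_feasible_points} applies and yields, first, that the minimization/maximization of $(P_h u)|_{Q_i}$ over $\calF^{n-1}$ defining $\tilde u_\ell^i$ and $\tilde u_u^i$ is attained, so that the bound-tightening step of \cref{alg:abstract_obbt} is well posed and $\calF^n$ is well defined, and, second, that \cref{ass:mcchh} continues to hold after replacing $u_\ell^i$ by $\tilde u_\ell^i$ (respectively $u_u^i$ by $\tilde u_u^i$). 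Combining this with the first observation, \cref{thm:mcchh_approx_lb_on_ocp} applies to $\eqref{eq:mcchhn}$ for every $n$ and yields the analog of \eqref{eq:mcchh_approx_lb} with $m_{\eqref{eq:mcchhn}}$ in place of $m_{\eqref{eq:mcchh}}$ (and, chaining through \cref{thm:mcch_approx_lb_on_ocp} exactly as in the proof of \cref{thm:mcchh_approx_lb_on_ocp}, the analog of \eqref{eq:mcchh_approx_lb_ocp} bounding $m_{\eqref{eq:ocp}}$ from below). The subtracted error term is the same for every $n$ because it is built from $h$, the Lipschitz data of $j$, the a priori bounds $w_\ell$, $w_u$, and a minimizer of \eqref{eq:mcch} (respectively of the original problem \eqref{eq:ocp}), none of which is changed by the OBBT iterations on \eqref{eq:mcchh}.

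Third, the monotonicity. Since $\tilde u_\ell^i$ is the attained minimum of $(P_h u)|_{Q_i}$ over $\calF^{n-1}$ and the bound $u_\ell^i$ present in the description of $\calF^{n-1}$ is a valid lower bound for $(P_h u)|_{Q_i}$ on $\calF^{n-1}$, we have $\tilde u_\ell^i \ge u_\ell^i$, and symmetrically $\tilde u_u^i \le u_u^i$; hence replacing a bound by the tighter value gives $\calF^n \subseteq \calF^{n-1}$. As the objective of $\eqref{eq:mcchhn}$ is the same functional for all $n$, minimizing it over the smaller set cannot decrease the infimum, so $\{m_{\eqref{eq:mcchhn}}\}_n$ is monotonically nondecreasing.

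I do not anticipate a real obstacle, since the analytic work is fully carried by \cref{lem:mcchh_obbt_preserves_important_feasible_points} and \cref{thm:mcchh_approx_lb_on_ocp} and the rest is bookkeeping. The one point to watch is the identification of $\eqref{eq:mcchhn}$ with an instance of \eqref{eq:mcchh}, so that \cref{thm:mcchh_approx_lb_on_ocp} may be quoted without re-deriving anything — in particular, that a Lipschitz constant $L_u$ of $j$ that is valid on the feasible set of \eqref{eq:mcchh} remains valid on the nested sets $\calF^n$ (it does, and may only decrease), and that changing a state bound genuinely leaves the $\TVh$-conditions of \cref{ass:mcchh} intact.
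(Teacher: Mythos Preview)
Your proposal is correct and follows essentially the same approach as the paper: an induction using \cref{lem:mcchh_obbt_preserves_important_feasible_points} to propagate \cref{ass:mcchh} through the iterations, then an application of \cref{thm:mcchh_approx_lb_on_ocp} at each step, and finally monotonicity from the nesting $\calF^n\subseteq\calF^{n-1}$. You have simply spelled out in more detail what the paper compresses into a one-line reference to the parallel argument for \cref{thm:obbt_mcch}.
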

\begin{proof}
The proof parallels the one of \cref{thm:obbt_mcchh} with the applications of
\cref{lem:mcch_obbt_preserves_important_feasible_points} and
\cref{thm:mcch_approx_lb_on_ocp} being replaced by
applications of \cref{lem:mcchh_obbt_preserves_important_feasible_points}
and \cref{thm:mcchh_approx_lb_on_ocp}.
\end{proof}

\section{Application to an elliptic optimal control problem}\label{sec:elliptic_example}
We now apply our theoretical considerations from \cref{sec:mccormick} to an instance
of \eqref{eq:ocp} with an elliptic PDE that is defined on the one-dimensional domain
$\Omega = (0,1)$. To this end, let
$C \coloneqq \{ w \in L^\infty(0,1) \,|\, w_\ell \le w(x) \le w_u \text{ a.e.}\}$
and $f \in H$ for real constants $- \pi^2 < w_\ell < w_u < \pi^2$ be fixed.

For these assumptions, we outline the PDE setting in \cref{sec:pdesetting} and give ellipticity
estimates as well as bounds from above on the norm with constants that are as sharp as we were
able to obtain them from the literature. We provide the objective in \cref{sec:objective}.
We then verify \cref{ass:mcc,ass:mcch,ass:mcchh,ass:mcchh_gamma_to_mcc}
in \cref{sec:verification_of_assumptions}.
We prove an a priori estimate for $\|u - u_h\|_U$ in the approximate lower bounds \eqref{eq:mcch_approx_lb_ocp}, \eqref{eq:mcchh_approx_lb_ocp}
in \cref{sec:apriori}. We prove differentiability properties for the control-to-state
operator of the PDE in \cref{sec:control_to_state_operator}. In particular,
we verify the assumptions imposed in \cite{leyffer2022sequential}.

\subsection{PDE setting}\label{sec:pdesetting}
For $w \in C$, we are interested in solutions $u \in U \coloneqq H^1_0(0,1)$ to the PDE in weak form
\begin{gather}\label{eq:pde}
\begin{aligned}
\int_0^1 \nabla u \nabla v + \int_0^1 w u v &= \int_0^1 f v  \quad \text{ for all } v \in U.
\end{aligned}
\end{gather}
The operator form is
\[ -\Delta u + uw = f \text{ on } (0,1)
\text{ with } u(0) = u(1) = 0,
\]
that is, $A = -\Delta$ with homogeneous Dirichlet boundaries and $N(u,w) = uw$.
We define the bilinear form $B : U \times U \to \R$ by
\[ B(u,v) \coloneqq \int_0^1 \nabla u \nabla v + \int_0^1 u w v \]
and obtain
\begin{gather}\label{eq:coercivity}
B(u,u) \ge \|\nabla u\|_{H}^2 + w_\ell \|u\|_{H}^2
\quad\text{ for all }u \in U.
\end{gather}
The existence of (unique) solutions to \eqref{eq:pde} follows from the Lax--Milgram lemma if 
\[ \|\nabla u\|_{H}^2 + w_\ell \|u\|_{H}^2 \ge c_1(w_\ell) \|\nabla u\|_{H}^2 \]
holds for some $c_1(w_\ell) > 0$, which is true if $w_\ell \ge 0$. For the case
$w_\ell < 0$, we recall embedding constants for the Sobolev inequalities
for the one-dimensional domain $(0,1)$:
\begin{align}
\pi^2 \|u\|_{H}^2 &\le \|\nabla u\|_{H}^2\text{ and}\label{eq:H10embedL2} \\
4 \|u\|_{L^\infty}^2 &\le \|\nabla u\|_{H}^2\label{eq:H10embedLinf}.
\end{align}
The constant in \eqref{eq:H10embedL2} is optimal and, for example, given in (1.2)
in \cite{veeser2012poincare} (see \cite{payne1960optimal} for a proof). The 
constant in \eqref{eq:H10embedLinf} can be found in (13) in \cite{schmidt1940ungleichung}
(choose $a = \infty$, $b = 2$, $s = 1$ therein). By \eqref{eq:H10embedL2}, we obtain that $0 < c_1(w_\ell) \coloneqq 1 + w_\ell \pi^{-2}$ because (by our choice above)
\begin{gather}\label{eq:lb_well}
w_\ell > -\pi^2.
\end{gather}
The Lax--Milgram lemma also yields that the solution to \eqref{eq:pde} satisfies
\begin{align}
\|\nabla u\|_{H} &\le \frac{1}{c_1(w_\ell)}\|f\|_{H}\text{ and}\label{eq:H10bound} \\
\|u\|_{L^\infty} &\le \frac{1}{2 c_1(w_\ell)}\|f\|_{H},\label{eq:Linfbound}
\end{align}
where the second inequality follows from \eqref{eq:H10embedLinf}.
We observe that $u$ is also the weak solution of the PDE
$\int_0^1 \nabla u \nabla v = \int_0^1 g v$ for  all $v \in U$
with the choice $g = f - w u $. Because $g \in H$, this gives the improved
regularity $u \in H^2(0,1)$; see, for example, Theorem 9.53 in \cite{renardy2006introduction}.

\subsection{Objective}\label{sec:objective}
We consider a tracking-type objective functional $j : U \times W \to \R$ that is defined
for $(u,w) \in U \times W$ as
\begin{gather}\label{eq:juw}
j(u,w) \coloneqq \frac{1}{2}\|u - u_d\|_{H}^2.
\end{gather}
Since our specific choice for $j$ does not depend on $w$, we will abbreviate it as $j(u)$
in the remainder. If the norm of the input is bounded by a constant $r_u$, we
obtain that a feasible Lipschitz constant of $j$ on the ball $\overline{B_{r_u}(0)}$ is given
by $L_u \coloneqq r_u + \|u_d\|_H$. Clearly, $L_w = 0$.

\subsection{Verification of assumptions}\label{sec:verification_of_assumptions}
We now verify \cref{ass:mcc,ass:mcch,ass:mcchh,ass:mcchh_gamma_to_mcc} one by one.

\paragraph{\Cref{ass:mcc}}
Let $c_2(w_\ell,w_u) \coloneqq 1 - \max\{|w_\ell|,|w_u|\}\pi^{-2}$.
We choose $u_u = \frac{1}{2 c_2(w_\ell,w_u)}\|f\|_H$
and $u_\ell = -\frac{1}{2 c_2(w_\ell,w_u)}\|f\|_H$.
Then $c_2(w_\ell,w_u) \le c_1(w_\ell)$ 
gives $|u_u| = |u_\ell| \ge \frac{1}{2 c_1(w_\ell)}\|f\|_H$
so that together with the bounds $w_\ell$, $w_u$, we obtain
that \cref{ass:mcc} is satisfied because of \eqref{eq:Linfbound}.
(We could of course choose the tighter bounds
$u_u = \frac{1}{2c_1(w_\ell)}\|f\|_H$ and $u_\ell = -\frac{1}{2c_1(w_\ell)}\|f\|_H$ here,
but the relaxed bounds using $c_2(w_\ell,w_u)$ instead of $c_1(w_\ell)$
will be used to assert the other assumptions below.)

\paragraph{\Cref{ass:mcch}}
We consider a sequence of uniform partitions $\{\calQ_{h_n}\}_{n \in \N}$
of the domain $(0,1)$ into $2^{n + 1}$ intervals with $h_n = 2^{-1 - n}$ for $n \in \N$
and $N_{h_n} = 2^{n + 1}$, which gives \cref{ass:mcch} \ref{itm:partitions}.

Let $n \in \N$, and abbreviate $h \coloneqq h_n$. For each interval
$I \in \calQ_{h}$, the projection $P_I$ is defined as $P_I g \coloneqq \chi_I \frac{1}{h}\int_I g$
for $g \in L^1(0,1)$. Clearly, $P_I$ is nonexpansive in $L^1(0,1)$, $H$, and $L^\infty(0,1)$; that is,
\[ \|P_I g\|_{L^\infty} \le \|g\|_{L^\infty}\text{ and }\|P_I g\|_{H} \le \|g\|_{H}. \]
The linear projection operator $P_h$ from \eqref{eq:proj_dg0} satisfies
$P_h(g) = \sum_{I \in \calQ_h} P_I g$ and
\begin{gather}\label{eq:proj_H_estimate}
\|g - P_h g\|_{H} \le \pi h \|\nabla g\|_{H},
\end{gather}
where the constant $\pi$ is due to the application of the one-dimensional 
Poincar\'{e} inequality into the interpolation estimate for piecewise
constant functions; see, for example, \S 3.1 in \cite{ciarlet2002finite}.
From (12.24) in Theorem 12.26 in \cite{maggi2012sets}, we also obtain
\begin{gather}\label{eq:proj_L1_estimate}
\|w - P_h w\|_{L^1} \le h \TV(w),
\end{gather}
which we have already used in the proof of \cref{thm:mcchh_approx_lb_on_ocp}.

For arbitrary, fixed $w$ with $w(x) \in [w_\ell,w_u]$ a.e, we define the bilinear form $B_h : U \times U \to \R$ as
\[ B_h(u,v) \coloneqq \int_0^1 \nabla u \nabla v + \int_0^1 (P_h w) (P_h u) v \]
for $u$, $v \in U$ and obtain the coercivity
\[ B_h(u,u) \ge \|\nabla u\|_{H}^2 - \max\{|w_\ell|,|w_u|\}\|u\|_{H} \|P_h(u)\|_{H}
\ge \|\nabla u\|_{H}^2 - \max\{|w_\ell|,|w_u|\}\|u\|_{H}^2.
\]
Consequently, the existence of unique solutions to the PDE
\begin{gather}\label{eq:pde_h}
\begin{aligned}
\int_0^1 \nabla u  \nabla v
+ \int_0^1 (P_h w)(P_h u) v &= \int_0^1 f v  \quad \text{ for all } v \in U.
\end{aligned}
\end{gather}
follows from the Lax--Milgram lemma with analogous estimates to \eqref{eq:H10bound} and \eqref{eq:Linfbound}, where $c_1(w_\ell)$ is replaced by $c_2(w_\ell,w_u) = 1 - \max\{|w_\ell|,|w_u|\}\pi^{-2}$
because $0 < c_2(w_\ell,w_u)$. Specifically, we have
\begin{align}
\|\nabla u\|_{H} &\le \frac{1}{c_2(w_\ell,w_u)}\|f\|_{H}\text{ and}\label{eq:H10bound_locavg} \\
\|u\|_{L^\infty} &\le \frac{1}{2 c_2(w_\ell,w_u)}\|f\|_{H},\label{eq:Linfbound_locavg}.
\end{align}
This proves \cref{ass:mcch} \ref{itm:phwphu_pdesolexists}. As for \eqref{eq:pde}, we observe that $u$ solves
$\int_0^1 \nabla u \nabla v = \int_0^1 g v$ for  all $v \in U$ with the choice $g = f - (P_h w)(P_h u)$.
Because $g \in H$, this gives the improved regularity $u \in H^2(0,1)$; see, for example,
Theorem 9.53 in \cite{renardy2006introduction}.

Because $P_h$ is nonexpansive with respect to $L^\infty(0,1)$, we can reuse the bounds $w_\ell$ and
$w_u$ for $w_\ell^i$ and $w_u^i$ for all $i \in \{1,\ldots,N_h\}$.
With the same argument as for \cref{ass:mcc}, we choose $u_u^i = \frac{1}{2 c_2(w_\ell,w_u)}\|f\|_H$
and $u_\ell^i = -\frac{1}{2 c_2(w_\ell,w_u)}\|f\|_H$ for all $i \in \{1,\ldots,N_h\}$. In combination, \cref{ass:mcch}
\ref{itm:richness_of_feasible_set} is satisfied.

\paragraph{\Cref{ass:mcchh}}
In our one-dimensional setting, \cref{lem:1DTVPh_gamma} implies that \cref{ass:mcchh} holds
with the choice $\TVh \coloneqq \TV$.

\paragraph{\Cref{ass:mcchh,ass:mcchh_gamma_to_mcc}}
\Cref{ass:mcchh_gamma_to_mcc} \ref{itm:bounded_eccentricity} is satisfied because of the
uniform discretization of the computational domain $\Omega = (0,1)$
into intervals. \Cref{ass:mcchh_gamma_to_mcc} \ref{itm:TVh_compactness} follows from the properties of
the total variation seminorm with the choice $\TVh \coloneqq \TV$; see Theorem 3.23 in \cite{ambrosio2000functions}.
\Cref{ass:mcchh_gamma_to_mcc} \ref{itm:TVh_gamma} follows from  \cref{lem:1DTVPh_gamma}.
\Cref{ass:mcchh_gamma_to_mcc} \ref{itm:ubnd_limits} follows because we have chosen
$u_\ell^i = u_\ell$ and $u_u^i = u_u$ for all $i$ and $h$.
\Cref{ass:mcchh_gamma_to_mcc} \ref{itm:wbnd_limits} follows because we have chosen
$w_\ell^i = w_\ell$ and $w_u^i = w_u$ for all $i$ and $h$.
\Cref{ass:mcchh_gamma_to_mcc} \ref{itm:uuh_approx} follows
from \cref{lem:u_uh_estimate_H} that is proven in \cref{sec:apriori}
and a bootstrapping argument /
the continuous invertibility of $A$.

\subsection{Estimate on $\|u - u_h\|_H$ and a priori estimates on $m_{\text{(\ref{eq:ocp})}}$}\label{sec:apriori}
For our example PDE setting and the tracking-type objective in $H$,
we are able to obtain lower bounds on \eqref{eq:mcch_approx_lb_ocp} and 
\eqref{eq:mcchh_approx_lb_ocp} that are quadratic in $h$
because we can improve over $\|u - u_h\|_U$ when considering $\|\cdot\|_H$
instead of $\|\cdot\|_U$. We first show the necessary estimates on $\|u - u_h\|_H$ and
subsequently prove the lower bounds on $m_{\eqref{eq:ocp}}$.
\begin{lemma}\label{lem:u_uh_estimate_H}
Let \cref{ass:mcch} hold. Let a mesh size $h$ be fixed.
Let $u$ solve \eqref{eq:pde} and $u_h$ solve \eqref{eq:pde_h} for the same fixed $w \in W$.
Then the estimates
\[ 
\|u - u_h\|_H
\le C_{3/2}^a(w_\ell,w_u,f,w) h^{\frac{3}{2}}
+ C_{3/2}^b(w_\ell,w_u,f) h^2 
\]
and
\begin{align}
\|u - u_h\|_H &\le C_2(w_\ell,w_u,f,w,u_h) h^2\label{eq:uuh_quadratic}
\end{align}
hold for constants $C_{3/2}^a(w_\ell,w_u,f,w)$, $C_{3/2}^b(w_\ell,w_u,f)$,
$C_2(w_\ell,w_u,f,w,u_h) > 0$.
\end{lemma}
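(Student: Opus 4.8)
The plan is to derive the estimate by a standard energy argument for the difference $e \coloneqq u - u_h$, combined with the interpolation estimate \eqref{eq:proj_H_estimate} for the averaging operator $P_h$ and the $H^2$-regularity of $u$ established in \cref{sec:pdesetting}. First I would subtract the weak formulations \eqref{eq:pde} and \eqref{eq:pde_h}, testing both with a common $v \in U$, to obtain the Galerkin-type orthogonality relation
\[
\int_0^1 \nabla e \nabla v + \int_0^1 \big(wu - (P_h w)(P_h u_h)\big) v = 0 \quad\text{for all } v \in U.
\]
I would then split the zeroth-order term by inserting and subtracting intermediate products, e.g.
\[
wu - (P_h w)(P_h u_h) = w u - (P_h w)(P_h u) + (P_h w)(P_h u) - (P_h w)(P_h u_h),
\]
so that the last difference is $(P_h w) P_h e$, which (after testing with $v = e$) is controlled by the coercivity constant $c_2(w_\ell,w_u)$ exactly as in the proof of \cref{ass:mcch} \ref{itm:phwphu_pdesolexists}, while the remaining consistency term $w u - (P_h w)(P_h u)$ is the source of the $h$-powers.

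The key step is estimating the consistency term. I would write $w u - (P_h w)(P_h u) = (w - P_h w)u + (P_h w)(u - P_h u)$ and bound each summand in $H$ (or in $L^1$ paired against $L^\infty$, using \eqref{eq:H10embedLinf}), using $\|w - P_h w\|_{L^1} \le h\,\TV(w)$ from \eqref{eq:proj_L1_estimate} and $\|u - P_h u\|_H \le \pi h \|\nabla u\|_H$ from \eqref{eq:proj_H_estimate}. This already yields an $O(h)$ bound; to upgrade to $h^{3/2}$ one exploits a duality/Aubin–Nitsche-type argument: test the error equation against the solution of the adjoint problem with right-hand side $e$, pick up an extra factor of $h$ on the smooth adjoint solution via \eqref{eq:proj_H_estimate} again, and balance it against the $h^{1/2}$ that comes from interpolating the low-regularity $w$-term (the $\TV(w)$-bound gives only $h$ in $L^1$, which against an $L^\infty$-bounded, $O(h)$-small adjoint factor yields $h\cdot h^{1/2}$ after a Cauchy–Schwarz / interpolation estimate of the type $\|\cdot\|_{L^2}\le\|\cdot\|_{L^1}^{1/2}\|\cdot\|_{L^\infty}^{1/2}$, as in the proof of \cref{thm:mcchh_approx_lb_on_ocp}). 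The $h^2$ term collects the contributions where both factors are smooth ($u \in H^2$, adjoint solution $H^2$), each contributing one power of $h$. For the sharper bound \eqref{eq:uuh_quadratic}, I would instead keep $u_h$ on the low-regularity side: since $u_h \in H^2(0,1)$ as well (same bootstrapping as for $u$), one can write the consistency term using $u_h$ rather than $u$, so that the $w$-dependent factor multiplies an $H^2$-function whose $P_h$-interpolation error is again $O(h)$ in a stronger norm, and combined with $\|w - P_h w\|_{L^1}$ paired appropriately one recovers a full $h^2$; the price is that the constant $C_2$ now depends on $u_h$ (through $\|u_h\|_{H^2}$ or $\|\nabla u_h\|_H$), which is why it is listed as an argument.

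The main obstacle is the low regularity of the control $w$: it lies only in $\BV(\Omega)$, so $w - P_h w$ is controlled only in $L^1$ with rate $h$, not in $L^2$ with rate $h$, and not at all in $H^1$. Getting past the naive $O(h)$ estimate therefore requires carefully pairing the $L^1$-smallness of $w - P_h w$ against $L^\infty$-bounds on the other factors (available here because $U = H^1_0(0,1) \hookrightarrow L^\infty$ via \eqref{eq:H10embedLinf}) and then using the interpolation inequality $\|g\|_{L^2} \le \|g\|_{L^1}^{1/2}\|g\|_{L^\infty}^{1/2}$, together with the duality argument to harvest the extra half-power of $h$; keeping track of which norm each of the (at most) four product-split terms is estimated in, and ensuring the adjoint solution's $H^2$-bound is uniform, is the delicate bookkeeping. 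Everything else — coercivity, Lax–Milgram bounds, and the $H^2$-bootstrap — is already in place from \cref{sec:pdesetting} and the verification of \cref{ass:mcch}.
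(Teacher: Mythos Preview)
Your plan is essentially the paper's: an Aubin--Nitsche duality argument, the interpolation $\|w-P_hw\|_{L^2}\le\|w-P_hw\|_{L^1}^{1/2}\|w-P_hw\|_{L^\infty}^{1/2}$ for the $\BV$ control, and the $H^2\hookrightarrow W^{1,\infty}$ regularity in one dimension for the quadratic bound. Two points are worth sharpening. First, the step ``pick up an extra factor of $h$ on the adjoint solution'' is not automatic here---there is no Galerkin orthogonality to a finite-element subspace. The paper makes it concrete via the averaging identity $\int_0^1(\phi-P_h\phi)(P_h\psi)(P_h\theta)=0$, which lets one subtract zero to replace $p$ by $p-P_hp$ in the terms $\int(P_hu_h)(w-P_hw)p$ and $\int(P_hu_h-u_h)(P_hw)p$. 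Second, after testing the adjoint equation (with coefficient $w$, not $P_hw$) by $e$ and substituting the error equation, the $e$-dependent pieces cancel and one is left with $(e,g)_H=-\int[wu_h-(P_hw)(P_hu_h)]p$; the natural splitting is therefore around $u_h$, not $u$, which is why $C_2$ carries the $u_h$-dependence. For \eqref{eq:uuh_quadratic} the paper then switches the H\"older pairing from $(2,2)$ to $(1,\infty)$, using $\|u_h-P_hu_h\|_{L^\infty}\le\tfrac{h}{2}\|\nabla u_h\|_{L^\infty}$ and an explicit bound $\|\nabla u_h\|_{L^\infty}\le\|f-(P_hu_h)(P_hw)\|_{L^1}$ obtained by testing \eqref{eq:pde_h} with a mollified Heaviside; this is the concrete form of the ``$H^2$ in a stronger norm'' step you allude to.
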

\begin{proof}
We observe that the local averaging of $w$ and $u$ in the lower-order term in \eqref{eq:pde_h} satisfies
Galerkin-type orthogonality properties. Specifically, for given $\phi$, $\psi$, $\theta \in H$, we have
\begin{gather}\label{eq:orthogonality}
\int_0^1 (\phi - P_h \phi)(P_h \psi)(P_h \theta) 
= \sum_{i=1}^{N_h} 
\left(\frac{1}{|Q^i_h|} \int_{Q^i_h} \psi\right) 
\left( \frac{1}{|Q^i_h|} \int_{Q^i_h}\theta\right)
\underbrace{\int_{Q^i_h} \phi - P_h \phi}_{= 0}
= 0.
\end{gather}
Thus, we use an Aubin--Nitsche duality argument in order to obtain an estimate on the right-hand side of
\begin{gather}\label{eq:dual_norm}
\|u - u_h\|_H = \sup_{g \neq 0} \frac{(u - u_h, g)_H}{\|g\|_H}.
\end{gather}
To this end, we consider the solution $p \in U$ to the adjoint PDE
\begin{gather}\label{eq:orthogonality_adjoint}
 \int_0^1 \nabla p \nabla v + \int_0^1 p w v = \int_0^1 g v\quad\text{ for all } v \in U
\end{gather}
for arbitrary $g \in H$, which has the same properties as \eqref{eq:pde}
(the only difference is the source term $g$ instead of $f$). Moreover, we observe with the choice $v = p$
in \eqref{eq:pde} and \eqref{eq:pde_h} and the insertion of a suitable zero that
\begin{gather}\label{eq:uuh_diff_p_as_test}
\int_0^1 \nabla(u - u_h) \nabla p = -\int_0^1 (u - P_h u_h) w p - \int_0^1 (P_h u_h)(w - P_h w)p.
\end{gather}

We choose $v = u - u_h$ in \eqref{eq:orthogonality_adjoint} and apply \eqref{eq:uuh_diff_p_as_test} so that we obtain
\begin{align*}
(u - u_h, g)_H &= \int_0^1 \nabla p\nabla(u - u_h) + \int_0^1 p w (u - u_h)\\
&= \int_0^1 (u - u_h) w p - \int_0^1 (u - P_h u_h)  w p - \int_0^1 (P_h u_h)(w - P_h w)p.
\end{align*}
Combining the first two terms on the right-hand side and subtracting
$0 = \int_0^1 (P_h u_h)(w - P_h w)(P_h p)$,
which holds because of \eqref{eq:orthogonality}, we obtain
\begin{align*}
(u - u_h, g)_H
&= \int_0^1 (P_h u_h - u_h)  w p
 - \int_0^1 (P_h u_h) (w - P_h w)(p - P_h p).
\end{align*}
Inserting another zero and applying $0 = \int_0^1 (P_h u_h)(w - P_h w)(P_h p)$ again give
\begin{align*}
(u - u_h, g)_H 
&= \int_0^1 (P_h u_h - u_h) (w - P_h w) p
+ \int_0^1 (P_h u_h - u_h) (P_h w) (p - P_h p) \\
&\quad - \int_0^1 (P_h u_h) (w - P_h w)(p - P_h p).
\end{align*}
Now, we estimate the terms on the right-hand side and obtain with the same estimates
that have been used in the preceding subsections the following estimates:
\begin{align*}
\hspace{2em}&\hspace{-2em}\int_0^1 (P_h u_h - u_h) (w - P_h w) p\\
&\le h \pi \|\nabla u_h\|_H \|w - P_hw\|_H \|p\|_{L^\infty} && \text{\scriptsize H\"older, \eqref{eq:proj_H_estimate}}\\
&\le h\frac{\pi}{c_2(w_\ell,w_u)}\|f\|_H |w_u - w_\ell|^{\frac{1}{2}}\|w - P_hw\|_{L^1}^{\frac{1}{2}} \frac{1}{2 c_1(w_\ell)}\|g\|_H
 && \text{\scriptsize H\"older, \eqref{eq:Linfbound}, \eqref{eq:H10bound_locavg} }\\
&\le \frac{\pi|w_u - w_\ell|^{\frac{1}{2}}}{2 c_1(w_\ell)c_2(w_\ell,w_u)} \TV(w)^{\frac{1}{2}}\|f\|_H h^{\frac{3}{2}} \|g\|_H,
&&\text{\scriptsize \eqref{eq:proj_L1_estimate}}
\end{align*}
\begin{align*}
\int_0^1 (P_h u_h - u_h) (P_h w) (p - P_h p) 
&\le \frac{\pi^2 \max\{|w_u|,|w_\ell|\}}{c_1(w_\ell) c_2(w_\ell,w_u)} \|f\|_H h^2 \|g\|_H,
&&\text{\scriptsize  H\"older, \eqref{eq:Linfbound}, \eqref{eq:proj_H_estimate}, \eqref{eq:H10bound_locavg}}
\end{align*}
and
\begin{align*}
\hspace{2em}&\hspace{-2em}\int_0^1 (P_h u_h) (w - P_h w)(p - P_h p)\\
&\le \frac{\pi|w_u - w_\ell|^{\frac{1}{2}}}{2 c_1(w_\ell)c_2(w_\ell,w_u)} \TV(w)^{\frac{1}{2}}\|f\|_H h^{\frac{3}{2}} \|g\|_H.
&&\text{\scriptsize  H\"older, \eqref{eq:Linfbound}, \eqref{eq:proj_H_estimate}, \eqref{eq:proj_L1_estimate}, \eqref{eq:H10bound_locavg}}
\end{align*}
Combining these considerations, we obtain from \eqref{eq:dual_norm}
\[ 
\|u - u_h\|_H
\le \underbrace{\frac{\pi|w_u - w_\ell|^{\frac{1}{2}}}{c_1(w_\ell)c_2(w_\ell,w_u)} \TV(w)^{\frac{1}{2}} \|f\|_H}_{\eqqcolon\,C_{3/2}^a(w_\ell,w_u,f,w)} h^{\frac{3}{2}}
 + \underbrace{\frac{\pi^2 \max\{|w_u|,|w_\ell|\}}{c_1(w_\ell) c_2(w_\ell,w_u)} \|f\|_H}_{\eqqcolon\,C_{3/2}^b(w_\ell,w_u,f)} h^2,
\]
which is the first of the claimed estimates.

We next prove the second estimate. Because we know that $u$, $u_h$, $p$ are also $H^2(0,1)$-functions
(see again Theorem 9.53 in \cite{renardy2006introduction}), their derivatives are uniformly bounded, and we can estimate them
pointwise. Let $(\eta_\varepsilon)_\varepsilon$ be a family of standard mollifiers. For $y \in (0,1)$, we define the antiderivative 
$N_\varepsilon^y(x) \coloneqq \int_{-\infty}^x \eta_\varepsilon(z - y) \dd z$. We test \eqref{eq:pde_h} with $N_\varepsilon^y$
and deduce
\begin{align*}
\int_0^1 \nabla u_h (x) \eta_\varepsilon(x - y) \dd x
&= \int_0^1 f(x) N_\varepsilon^y(x)\dd x -\int_0^1 (P_hu_h)(x) (P_hw)(x) N_\varepsilon^y(x)\dd x \\
&\le \int_0^1   |f(x) - (P_hu_h)(x) (P_hw)(x)| \int_{-\infty}^\infty \eta_\varepsilon(x - y)  \dd y \dd x \\
&= \|f - (P_hu_h) (P_hw) \|_{L^1}.
\end{align*}
Driving $\varepsilon \searrow 0$ and supremizing over the left-hand side, we obtain
\[ \|\nabla u_h\|_{L^\infty} \le \|f - (P_hu_h) (P_hw) \|_{L^1}. \]
An analogous argument and applications of the triangle inequality,
H\"older's inequality, \eqref{eq:H10embedL2}, and \eqref{eq:H10bound} give
\[ \|\nabla p\|_{L^\infty} \le \|g - p w\|_{L^1} \le \left(1 + \frac{\max\{|w_\ell|,|w_u|\}}{\pi c_1(w_\ell)}\right)\|g\|_H .
\]
We also obtain for $\phi \in W^{1,\infty}(0,1)$ that
\[ \|\phi - P_h \phi\|_{L^\infty} \le \frac{1}{2} h \|\nabla \phi\|_{L^\infty}, \]
where $\frac{1}{2}$ is the Wirtinger--Sobolev constant that can be found in (13)
in \cite{schmidt1940ungleichung} (choose $a = \infty$, $b = \infty$, $s = 1$ therein).

Then, using H\"{o}lder's inequality with different exponents, we can estimate the two $h^{\frac{3}{2}}$-terms similarly as above but we replace the H\"older conjugates
$(2,2)$ by $(1,\infty)$ in order to obtain $h^2$-terms, specifically
\begin{align*}
\int_0^1 (P_h u_h - u_h) (w - P_h w) p
&\le \frac{1}{2} h \|\nabla u_h\|_{L^\infty} \|w - P_hw\|_{L^1} \|p\|_{L^\infty}\\
&\le \frac{1}{4 c_1(w_\ell) c_2(w_\ell, w_u)} \TV(w) \|f - (P_hu_h) (P_hw) \|_{L^1} h^2 \|g\|_H 
\end{align*}
and
\begin{align*}
\int_0^1 (P_h u_h) (w - P_h w)(p - P_h p)
&\le \frac{c_1(w_\ell) + \max\{|w_\ell|,|w_u|\}\pi^{-1}}{4c_1(w_\ell)c_2(w_\ell,w_u))}\TV(w)\|f\|_H h^2\|g\|_H .
\end{align*}
In summary, we obtain
\begin{align*}
\|u - u_h\|_H &\le C_2(w_\ell,w_u,f,w,u_h)h^2 
\end{align*}
with
\begin{multline}\label{eq:C2}
C_2(w_\ell,w_u,f,w,u_h) \coloneqq \frac{1}{4c_1(w_\ell) c_2(w_\ell,w_u)}
\Big( 4\pi^2 \max\{|w_u|,|w_\ell|\} \|f\|_H +\\
 \TV(w) \|f - (P_hu_h) (P_hw) \|_{L^1}
 + \big(c_1(w_\ell) + \max\{|w_\ell|,|w_u|\}\pi^{-1}\big)\TV(w)\|f\|_H \Big),
\end{multline}
which is the second estimate.
\end{proof}
We now provide a lower bound on $m_{\eqref{eq:ocp}}$ in terms of $h^2$
for our guiding example.
\begin{lemma}\label{lem:improved_approx_lb_on_ocp}
Let $j_0 \ge 0$ be a lower bound on $j(\bar{u})$ if $(\bar{u},\bar{w})$ minimizes \eqref{eq:ocp}.
Let $\hat{w}\in W$ be feasible for \eqref{eq:ocp}. Then
\begin{gather}
m_{\eqref{eq:ocp}} \ge m_{\eqref{eq:mcchh}} - c_{quad}(w_\ell, w_u, f, \hat{w}, j_0, u_{\ell,h},u_{u,h})h^2\label{eq:quad_bound}.
\end{gather}
\end{lemma}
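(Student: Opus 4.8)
The plan is to combine the approximate lower bound of \cref{thm:mcchh_approx_lb_on_ocp} with the quadratic estimate \eqref{eq:uuh_quadratic} of \cref{lem:u_uh_estimate_H} and then to replace the minimizer-dependent constant $C_2$ by an explicit one. First I would note that in the present example $L_w = 0$ (indeed $j(u,w) = j(u)$ does not depend on $w$), so the estimate chain culminating in \eqref{eq:mcchh_ocp_inequality} in the proof of \cref{thm:mcchh_approx_lb_on_ocp} simplifies, and, because $j$ depends on $u$ only through $\|u - u_d\|_H$, the step $j(\bar u_h) \le j(\bar u) + L_u\|\bar u - \bar u_h\|_U$ there may be sharpened to $j(\bar u_h) \le j(\bar u) + L_u\|\bar u - \bar u_h\|_H$ with $L_u \coloneqq r_u + \|u_d\|_H$, where $r_u$ is an $H$-bound valid for $\bar u$, for $\bar u_h$, and for all states feasible in \eqref{eq:mcchh}; by \eqref{eq:Linfbound} and \eqref{eq:Linfbound_locavg} (and $|\Omega| = 1$) such an $r_u$ depends only on $w_\ell$, $w_u$, $f$. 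Combining these observations with the optimality of $(\bar u, \bar w)$ for \eqref{eq:ocp} gives $m_{\eqref{eq:ocp}} \ge m_{\eqref{eq:mcchh}} - L_u\|\bar u - \bar u_h\|_H$, and then \eqref{eq:uuh_quadratic} yields $m_{\eqref{eq:ocp}} \ge m_{\eqref{eq:mcchh}} - L_u C_2(w_\ell, w_u, f, \bar w, \bar u_h) h^2$, where $\bar u_h$ solves \eqref{eq:pde_h} for $\bar w$.

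The main remaining task --- and the only nonroutine point --- is to bound $C_2(w_\ell, w_u, f, \bar w, \bar u_h)$ from \eqref{eq:C2} by a constant that depends only on the data listed in the statement. Two quantities in \eqref{eq:C2} still involve the unknown minimizer: $\TV(\bar w)$ and $\|f - (P_h\bar u_h)(P_h\bar w)\|_{L^1}$. For the first, I would test optimality of $(\bar u, \bar w)$ against the feasible comparison pair $(\hat u, \hat w)$, where $\hat u$ is the (unique, by the Lax--Milgram argument of \cref{sec:pdesetting}) solution of \eqref{eq:pde} for $\hat w$: from $j(\bar u) + \alpha\TV(\bar w) \le j(\hat u) + \alpha\TV(\hat w)$ and $j(\bar u) \ge j_0$ one obtains $\TV(\bar w) \le \tfrac{1}{\alpha}\big(j(\hat u) + \alpha\TV(\hat w) - j_0\big) \eqqcolon T$, a quantity determined by $\hat w$, $f$, $u_d$, $\alpha$, $j_0$. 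For the second, since $\bar w \in C \cap [w_\ell, w_u]$ (feasibility together with \cref{ass:mcc}) and $\bar u_h$ solves \eqref{eq:pde_h} for $\bar w$, \cref{ass:mcch} \ref{itm:richness_of_feasible_set} gives the pointwise bounds $u_{\ell,h} \le P_h\bar u_h \le u_{u,h}$; with $\|P_h\bar w\|_{L^\infty} \le \max\{|w_\ell|, |w_u|\}$, $|\Omega| = 1$, and $\|f\|_{L^1} \le \|f\|_H$ this yields $\|f - (P_h\bar u_h)(P_h\bar w)\|_{L^1} \le \|f\|_H + \max\{\|u_{\ell,h}\|_{L^\infty}, \|u_{u,h}\|_{L^\infty}\}\,\max\{|w_\ell|, |w_u|\} \eqqcolon M$. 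This is precisely where the dependence of $c_{quad}$ on $u_{\ell,h}$, $u_{u,h}$ enters; one could instead invoke the weaker a priori bound \eqref{eq:Linfbound_locavg} for $\|P_h\bar u_h\|_{L^\infty}$, but the box bounds (which may have been tightened by the OBBT of \cref{sec:obbt}) give the sharper constant.

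Substituting $T$ for $\TV(\bar w)$ and $M$ for the $L^1$-residual in \eqref{eq:C2}, multiplying by $L_u$, and collecting terms produces the explicit constant
\begin{multline*}
c_{quad}(w_\ell, w_u, f, \hat w, j_0, u_{\ell,h}, u_{u,h}) \coloneqq \frac{L_u}{4 c_1(w_\ell) c_2(w_\ell, w_u)}\Big( 4\pi^2 \max\{|w_u|, |w_\ell|\}\|f\|_H \\
+ T M + \big(c_1(w_\ell) + \max\{|w_\ell|, |w_u|\}\pi^{-1}\big) T\|f\|_H \Big),
\end{multline*}
which depends only on the quantities named in the statement (besides the fixed data $\alpha$, $u_d$, $\Omega$) and for which \eqref{eq:quad_bound} holds. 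I expect all of the genuine content to lie in the a priori control of $\TV(\bar w)$ and of the $L^1$-residual just described; once those are in hand, the remainder is the bookkeeping of inserting \eqref{eq:uuh_quadratic} into the ($H$-norm version of the) bound from \cref{thm:mcchh_approx_lb_on_ocp}.
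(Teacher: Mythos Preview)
Your proposal is correct and follows essentially the same route as the paper: invoke \cref{thm:mcchh_approx_lb_on_ocp} (with $L_w=0$ and the $H$-norm in place of the $U$-norm), insert the quadratic estimate \eqref{eq:uuh_quadratic}, and then eliminate the minimizer-dependent quantities $\TV(\bar w)$ and $\|f-(P_h\bar u_h)(P_h\bar w)\|_{L^1}$ via the comparison with $\hat w$ and the box bounds on $P_h\bar u_h$. The only difference is that the paper estimates $L_u$ and the $L^1$-residual pointwise (via $\tilde d_u(x)\coloneqq\max\{|u_u(x)-u_d(x)|,|u_d(x)-u_\ell(x)|\}$ and $\tilde d_f(x)\coloneqq\max_{a\in\{u_\ell^i,u_u^i\},\,b\in\{w_\ell,w_u\}}|f(x)-ab|$), yielding a sharper $c_{quad}$ than your global triangle-inequality bounds $r_u+\|u_d\|_H$ and $M$; this sharpening matters for the numerical tables but not for the validity of \eqref{eq:quad_bound}.
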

\begin{proof}
For the objective $j(u) = \tfrac{1}{2}\|u - u_d\|_H^2$, we obtain $\nabla_u j(u) = u - u_d$.
Since we have pointwise lower and upper bounds $u_\ell$ and $u_u$ on $u$, for example,
from \eqref{eq:Linfbound_locavg}, we can overestimate the Lipschitz constant of $j$ on the
feasible set with respect to the $L^2$-norm by setting
$\tilde{d}_u(x) \coloneqq \max\{|u_u(x) - u_d(x)|,|u_d(x) - u_\ell(x)|\}$ for a.a.\ $x \in (0,1)$.
Then the Lipschitz constant can be estimated as $L_u \le \|\tilde{d}_u\|_{H}$.

Using \eqref{eq:uuh_quadratic} from \cref{lem:u_uh_estimate_H}, we obtain
\begin{align*}
m_{\eqref{eq:ocp}} &\ge m_{\eqref{eq:mcchh}} 
- \|\tilde{d}_u\|_H C_2(w_\ell,w_u,f,\bar{w},\bar{u}_h) h^2.
\end{align*}
Inspecting $C_2(w_\ell,w_u,f,\bar{w},\bar{u}_h)$ in  \cref{lem:u_uh_estimate_H},
we observe that we can overestimate $\TV(\bar{w}) \le
\frac{j(\hat{u}) + \alpha \TV(\hat{w}) - j_0}{\alpha}$,
where $\hat{u}$ denotes the unique solution
to \eqref{eq:pde} for $\hat{w}$.
Moreover, using available bounds on $P_h\bar{u}_h$, we can overestimate
$|f(x) - (P_h\bar{u}_h)(x) (P_h\bar{w})(x)|$
pointwise by setting
\[ \tilde{d}_f(x) \coloneqq \max\{|f(x) - u_\ell^i w_\ell|,|f(x) - u_\ell^i w_u|,|f(x) - u_u^i w_\ell|,|f(x) - u_u^i w_u|\} \]
for a.a.\ $x \in Q_i$ for all $Q_i \in \calQ_{h}$.
Thus we obtain the constant $c_{quad}(w_\ell, w_u, f, \hat{w}, j_0, u_{\ell,h},u_{u,h})$
by replacing $\|f - (P_h\bar{u}_h)(P_h\bar{w})\|_{L^1}$ by $\|\tilde{d}_f\|_{L^1}$
and $\TV(\bar{w})$ by $\frac{j(\hat{u}) + \alpha \TV(\hat{w}) - j_0}{\alpha}$
in \eqref{eq:C2} and multiplying the resulting constant by $\|\tilde{d}_u\|_H$.
\end{proof}
\begin{remark}
We note that the estimate $L_u \le \|\tilde{d}_u\|_{H}$ can be sharpened by using a posteriori information from the
approximate McCormick relaxations in order to bootstrap improved pointwise bounds $u_\ell$ and $u_u$ on $u$.
\end{remark}

\subsection{Properties of the control-to-state operator for (\ref{eq:pde_h})}\label{sec:control_to_state_operator}
We denote the nonlinear control-to-state operator that maps $w \in C$ to the solution
of the PDE \eqref{eq:pde_h} with the locally averaged state in the bilinearity as
$S : C \to U$. It is well defined and uniformly bounded due to the considerations
in \cref{sec:pdesetting,sec:verification_of_assumptions}. We briefly state Lipschitz continuity
and differentiability properties of the control-to-state operator so that we
verify the assumptions in \cite{leyffer2022sequential}. Consequently, the algorithmic
strategy from \cite{leyffer2022sequential,manns2023on} can be used to obtain stationary 
feasible (primal points) when additional integrality restrictions are
imposed on $w$ in \eqref{eq:ocp} while the analysis carried out above
allows us to obtain valid (approximate)
lower bounds. While the arguments are standard by following, for example, the
considerations on control-to-state operators in \cite{troltzsch2010optimal}, we provide
these proofs here because we were not able to find good references for the specific
Lipschitz, embedding, ...\ constants of the example in this article. We believe, however, that
they help  make informed assessments of the possible quality and performance of
our algorithmic approaches, although some of them may not be tight.
\begin{lemma}\label{lem:SLipschitzL1}
$S$ is Lipschitz continuous on the set $C$ as a function $L^1(0,1) \to U$.
\end{lemma}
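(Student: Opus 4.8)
The plan is to run the standard energy estimate for the difference of two solutions of \eqref{eq:pde_h}, testing the subtracted weak formulations with the difference of the states, and to exploit that in one space dimension $U = H^1_0(0,1)$ embeds continuously into $L^\infty(0,1)$; this is what makes the $L^1$-norm of the control difference appear on the right-hand side.

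First I would fix $w_1, w_2 \in C$ and set $u_1 \coloneqq S(w_1)$ and $u_2 \coloneqq S(w_2)$, which are well defined and satisfy the uniform bound $\|u_k\|_{L^\infty} \le \frac{1}{2 c_2(w_\ell,w_u)}\|f\|_H$ by \eqref{eq:Linfbound_locavg}. Subtracting the weak form \eqref{eq:pde_h} for $w_2$ from the one for $w_1$ and using the linearity of $P_h$ to split
\[ (P_h w_1)(P_h u_1) - (P_h w_2)(P_h u_2) = (P_h w_1)\, P_h(u_1 - u_2) + P_h(w_1 - w_2)\, (P_h u_2), \]
I obtain, for all $v \in U$,
\[ \int_0^1 \nabla(u_1 - u_2)\nabla v + \int_0^1 (P_h w_1)\, P_h(u_1 - u_2)\, v = -\int_0^1 P_h(w_1 - w_2)\,(P_h u_2)\, v. \]
Choosing $v = u_1 - u_2$, the left-hand side equals $B_h(u_1 - u_2, u_1 - u_2)$ with $w = w_1$, so the coercivity estimate established in \cref{sec:verification_of_assumptions} (which only uses $\max\{|w_\ell|,|w_u|\}$ and \eqref{eq:H10embedL2}) bounds it below by $c_2(w_\ell,w_u)\|\nabla(u_1 - u_2)\|_H^2$.

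For the right-hand side I would apply H\"older's inequality with exponents $(1,\infty,\infty)$. Since $P_h$ is nonexpansive on $L^1(0,1)$ and on $L^\infty(0,1)$, one has $\|P_h(w_1 - w_2)\|_{L^1} \le \|w_1 - w_2\|_{L^1}$ and $\|P_h u_2\|_{L^\infty} \le \frac{1}{2 c_2(w_\ell,w_u)}\|f\|_H$; moreover $\|u_1 - u_2\|_{L^\infty} \le \frac{1}{2}\|\nabla(u_1 - u_2)\|_H$ by \eqref{eq:H10embedLinf}. Combining these, dividing by $\|\nabla(u_1 - u_2)\|_H$, and passing from $\|\nabla\cdot\|_H$ to the full $U$-norm via the Poincar\'e inequality \eqref{eq:H10embedL2} yields the asserted Lipschitz bound with an explicit constant of the form $c\,\|f\|_H\, c_2(w_\ell,w_u)^{-2}$.

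I do not expect a genuine obstacle; the one point needing care is the choice of H\"older exponents, because the $L^1$-dependence on the control is available only after placing both remaining factors in $L^\infty(0,1)$ --- the factor $P_h u_2$ via the uniform a priori bound \eqref{eq:Linfbound_locavg} and the factor $u_1 - u_2$ via the one-dimensional embedding $U \hookrightarrow L^\infty(0,1)$ in \eqref{eq:H10embedLinf}. This is precisely where the restriction to $d = 1$ is used; in higher dimensions one would only obtain Lipschitz continuity with respect to a stronger norm on the control.
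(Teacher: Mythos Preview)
Your proposal is correct and follows essentially the same approach as the paper: subtract the two weak formulations of \eqref{eq:pde_h}, test with $u_1 - u_2$, split the bilinear term, absorb the $(P_h w)\,P_h(u_1 - u_2)$ piece via coercivity, and estimate the remaining term using H\"older with exponents $(1,\infty,\infty)$ together with the one-dimensional embedding \eqref{eq:H10embedLinf} and the a priori bound \eqref{eq:Linfbound_locavg}. The only cosmetic differences are that the paper groups the split as $P_h(w_1-w_2)(P_h u_1) + (P_h w_2)P_h(u_1-u_2)$ (with the roles of the indices swapped) and redoes the coercivity computation inline rather than citing $B_h$; both routes yield the identical Lipschitz constant $L_S = \|f\|_H/(4c_2(w_\ell,w_u)^2)$.
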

\begin{proof}
Let $w_1$, $w_2 \in C$. Let $u_1 = S(w_1)$, $u_2 = S(w_2)$. We subtract the
weak two formulations, insert a suitable zero, and deduce
by means of H\"{o}lder's inequality the triangle inequality,
\eqref{eq:H10embedL2},
\eqref{eq:H10embedLinf}, and the nonexpansiveness of $P_h$:
\begin{align*}
\|\nabla(u_1 - u_2)\|_{H}^2
&= \left( (P_h w_1) (P_h u_1) - (P_h w_2)(P_hu_1), u_1 - u_2\right)_{H}\\
&\hphantom{=} + \left( (P_h w_2)(P_h u_1) - (P_h w_2)(P_h u_2), u_1 - u_2\right)_{H}\\
&\le \|u_1 - u_2\|_{L^\infty}\|u_1\|_{L^\infty}\|w_1 - w_2\|_{L^1} 
+ \max\{|w_\ell|,|w_u|\}\|u_1 - u_2\|_{H}^2 \\
&\le \left(
\frac{\|f\|_H}{4 c_2(w_\ell,w_u)}\|\nabla(u_1 - u_2)\|_{H}\|w_1 - w_2\|_{L^1} 
+ 
\frac{\max\{|w_\ell|,|w_u|\}}{\pi^2}\|\nabla(u_1 - u_2)\|_{H}^2 \right).
\end{align*}
An equivalent reformulation of this estimate gives
\begin{gather}\label{eq:LipschitzS}
 \|\nabla(u_1 - u_2)\|_{H} \le
\underbrace{\frac{\|f\|_{H} }{4 c_2(w_\ell,w_u)^2}}_{\eqqcolon L_S} \|w_1 - w_2\|_{L^1}.
\end{gather}
\end{proof}
Let $u$ solve \eqref{eq:pde_h} for $w \in C$.
For a given direction $s \in L^1(0,1)$ so that $w + s \in C$ and with $u = S(w)$,
we denote the (weak) solution to
\[ \int_0^1 \nabla q \nabla v + \int_0^1 (P_h q)(P_h w) v = -\int_0^1 (P_h u) (P_h s) v
   \quad \text{ for all } v \in U
\]
by $q$. We obtain that $q \in U$ is uniquely defined 
because $s \in C - w$ implies $L^\infty(0,1)$-bounds on $s$,
namely, $w_\ell - w_u \le s \le w_u - w_\ell$.
This gives $\|(P_hs)(P_hu)\|_H \le \|w_u - w_\ell|\|u\|_H$ and
a similar analysis as for \eqref{eq:pde_h} applies.
\begin{lemma}\label{lem:1st_der}
The operator $S : C \to U$ is Fr\'{e}chet differentiable with
respect to $L^1(0,1)$ in $C$. Let $q$ be as above for a given $s$
such that $w + s \in C$. Then $S'(w)s = q$.
For a fixed $s \in C$, the mapping $w \mapsto S'(w)s$ is
Lipschitz continuous in $C$ with respect to $L^1(0,1)$.
\end{lemma}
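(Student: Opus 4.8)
The plan is to run the standard implicit-function-type argument for control-to-state operators of semilinear PDEs (as in \cite{troltzsch2010optimal}), using throughout that the linearized bilinear form is coercive on $U$ with the \emph{same} uniform constant $c_2(w_\ell,w_u)$ as in \cref{sec:verification_of_assumptions}, that $P_h$ is nonexpansive on $L^1(0,1)$, $H$, and $L^\infty(0,1)$, and that in one dimension $U = H^1_0(0,1) \hookrightarrow L^\infty(0,1)$ via \eqref{eq:H10embedLinf}, so that $L^1(0,1) \hookrightarrow U^*$.

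\emph{Differentiability and identification of the derivative.} Fix $w \in C$ and set $u = S(w)$. For $s$ with $w + s \in C$ let $u_s = S(w+s)$ and let $q$ be the solution of the linearized equation displayed before the lemma. This linearized equation is a well-posed linear PDE in $q$ whose right-hand side $-(P_h u)(P_h s) \in L^1(0,1) \hookrightarrow U^*$ depends linearly and boundedly on $s$, so $s \mapsto q$ is (the restriction to admissible directions of) a bounded linear operator $L^1(0,1) \to U$, with $\|\nabla q\|_H \le (2 c_2(w_\ell,w_u))^{-1}\|u\|_{L^\infty}\|s\|_{L^1}$ by Lax--Milgram and \eqref{eq:H10embedLinf}. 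Subtracting the weak forms of \eqref{eq:pde_h} for $w+s$ and for $w$, expanding $(P_h(w+s))(P_h u_s) = (P_h w)(P_h u_s) + (P_h s)(P_h u_s)$, and subtracting the equation for $q$, I obtain that the remainder $r \coloneqq u_s - u - q$ solves
\[
\int_0^1 \nabla r\,\nabla v + \int_0^1 (P_h w)(P_h r)\, v = -\int_0^1 (P_h s)\,\bigl(P_h(u_s - u)\bigr)\, v \qquad \text{for all } v \in U .
\]
The left-hand side is coercive (coefficient $P_h w$), and the right-hand side is bounded in $U^*$ by the H\"older pairing $\|P_h s\|_{L^1}\,\|P_h(u_s-u)\|_{L^\infty}\,\|v\|_{L^\infty}$; using nonexpansiveness of $P_h$, \eqref{eq:H10embedLinf}, and the $L^1$-Lipschitz estimate \eqref{eq:LipschitzS} from \cref{lem:SLipschitzL1} to bound $\|u_s - u\|_{L^\infty} \le \tfrac12\|\nabla(u_s-u)\|_H \le \tfrac12 L_S \|s\|_{L^1}$, I arrive at $\|\nabla r\|_H \le C\,\|s\|_{L^1}^2 = o(\|s\|_{L^1})$. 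Hence $S$ is Fr\'echet differentiable (relative to the convex set $C$) with $S'(w)s = q$; since $q$ is linear in $s$, the same estimate applied to $ts$ also yields existence of the directional derivatives and the uniformity needed.

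\emph{Lipschitz continuity of $w \mapsto S'(w)s$.} Fix $s \in C$, take $w_1, w_2 \in C$, and set $u_i = S(w_i)$, $q_i = S'(w_i)s$. Subtracting the two linearized equations and writing $(P_h q_1)(P_h w_1) - (P_h q_2)(P_h w_2) = \bigl(P_h(q_1-q_2)\bigr)(P_h w_1) + (P_h q_2)\bigl(P_h(w_1-w_2)\bigr)$ shows that $q_1 - q_2$ solves a linearized equation (coefficient $P_h w_1$, hence coercive with constant $c_2(w_\ell,w_u)$) whose right-hand side is $-(P_h q_2)\bigl(P_h(w_1-w_2)\bigr) - \bigl(P_h(u_1-u_2)\bigr)(P_h s)$. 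I then bound both terms in $U^*$: the first using the uniform bound $\|q_2\|_{L^\infty} \le \tfrac12\|\nabla q_2\|_H \lesssim \|s\|_{L^1}$ together with $\|P_h(w_1-w_2)\|_{L^1}\le\|w_1-w_2\|_{L^1}$, and the second using $\|u_1-u_2\|_{L^\infty}\le \tfrac12 L_S\|w_1-w_2\|_{L^1}$ from \cref{lem:SLipschitzL1}. Dividing by $c_2(w_\ell,w_u)$ yields $\|\nabla(q_1-q_2)\|_H \le L_{S'}(s)\,\|w_1-w_2\|_{L^1}$, which is the claim.

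\emph{Main obstacle.} The estimates themselves are routine once the correct equations for $r$ and $q_1-q_2$ are assembled; the two points that require care are (i) that the linearized bilinear form $a(q,v) = \int_0^1 \nabla q\,\nabla v + \int_0^1 (P_h q)(P_h w)\, v$ is \emph{not} symmetric (the local averaging acts on $q$ but not on the test function $v$), so one must invoke Lax--Milgram via boundedness plus coercivity rather than a symmetric energy identity, and (ii) choosing the H\"older pairing in the remainder so that the bound comes out quadratic in $\|s\|_{L^1}$ --- pairing $P_h s$ in $L^1$ against $v$ in $L^\infty$ (via $U\hookrightarrow L^\infty$) and pushing $P_h(u_s-u)$ into $L^\infty$ controlled through \cref{lem:SLipschitzL1} --- since a less careful split only gives order $\|s\|_{L^1}^{3/2}$, which still suffices but is untidy. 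One should also record that ``Fr\'echet differentiable in $C$'' is meant relative to the convex admissible set, as $C$ has empty interior in $L^1(0,1)$.
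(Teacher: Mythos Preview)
Your proposal is correct and follows essentially the same route as the paper's proof: you derive the same equation for the remainder $r = u_s - u - q$, test with $r$, absorb the lower-order term via the uniform coercivity constant $c_2(w_\ell,w_u)$, and use the $L^1$-Lipschitz bound \eqref{eq:LipschitzS} on $u_s - u$ together with the embedding $U \hookrightarrow L^\infty$ to obtain the quadratic remainder estimate; the Lipschitz continuity of $w \mapsto S'(w)s$ is handled by the identical splitting $(P_h q_1)(P_h w_1) - (P_h q_2)(P_h w_2) = (P_h r)(P_h w_1) + (P_h q_2)P_h(w_1 - w_2)$. Your explicit remarks on the asymmetry of the linearized form and on Fr\'echet differentiability being understood relative to the convex set $C$ are correct clarifications that the paper leaves implicit, but they do not constitute a different strategy.
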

\begin{proof}
Clearly, the mapping $s \mapsto q$ is a bounded linear operator. We need to show
\[
\lim_{\|s\|_{L^1} \underset{s \in C}\to 0} \frac{\|S(w + s) + S(w) - q\|_{U}}{\|s\|_{L^1}} \to 0.
\]
Let $u_s = S(w + s)$, $u = S(w)$. Let $d \coloneqq u_s - u - q$. Then, we deduce
with the considerations above and \cref{lem:SLipschitzL1} and the Lipschitz constant
$L_S$ from \eqref{eq:LipschitzS} that
\begin{align*}
\|\nabla d\|_H^2 &= \big(P_h(u - u_s)(P_hs), d\big)_H - \big((P_hd)(P_hw),d\big)_H \\
&\le \frac{L_S}{2}\|\nabla d\|_{H}\|s\|_{L^1}\|s\|_{L^1}
+ \frac{\max\{|w_\ell|,|w_u|\}}{\pi^2}\|\nabla d\|_{H}^2,
\end{align*}
which yields
\[ \|\nabla d\|_H \le \frac{L_S}{2 c_2(w_\ell,w_u)}\|s\|_{L^1}^2.
\]
Since $\|\cdot\|_U$ is equivalent to $\|\nabla \cdot\|_H$ with the 
embedding \eqref{eq:H10embedL2}, this proves the claim.

For the Lipschitz continuity, let $w_1$, $w_2 \in C$,
$u_1 = S(w_1)$, $u_2= S(w_2)$, and $q_1 = S'(w_1)s$, $q = S'(w_2)s$.
Then, we obtain with $r = q_1 - q_2$ that
\begin{align*}
\|\nabla r\|_H^2
&= \big(P_h(u_2 - u_1)(P_hs),r\big)_H
 - \big((P_hr)(P_hw), r\big)_{H}
 - \big((P_hq_2) P_h(w_2 - w_1), r\big)_H\\
&\le \frac{L_S}{2} \|w_1 - w_2\|_{L^1}\|s\|_{L^1} \|\nabla r\|_H
+ \frac{\max\{|w_\ell|,|w_u|\}}{\pi^2}\|\nabla r\|_{H}^2
+ \frac{1}{2}\|\nabla q_2\|_{H}\|w_1 - w_2\|_{L^1}
\|\nabla r\|_{H}.
\end{align*}
With the arguments from the preceding subsections, we obtain
\begin{align*}
\|\nabla q_2\|_{H} &\le \frac{1}{c_2(w_\ell,w_u)}\|s\|_{L^\infty}\|u_2\|_H
\le \frac{|w_u - w_\ell|}{\pi c_2(w_\ell,w_u)^2}\|f\|_H ,
\end{align*}
so that we obtain the estimate
\[
\|\nabla r\|_H 
\le \underbrace{\frac{|w_u - w_\ell|}{2c_2(w_\ell,w_u)}
\left(
L_S 
+ 
\frac{\|f\|_H}{\pi c_2(w_\ell,w_u)^2}
\right)}_{\eqqcolon L_{S'}}
\|w_1 - w_2\|_{L^1}. \]
\end{proof}
For given directions $\phi$, $\psi \in L^1(0,1)$ so that $w + \phi$,
$w + \psi \in C$, we denote the (weak) solution to
\begin{multline}
\int_0^1 \nabla \xi \nabla v + \int_0^1 (P_h\xi)(P_h w) v =\\
- \int_0^1 P_h(S'(w)\phi) (P_h\psi) v
- \int_0^1 P_h(S'(w)\psi) (P_h\phi) v
\quad \text{ for all } v \in U\label{eq:2nd_der} 
\end{multline}
by $\xi$. As above, we obtain that $\xi \in U$ is uniquely defined
because $\phi$, $\psi \in C - w$ imply
$L^\infty(0,1)$-bounds on $\phi$, $\psi$, that is, $w_\ell - w_u \le \phi \le w_u - w_\ell$ and $w_\ell - w_u \le \psi \le w_u - w_\ell$.
This implies bounds on the lower-order order terms and a similar
a similar analysis as for \eqref{eq:pde_h} applies.

\begin{proposition}
Let $\phi \in C - w$ be given. The operator $w \mapsto S'(w)\phi$ is 
continuously Fr\'{e}chet differentiable with respect to $L^1(0,1)$ in $C$.
Its derivative in direction $\psi \in C - w$ is given by
$S''(w)[\psi,\phi] = \xi$, where $\xi$ is the solution to \eqref{eq:2nd_der}
above. Under these assumptions, it holds that
\[ \|S''(w)[\psi,\phi]\|_H \le \kappa \|\psi\|_{L^1}\|\phi\|_{L^1} \]
for some $\kappa > 0$.
\end{proposition}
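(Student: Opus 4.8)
The plan is to mirror the proofs of \cref{lem:SLipschitzL1} and \cref{lem:1st_der}: first record an a priori estimate for $\xi$, then establish Fréchet differentiability via a remainder equation, and finally obtain continuity of the second derivative through a Lipschitz estimate in $w$. Throughout I would work with the seminorm $\|\nabla\cdot\|_H$, which is equivalent to $\|\cdot\|_U$ by \eqref{eq:H10embedL2}. As a preliminary observation, testing the defining equation of $S'(w)s$ with $v = S'(w)s$ and using coercivity, \eqref{eq:H10embedLinf}, the nonexpansiveness of $P_h$, and \eqref{eq:Linfbound_locavg} exactly as in the proof of \cref{lem:SLipschitzL1} yields $\|\nabla S'(w)s\|_H \le L_S\|s\|_{L^1}$ with $L_S$ from \eqref{eq:LipschitzS}; that is, $S'(w)$ inherits the Lipschitz constant of $S$.

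For the a priori bound I would test \eqref{eq:2nd_der} with $v = \xi$. The left-hand side is bounded below by $c_2(w_\ell,w_u)\|\nabla\xi\|_H^2$ as in the treatment of \eqref{eq:pde_h}, while the right-hand side is at most $\big(\|S'(w)\phi\|_{L^\infty}\|\psi\|_{L^1} + \|S'(w)\psi\|_{L^\infty}\|\phi\|_{L^1}\big)\|\xi\|_{L^\infty}$; applying the preliminary bound together with \eqref{eq:H10embedLinf} gives $\|\nabla\xi\|_H \le \tfrac{L_S}{2 c_2(w_\ell,w_u)}\|\phi\|_{L^1}\|\psi\|_{L^1}$ and hence, by \eqref{eq:H10embedL2}, $\|\xi\|_H \le \kappa\|\psi\|_{L^1}\|\phi\|_{L^1}$ with $\kappa = L_S/(2\pi c_2(w_\ell,w_u))$. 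This also shows that $\psi \mapsto \xi$ is a bounded linear operator, which is needed to identify it as $S''(w)[\cdot,\phi]$.

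The core step is Fréchet differentiability. For fixed $\phi$ and an increment $\psi$ with $w+\psi \in C$, write $q_1 := S'(w+\psi)\phi$, $q := S'(w)\phi$, $u_1 := S(w+\psi)$, $u := S(w)$ and set $r := q_1 - q - \xi$. Subtracting the weak forms of the equations for $q_1$, $q$, and \eqref{eq:2nd_der}, and using the algebraic identity $(P_hq_1)(P_h(w+\psi)) - (P_hq)(P_hw) = P_h(q_1-q)(P_hw) + (P_hq_1)(P_h\psi)$, one checks that $r$ solves an equation of the type \eqref{eq:pde_h} (same bilinear form, zero boundary data) with right-hand side $-P_h(q_1-q)(P_h\psi) - P_h\!\big(u_1 - u - S'(w)\psi\big)(P_h\phi)$. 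Testing with $v = r$, using coercivity on the left, and bounding the two source terms by the first-order remainder estimate $\|\nabla(u_1 - u - S'(w)\psi)\|_H = O(\|\psi\|_{L^1}^2)$ and the Lipschitz bound $\|\nabla(q_1 - q)\|_H = O(\|\psi\|_{L^1})$ --- both from \cref{lem:1st_der} --- one arrives at $\|\nabla r\|_H = O(\|\psi\|_{L^1}^2)$, so the difference quotient tends to zero and $S''(w)[\psi,\phi] = \xi$. Continuity of $w \mapsto S''(w)[\cdot,\phi]$ then follows by the same device: for $w_1, w_2 \in C$ (with $\phi$ admissible at both base points), subtract the two instances of \eqref{eq:2nd_der}, split $(P_h\xi_1)(P_hw_1) - (P_h\xi_2)(P_hw_2) = P_h(\xi_1-\xi_2)(P_hw_1) + (P_h\xi_2)P_h(w_1-w_2)$, test with $v = \xi_1 - \xi_2$, and combine the a priori bound for $\xi_2$ with the Lipschitz continuity of $w \mapsto S'(w)\phi$ and $w \mapsto S'(w)\psi$ from \cref{lem:1st_der} to bound $\|\nabla(\xi_1-\xi_2)\|_H$ by a $\phi$-dependent constant times $\|w_1-w_2\|_{L^1}\|\psi\|_{L^1}$, which is Lipschitz (hence continuous) dependence in operator norm.

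I expect the only real obstacle to be the bookkeeping in the differentiability step, namely arranging the three subtracted weak forms so that the source term of the equation for $r$ decomposes exactly into the first-order remainder of $S$ (controlled quadratically in $\|\psi\|_{L^1}$ by \cref{lem:1st_der}) plus the increment $q_1 - q$ of $S'$ (controlled linearly by the same lemma); once this decomposition is identified, every remaining estimate is a routine repetition of the Hölder/coercivity arguments used in \cref{lem:SLipschitzL1} and \cref{lem:1st_der}.
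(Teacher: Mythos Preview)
Your proposal is correct and follows essentially the same route as the paper: derive the remainder equation for $r = S'(w+\psi)\phi - S'(w)\phi - \xi$, test it with $r$, and control the two source terms via the first-order remainder bound and the Lipschitz estimate for $S'$ from \cref{lem:1st_der}; the $\kappa$-bound is obtained by testing \eqref{eq:2nd_der} with $\xi$. The only minor differences are that you record a sharper preliminary estimate $\|\nabla S'(w)s\|_H \le L_S\|s\|_{L^1}$ (yielding a smaller $\kappa$ than the paper's $\|f\|_H/(2\pi c_2^3)$) and that you spell out the Lipschitz-continuity argument for $w\mapsto S''(w)[\cdot,\phi]$, which the paper merely asserts can be done ``with arguments similar to but more lengthy'' than those in \cref{lem:1st_der}.
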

\begin{proof}
For $\phi \in C - w$ and $\psi \in C - w$, let $q \coloneqq S'(w)\phi$,
$q_\psi \coloneqq S'(w + \psi)\phi$, $\tilde{q}_\psi \coloneqq S'(w)\psi$, 
$u_\psi = S(w + \psi)$, $u = S(w)$, $\xi$ be the solution to 
\eqref{eq:2nd_der}. Let $r \coloneqq q_\psi - q - \xi$. Then, we obtain
from H\"older's inequality and the embedding constants
\eqref{eq:H10embedL2}, \eqref{eq:H10embedLinf}
\begin{align*}
\|\nabla r\|_H^2 
&= \big(P_h(u + \tilde{q}_\psi - u_\psi)(P_h \phi),r\big)_H 
+ \big(P_h(q_\psi - q)(P_h \psi),r\big)_H
- \big((P_hr)(P_hw),r\big)_H\\
&\le \frac{1}{4}\|\nabla(u_\psi - u - \tilde{q}_\psi)\|_H \|\phi\|_{L^1} \|\nabla r\|_H
+ \frac{1}{4}\|\nabla(q_\psi - q)\|_H
 \|\psi\|_{L^1} \|\nabla r\|_H\\
&\quad + \frac{\max\{|w_\ell|,|w_u|\}}{\pi^2} \|\nabla r\|_H^2.
\end{align*}
Using the constants defined above and the estimates from the
proof of \cref{lem:1st_der}, we obtain
\[
4 c_2(w_\ell,w_u) \|\nabla r\|_H
\le \frac{L_S}{2 c_2(w_\ell,w_u)}\|\psi\|_{L^1}^2 \|\phi\|_{L^1}
+ L_{S'} \|\psi\|_{L^1}^2.
\]
Consequently, we obtain
\[
\lim_{\|\psi\|_{L^1} \underset{\psi \in C}\to 0} \frac{\|S(w + \psi) + S(w) - \xi\|_{U}}{\|\psi\|_{L^1}} \to 0,
\]
which proves the desired differentiability. 

Testing \eqref{eq:2nd_der} with $\xi = S''(w)[\psi,\phi]$ and using \eqref{eq:H10embedL2}
and similar computations as in the previous arguments, we obtain 
\begin{align*}
\|\nabla \xi\|_H^2 & \le \frac{\max\{|w_\ell|,|w_u|\}}{\pi^2}\|\nabla \xi\|_H^2
+\frac{\|f\|_H}{2 c_2(w_\ell,w_u)^2}\|\psi\|_{L^1}\|\phi\|_{L^1}\|\nabla \xi\|_H
\end{align*}
and in turn
\[ \|\xi\|_H \le \underbrace{\frac{\|f\|_H}{2\pi c_2(w_\ell,w_u)^3}}_{\eqqcolon \kappa}\|\psi\|_{L^1}\|\phi\|_{L^1}.
\]
The (Lipschitz) continuity of $w \mapsto S''(w)[\phi,\psi]$ can be shown with arguments similar to but more lengthy
than that for the Lipschitz continuity of the mapping $w \mapsto S'(w)s$ in \cref{lem:1st_der}.
\end{proof}

\section{Computational experiments for a 1D example}\label{sec:computational_experiments}
We start from the example given in \cref{sec:elliptic_example} and provide computational experiments that will serve
several purposes. We compare the approximate lower bounds obtained with approximate McCormick relaxations
before and after applying the OBBT procedure to a feasible primal point obtained with a gradient-based
optimization of \eqref{eq:ocp} and a primal point obtained with the SLIP algorithm from \cite{leyffer2022sequential,manns2023integer}
in the presence of an additional integrality restriction on $w$. We also compare the bounds with lower
bounds on \eqref{eq:ocp} that can be obtained with much less effort than the OBBT procedure.

We analyze how the bounds we obtain from \eqref{eq:mcchh} behave when the partitions
of the domain assumed in \cref{ass:mcch} \ref{itm:partitions} are uniformly refined,
thereby bringing the locally averaged McCormick relaxations closer to a pointwise limit.
Moreover, we apply the OBBT procedure \cref{alg:abstract_obbt} to assess whether and how much the
OBBT procedure tightens the bounds for a prescribed termination tolerance as well as to 
assess its computational effort.

This section is organized as follows. We describe our experiments
in \cref{sec:experiment_description}. In \cref{sec:discretization}
we describe how we approximate \eqref{eq:pde} and \eqref{eq:pde_h}
with a very fine finite-element discretization and use the resulting
discretized equations as state equations for \eqref{eq:ocp}, \eqref{eq:mcch},
and \eqref{eq:mcchh}.
We then give details on the practical implementation of the OBBT
procedure \cref{alg:abstract_obbt} in \cref{sec:obbt_implementation}.
The results are provided in \cref{sec:results}.

\subsection{Experiment description}\label{sec:experiment_description}
We consider the instance of \eqref{eq:ocp} from \cref{sec:elliptic_example}
with the following choices of the parameters and fixed inputs.
The source term $f$ of the PDEs \eqref{eq:pde} and \eqref{eq:pde_h}
is chosen as $f(x) \coloneqq 6$ for all $x \in [0,1]$.
Regarding the bounds, we set $w_\ell \coloneqq -4$, $w_u \coloneqq 4$. For the penalty
parameter, we choose $\alpha \coloneqq 2.5 \cdot 10^{-4}$. For the function $u_d$
in the tracking-type objective (see  \eqref{eq:juw}, we choose
\begin{align*}
u_d(x) &\coloneqq 3
\Big( 1.5 x (1 - x) \chi_{[0,0.25]}(x) 
+ 1.5 x (1 - x) \chi_{[0.75, 1]}\\
&\hphantom{\coloneqq} 
+ (0.28125 + 3(x - 0.25)) \chi_{(0.25,0.4]}(x) \\
&\hphantom{\coloneqq} 
+ (0.73125 - 3(x - 0.6)) \chi_{[0.6,0.75)}(x) 
+ 2\chi_{(0.4,0.6)}(x)
\Big).
\end{align*}

Our main object of interest is to assess the quality of the approximate 
McCormick relaxations, the effect of the OBBT procedure \cref{alg:abstract_obbt}.
To this end, we perform the following computations.
\begin{itemize}
\item We execute a local gradient-based NLP solver (L-BFGS-B) using Scipy's implementation 
      \cite{2020SciPy-NMeth} in order to obtain a stationary point for \eqref{eq:ocp}
      and thus a low upper bound on $m_{\eqref{eq:ocp}}$.
      In order to be able to apply this method to the nondifferentiable total variation
      seminorm, the latter is smoothed by using an overestimating Huber regularization for the absolute
      value in the integrand with smoothing parameter $10^{-3}$.
\item We add the additional integrality constraint $w(x) \in \Z$ and execute the SLIP 
      algorithm \cite{leyffer2022sequential,manns2023on} using the subproblem solver
      described in \cite{severitt2023efficient} in order to obtain a low upper bound
      for the integrality-constrained version of \eqref{eq:ocp}.
\item We use monotonicity properties of the PDE that we do not exploit in our computations
	  elsewhere in order to compute a pointwise McCormick envelope with bounds $u_\ell$, $u_u$
	  that are as tight as possible in order to assess the quality of the approximate
	  McCormick relaxations a posteriori.
\item We compute approximate McCormick relaxations, that is, solutions to \eqref{eq:mcchh},
      with and without bound tightening (and subsequent improvement of the involved constants)
      as well as the induced lower bounds on \eqref{eq:ocp} for decreasing values of $h$
      using the estimates from \cref{sec:apriori}.
\end{itemize}
All experiments were executed on a laptop computer with
Intel (TM) i7-11850H CPU clocked at 2.5\;GHz and 64\;GB main memory.

\subsection{Baseline PDE discretization with Ritz--Galerkin ansatz}\label{sec:discretization}
We consider conforming finite elements and thus a finite-dimensional subspace
$U_N \subset U$ with dimension $N \in \N$. By means of the Lax--Milgram
lemma, we obtain that there exists a unique solution $u_N \in U_N$, the Ritz approximation, of the weak formulation
on $U_N$ of the PDEs \eqref{eq:pde}:
\begin{gather}\label{eq:pde_N}
\int_0^1 \nabla u_N \nabla v_N 
+ \int_0^1 w u_N v_N
= \int_0^1 f v_N
\quad \text{ for all } v_N \in U_N.
\end{gather}
and \eqref{eq:pde_h}
\begin{gather}\label{eq:pde_h_N}
\int_0^1 \nabla u_N \nabla v_N 
+ \int_0^1 (P_hw) (P_h u_N) v_N
= \int_0^1 f v_N
\quad \text{ for all } v_N \in U_N.
\end{gather}
The solutions to \eqref{eq:pde_N} satisfy
\[ \|\nabla u_N\|_H \le \frac{1}{c_1(w_\ell)}\|f\|_H,
\]
and the solutions to \eqref{eq:pde_h_N} satisfy
\[ \|\nabla u_N\|_H \le \frac{1}{c_2(w_\ell,w_u)}\|f\|_H.
\]
The whole analysis from \cref{sec:mccormick,sec:elliptic_example} and, in 
particular, all of the estimates derived for our example PDE in \cref{sec:elliptic_example}
still hold when discretizing the variational form of the PDE with a conforming finite-element
setting and using a piecewise constant control ansatz for $w$ on the same or a coarser grid. The only
required change is to insert $U_N$ as the state space for $U$. We use first-order Lagrange
elements and a discretization of $(0,1)$ into $N = 2,048$ intervals.
Regarding the additional control variable $z$ in the state equation $Au + z = f$
of the pointwise envelope in \eqref{eq:mcc}, we also choose a piecewise constant ansatz on the
$2,048$ intervals, which introduces a small approximation error because a piecewise constant function
cannot completely capture the product of a first-order Lagrange element and a piecewise constant function.

Consequently, in our implementation of \eqref{eq:ocp} and \eqref{eq:mcc}, we use following discretization
that serves as a baseline and substitutes the infinite-dimensional setting in our experiments:
\begin{itemize}
\item discretization of $u$ with first-order Lagrange elements on $N = 2,048$ intervals,
\item discretization of $w$ with piecewise constant functions on $N = 2,048$ intervals, and
\item discretization of $z$ with piecewise constant functions on $N = 2,048$ intervals.
\end{itemize}

\begin{remark}\label{rem:branch_and_bound}
In this \emph{discretize-then-optimize} setting, it is possible to directly employ
a branch-and-bound procedure on the fully discretized version of \eqref{eq:ocp} when
additional integrality restrictions are imposed on $w$ such as $w(x) \in \Z$. Then
\eqref{eq:pde} is just replaced by \eqref{eq:pde_N}; the lower bounds are computed by
means of McCormick relaxations using \eqref{eq:pde_h_N}, and the upper bounds  are, for example,
computed by means of the SLIP algorithm  \cite{leyffer2022sequential,manns2023integer}.
\end{remark}

\begin{remark}
Since the problem considered in this section is defined on an interval,
that is in one dimension, it is possible to reformulate it as an optimal control problem
with a multiple point boundary-value problem as constraint and apply optimal control techniques
like single- or multiple-shooting or collocation for discretization and solution, see the articles
\cite{betts1998survey,rao2009survey} for overviews over such techniques.
Due to our focus on applicability to PDEs, we have decided to use a finite-element
discretization in this work.
\end{remark}

\subsection{Practical implementation of OBBT / \cref{alg:abstract_obbt}}\label{sec:obbt_implementation}
In our implementation of \cref{alg:abstract_obbt}, we first minimize the lower bounds $u_\ell^i$ one by one
along the order of the intervals $i = 1,\ldots,N_h$. Then we maximize the upper bounds $u_u^i$
one by one along the order of the intervals $i = 1,\ldots,N_h$. Then we repeat this process.
We terminate when a complete run of minimization of the $u_\ell^i$ (or maximization of the $u_u^i$)
does not produce a tightening of any of the bounds by more than a prescribed tolerance of $10^{-6}$.

Moreover, in order to avoid numerical problems and in turn incorrect results,
additional safeguarding was necessary. Specifically, when bounds are
set to the computed value in \cref{alg:abstract_obbt} ln.\ \ref{ln:set_new_lb}
or ln.\ \ref{ln:set_new_ub}, the value might be slightly too sharp
because of the numerical precision of the subproblem solver for the OBBT problems.
This, however, can cause the lower bounds to increase and the upper bounds
to decrease to incorrect values in further tightenings and the feasible
set even contracting to an empty set (when the lower and upper bounds cross
each other). In order to avoid this situation, the value $10^{-7}$ is added to the
upper bounds and subtracted from the lower bounds computed by the subproblem
solver for the OBBT problems before they are assigned as new bounds
and \cref{alg:abstract_obbt}  moves on to tighten the next bound.

Because \cref{alg:abstract_obbt} requires valid initial bounds, we use
the bounds that can be inferred from \eqref{eq:Linfbound} and our choice of
$f$. Specifically, we use the initial bounds $u_\ell^i = -5.0444$ and 
$u_u^i = 5.0444$ for all $i \in \{1,\ldots,N_h\}$. 

\subsection{Results}\label{sec:results}
We first ran Scipy's implementation \cite{2020SciPy-NMeth} of
L-BFGS-B on \eqref{eq:ocp}, where we used the unique solvability of 
the (discretized) PDE constraint to integrate it into the
objective and used adjoint calculus in order to compute the derivative. 
To use this gradient-based solver, we have smoothed the
total variation seminorm using a Huber regularization with smoothing 
parameter $10^{-3}$. We inserted the solution into the nonsmooth
objective and obtained an upper bound of \num{8.3808e-02}
on the optimal solution to \eqref{eq:ocp}.

Then we added the integrality constraint $w(x) \in \Z$ to \eqref{eq:ocp}
and executed the SLIP algorithm \cite{leyffer2022sequential,manns2023on} 
with the subproblem solver from \cite{severitt2023efficient} to compute
an upper bound on the optimal solution to \eqref{eq:ocp} with the
additional integrality constraint $w(x) \in \Z$. The resulting
upper bound was \num{8.5551e-02}.

Regarding the lower bounds, we first compute pointwise lower bounds
for comparison. For our computational example, we compute the optimal 
objective value for the uniform bounds $u_\ell \equiv -5.0444$ and
$u_u \equiv 5.0444$ that are enforced on the nodes of our first-order
Lagrange elements. We first compute a lower bound by omitting the
total variation term from the objective ($\alpha = 0$),
which also gives us an overall lower bound on the tracking-type term
in the objective. The resulting linearly constrained convex
quadratic program is solved by using Gurobi \cite{gurobi}, and
the computed lower bound is \num{6.7701e-02}.
Then we consider \eqref{eq:mcc} with the same bounds but including
the total variation term. Again, the resulting linearly constrained convex
quadratic program is solved by using Gurobi \cite{gurobi},
and the computed lower bound  is \num{6.8649e-02}.
Moreover, we can characterize the tightest possible pointwise McCormick 
relaxation exactly. Specifically, the monotonicity properties of the 
Laplacian yield that the discretization of \eqref{eq:pde} attains its 
pointwise minimum  $u_{\min}$ for the choice $w(x) = w_u = 4$ for all
$x \in \Omega$ and its pointwise maximum $u_{\max}$
for the choice $w(x) = w_{\ell} = -4$ for all $x \in (0,1)$. 
Consequently, we execute \eqref{eq:mcc} with the bounds $u_\ell = u_{\min}$
and $u_u = u_{\max}$ to obtain the tightest lower bound the McCormick relaxation can possibly achieve.
We obtain a higher optimal objective value of \num{8.3679e-02}.

\sisetup{scientific-notation=true,round-mode=places,round-precision=3}
This means that if this perfect information on the bounds $u_u$
and $u_\ell$ is available when solving \eqref{eq:mcc}, the gap
between upper and lower bounds gets reduced from \num{0.016902}
to \num{0.0018719999999999893}
in the presence of the constraint $w(x) \in \Z$ (MINLP case) and
from \num{0.015159000000000011} to \num{0.00012900000000000135}
in the continuous case. In both cases, the relative gap, which is 
generally computed as the difference between upper and lower bound
divided by the lower bound,
gets reduced by an order of magnitude. These results are tabulated
in \cref{tbl:exact_bounds}.

\sisetup{scientific-notation=true,round-mode=places,round-precision=3}
\begin{table}[t!]
	\caption{Exact upper and lower bounds and relative gaps (ratio of the difference between upper and lower bound to the lower bound)
	for \eqref{eq:ocp} and
	its counterpart with integrality restriction $w(x) \in \Z$.}\label{tbl:exact_bounds}
	\centering
	\begin{adjustbox}{width=\textwidth}	
		\begin{tabular}{r|ccccc}
			\toprule
			 & \multicolumn{2}{c}{Upper bounds}
			 & \multicolumn{3}{c}{Lower bounds}
			 \\
			 & SLIP ($w(x) \in \Z$)
			 & L-BFGS-B
			 & \eqref{eq:mcc} (tightest) 
			 & \eqref{eq:mcc} 
			 & \eqref{eq:mcc} ($\alpha = 0$)
			 \\
			 \midrule
			 Value 
			 & \num{8.5551e-02}
			 & \num{8.3808e-02}
			 & \num{8.3679e-02}
			 & \num{6.8649e-02} 
			 & \num{6.7701e-02}
			 \\
			 Rel.\ gap (MINLP) 
			 &
			 &
			 & \num{0.02237120424479247}
			 & \num{0.24620897609579165}
			 & \num{0.26365932556387656}
			 \\
			 Rel.\ gap (NLP) 
			&
			&
			& \num{0.0015416054207148906}
			& \num{0.22081894856443665}
			& \num{0.23791376789116847}
			\\		 
			\bottomrule
		\end{tabular}
	\end{adjustbox}
\end{table}
In the remaining experiments, we use these results as a baseline
to compare them with the (approximate) lower bounds obtained
by solving instances of \eqref{eq:mcchh}. We executed our
experiments on \eqref{eq:mcchh} on uniform partitions of the domain
$(0,1)$ into $N_h \in \{8,16,32,64,128,256,512,1024\}$ intervals
with the corresponding values $h = 2^{-N_h}$.

We now consider the approximate lower bounds that are obtained by solving
\eqref{eq:mcchh} for the aforementioned values of $h$.
In particular, we assess their approximation
quality and the running times that are required to compute them
by means of the OBBT procedure for the different values of $h$.
To obtain a valid lower bound on
\eqref{eq:ocp}, we need to subtract $c_{quad} h^2$ from the 
optimal objective value of \eqref{eq:mcchh}; see \eqref{eq:quad_bound}.
In practice, the values $j_0$, $\|\tilde{d}_u\|_H$, and 
$\|\tilde{d}_f\|_H$ are not known precisely because this would imply
that tight pointwise McCormick relaxations have already been computed.

We therefore use two constants in our considerations that allow us to present a range where we expect that the a priori estimates
lie when all of the other constants are known. The
larger and thus more conservative constant is obtained by
choosing $j_0 = 0$ and computing $\tilde{d}_u$ and $\tilde{d}_f$
with the conservative bounds $u_\ell \equiv -5.0444$ and 
$u_u \equiv 5.0444$. The smaller constant is computed by choosing
$j_0 = \num{6.7701e-02}$
as well as the optimal bounds $u_\ell = u_{\min}$ and $u_u = u_{\max}$,
which are of course not available in a realistic setting.
In our setting, the conservative bound is almost $50$ times
larger than the tight one. Both values are given in \cref{tbl:cquad_value}.
\sisetup{scientific-notation=true,round-mode=places,round-precision=3}
\begin{table}
	\centering
	\caption{Values for $c_{quad}$ in \eqref{eq:quad_bound}.}
	\label{tbl:cquad_value}
		\begin{tabular}{cc}
			\toprule
			conservative & tight \\
			\midrule
			\num{56026.22410496081} & \num{1131.734721750359}\\
			\bottomrule
		\end{tabular}	 
\end{table}

For our example, the initial bounds that can be deduced from \eqref{eq:Linfbound}
are $u_\ell^i = -5.0444$ and $u_u^i = 5.0444$ for all $i \in \{1,\ldots,N_h\}$.
After executing our implementation of \cref{alg:abstract_obbt}, the lower bounds and upper
bounds are much closer to each other and reflect the structure of the possible PDE solutions;
that is, they  tend to zero toward the boundaries
of the computational domain. For the finest computed case with $h = 2^{-10}$, $N_h = 1024$, 
and a termination tolerance of $10^{-6}$ for our implementation of the OBBT procedure,
the lower bounds $u_\ell^i$ for $P_hu$ vary between $0$ and $\num{5.2795e-01}$ and the upper bounds
$u_u^i$ vary between $0$ and $\num{1.2762e+00}$, thereby giving a much
tighter envelope in which $P_h u$ for the solutions to \eqref{eq:pde_h} can lie.
Moreover, the optimal solution computed for \eqref{eq:mcchh}
is  close to the optimal solution computed for \eqref{eq:mcc}
for fine values of $h$. Both findings are visualized in  \cref{fig:mcc_and_obbt_envelope}.
\begin{figure}[t]
\begin{center}
	\begin{subfigure}{0.49\textwidth}
		\includegraphics[width=\textwidth]{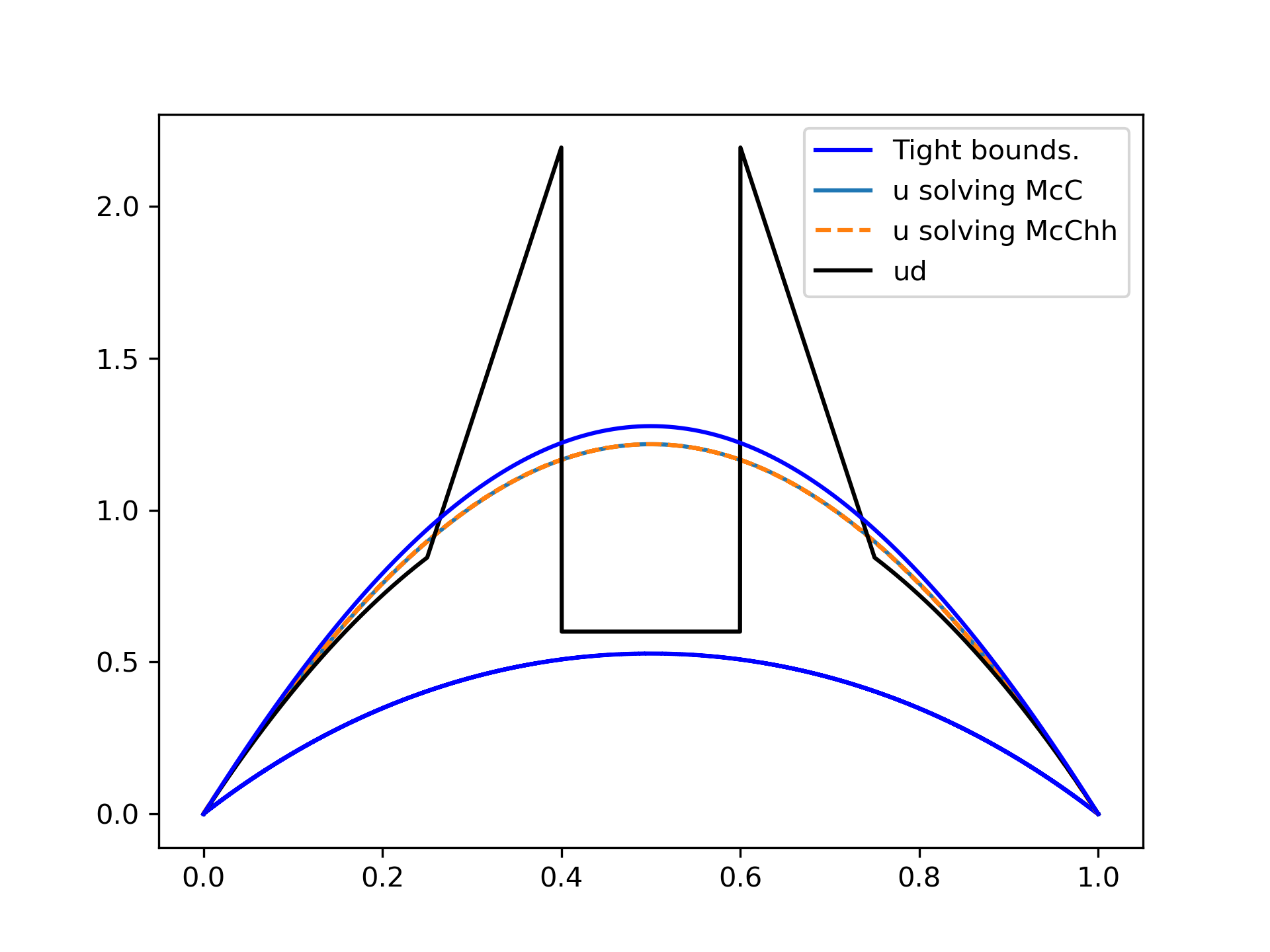}
		\caption{Solutions to \eqref{eq:mcc} and \eqref{eq:mcchh}.}
	\end{subfigure}
	\hfill
	\begin{subfigure}{0.49\textwidth}
		\includegraphics[width=\textwidth]{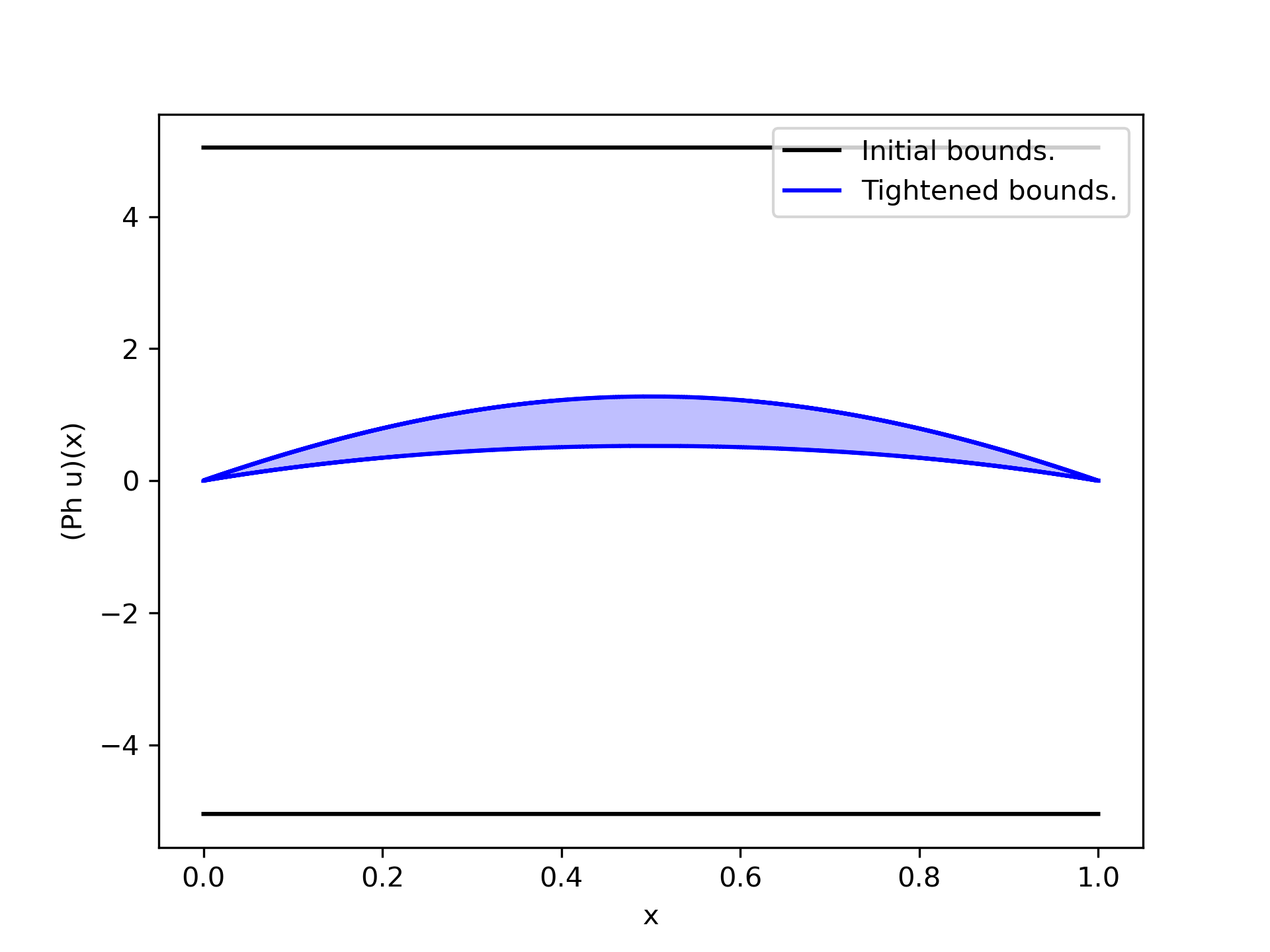}
		\caption{PDE solution envelope after OBBT.}
	\end{subfigure}
\end{center}
\caption{(a) Solutions to \eqref{eq:mcc} (cyan, solid) and \eqref{eq:mcchh} for $h = 2^{-10}$,
	(orange, dashed) with $u_{\min}$, $u_{\max}$
	(dark blue, solid), and tracking function $u_d$
	(black, solid).
(b) Initial (black) and tightened (blue) lower and upper bounds on $P_h u$ for solutions $u$ to \eqref{eq:pde_h} for 
$h = 2^{-10}$. The area between the tightened lower and upper bounds, where the solution
of the PDE can attain values, is colored
with a less intense blue.}\label{fig:mcc_and_obbt_envelope}
\end{figure}

\sisetup{scientific-notation=true,round-mode=places,round-precision=3}
\begin{table}[t]
	\caption{Running times (in seconds) for the OBBT algorithm
	for different values of $h$
	on instances of \eqref{eq:mcchh}
	and \eqref{eq:mcc} (mesh size $2^{-11}$).
	}\label{tbl:obbt_runtimes}
	\centering
	\begin{adjustbox}{width=\textwidth}	
		\begin{tabular}{r|ccccccccc}
			\toprule
			& \multicolumn{8}{c}{\eqref{eq:mcchh} with $h = $} &
			\eqref{eq:mcc} \\
			& $2^{-3}$ & $2^{-4}$ & $2^{-5}$ & $2^{-6}$ & $2^{-7}$ & $2^{-8}$ & $2^{-9}$ & $2^{-10}$ &  \\
			\midrule 
			Time [s]
			& \num{8.21241403398744}
			& \num{16.1153287569905}
			& \num{31.7425664890034}
			& \num{70.25973947100281}
			& \num{165.219088275}
			& \num{456.513897383003}
			& \num{1594.48611089699}
			& \num{10795.856385985}
			& \num{2.1418e+04} \\
			\bottomrule
		\end{tabular}
	\end{adjustbox}
\end{table}
Generally, the increase in compute times for the bounds
is worse than linear with decreasing values of $h$. 
The runtime for the OBBT procedure is $\num{8.21241403398744}$ seconds
for $h = 2^{-3}$ and increases to $\num{10795.856385985}$
seconds
for $h = 2^{-10}$ for \eqref{eq:mcchh} compared with a pointwise bound
computation with $\num{2.1418e+04}$ seconds (mesh size
$2^{-11}$) for \eqref{eq:mcc}.
All running times of the OBBT algorithm are tabulated in \cref{tbl:obbt_runtimes}.

Next, we assess the approximation quality of the bounds
induced by the solutions to \eqref{eq:mcchh}
with and without bound tightening.
In both cases we observe that the infima converge
numerically to their pointwise counterparts of \eqref{eq:mcc} that are tabulated
in \cref{tbl:exact_bounds}. When subtracting 
$c_{quad} h^2$ from each of these values for the conservative
and tight choices of $c_{quad}$ from \cref{tbl:cquad_value},
we obtain valid lower bounds and observe that only for
the smallest grid with $h= 2^{-10}$  all of the lower bounds
 are positive and thus beat the trivial lower bound $0$
that can be found by inspecting the objective.
If the OBBT procedure is applied and a tight choice of $c_{quad}$
is used, the lower bound is positive for the choices of
$h \le 2^{-7}$, and the computed bound beats the pointwise
bound without OBBT for $h = 2^{-9}$ and $h = 2^{-10}$, thereby
yielding the second and third best lower bounds of all computed lower bounds.
All computed values are given in \cref{tbl:mcchh_optimal_values_obbt}.
\sisetup{scientific-notation=true,round-mode=places,round-precision=3,explicit-sign=+}
\newcommand{\highlightcell}[1]{{\cellcolor[gray]{0.85}} #1}
\begin{table}[ht]
	\centering
	\caption{Optimal objective values achieved for \eqref{eq:mcchh} and 
		induced bounds for \eqref{eq:ocp} using the estimate 
		\eqref{eq:quad_bound} with and without OBBT with 
		conservative (c) and tight (t) estimate  on $c_{quad}$. The bounds that are able to beat the trivial bound $0$ that can be directly inferred from
		the nonnegative objective terms are highlighted in gray.}
	\label{tbl:mcchh_optimal_values_obbt}
	\begin{adjustbox}{width=\textwidth}
		\begin{tabular}{lr|llllllll}
			\toprule
			OBBT & $h$
			& $2^{-3}$ & $2^{-4}$ & $2^{-5}$& $2^{-6}$ & $2^{-7}$ & $2^{-8}$ & $2^{-9}$& $2^{-10}$\\
			\midrule
			No & $m_{\eqref{eq:mcchh}}$
			& \num{7.1532e-02}
			& \num{7.0280e-02}
			& \num{6.8681e-02}
			& \num{6.8670e-02}
			& \num{6.8665e-02}
			& \num{6.8656e-02}
			& \num{6.8649e-02}
			& \num{6.8649e-02} \\
			Yes & $m_{\eqref{eq:mcchh}}$
			& \num{8.3730e-02}
			& \num{8.3702e-02}
			& \num{8.3681e-02}
			& \num{8.3680e-02}
			& \num{8.3679e-02}
			& \num{8.3679e-02}
			& \num{8.3679e-02} 
			& \num{8.3679e-02} \\
			No & LB \eqref{eq:quad_bound} (c) &
			\num{-8.7534e+02} &
			\num{-2.1878e+02} &
			\num{-5.4644e+01} &
			\num{-1.3610e+01} &
			\num{-3.3509e+00} &
			\num{-7.8624e-01} &
			\num{-1.4507e-01} 
			& \highlightcell{\num{1.5219e-02}} \\
			Yes & LB \eqref{eq:quad_bound} (c)
			& \num{-8.7533e+02}
			& \num{-2.1877e+02}
			& \num{-5.4629e+01}
			& \num{-1.3595e+01}
			& \num{-3.3359e+00}
			& \num{-7.7121e-01}
			& \num{-1.3004e-01} 
			& \highlightcell{\num{3.0248e-02}} \\
			No & LB \eqref{eq:quad_bound} (t)
			& \num{-1.7612e+01}
			& \num{-4.3506e+00}
			& \num{-1.0365e+00}
			& \num{-2.0763e-01}
			& \num{-4.1057e-04}
			& \highlightcell{\num{5.1387e-02}}
			& \highlightcell{\num{6.4332e-02}}
			& \highlightcell{\num{6.7570e-02}} \\			
			Yes & LB \eqref{eq:quad_bound} (t)
			& \num{-1.7600e+01}
			& \num{-4.3371e+00}
			& \num{-1.0215e+00}
			& \num{-1.9262e-01}
			& \highlightcell{\num{1.4604e-02}}
			& \highlightcell{\num{6.6410e-02}}
			& \highlightcell{\num{7.9362e-02}}
			& \highlightcell{\num{8.2600e-02}} \\
			\bottomrule
		\end{tabular}	
	\end{adjustbox}
\end{table}

We assess the quality of the approximate bounds $m_{\eqref{eq:mcchh}}$
after optimization-based bound tightening with respect to $m_{\eqref{eq:mcc}}$.
We observe that the difference as well as the relative difference becomes
very small for small values of $h$, in particular several magnitudes
smaller than the error margin of the a priori estimate. The obtained values
are given in \cref{tbl:actual_difference}.
\sisetup{scientific-notation=true,round-mode=places,round-precision=3,explicit-sign=}
\begin{table}[ht]
	\centering
	\caption{Difference and relative difference between $m_{\eqref{eq:mcc}}$
		for $u_\ell = u_{\min}$, $u_u = u_{\max}$ and
		$m_{\eqref{eq:mcchh}}$ for
		$u_\ell$, $u_u$ computed using the OBBT
		procedure.}
	\label{tbl:actual_difference}
	\begin{adjustbox}{width=\textwidth}
		\begin{tabular}{r|llllllll}
			\toprule
			h
			& $2^{-3}$ & $2^{-4}$ & $2^{-5}$& $2^{-6}$ & $2^{-7}$ & $2^{-8}$ & $2^{-9}$& $2^{-10}$\\
			\midrule
			$|m_{\eqref{eq:mcchh}} - m_{\eqref{eq:mcc}}|$
			& \num{5.1058e-05}
			& \num{2.2474e-05}
			& \num{2.2915e-06}
			& \num{5.7399e-07}
			& \num{1.4499e-07}
			& \num{3.8199e-08}
			& \num{1.1759e-08}
			& \num{3.4362e-08} \\
			$\frac{|m_{\eqref{eq:mcchh}} - m_{\eqref{eq:mcc}}|}{m_{\eqref{eq:mcc}}}$
			& \num{6.1016e-04}
			& \num{2.6858e-04}
			& \num{2.7384e-05}
			& \num{6.8594e-06}
			& \num{1.7327e-06}
			& \num{4.5649e-07}
			& \num{1.4053e-07}
			& \num{4.1064e-07} \\			
			\bottomrule
		\end{tabular}
	\end{adjustbox}
\end{table}

\section{Computational experiments for an example in 2D}\label{sec:computational_experiments_2D}
While an analysis that verifies all of our assumptions for a PDE on a
multi-dimensional domain is beyond the scope of this article, we still
present an example for which we are certain that we can verify them by means
of the techniques mentioned in \cref{sec:mccormick_pointwise}. Moreover,
the techniques from \cref{sec:apriori} can be combined with
regularity estimates from \cite{grisvard2011elliptic} to derive
a priori estimates as well.
Specifically, we consider a convection-diffusion boundary
value problem that leans on the setting in \cite{baraldi2024domain}.

This section is organized as follows. We describe our experiments
in \cref{sec:2d_experiment_description}. In \cref{sec:2d_discretization}
we describe how we approximate \eqref{eq:pde2d} and its counterpart with
$u$ and $w$ replaced by $P_h u$ and $P_h w$ 
with a finite-element discretization and use the resulting discretized equations
as state equations for \eqref{eq:ocp}, \eqref{eq:mcch}, and \eqref{eq:mcchh}.
We give details on the implementation of the OBBT procedure
in \cref{sec:obbt_implementation}. The results are provided in \cref{sec:results}.

\subsection{Experiment description}\label{sec:2d_experiment_description}
Regarding the state equation, we set $\Omega = (0,1)^2$ and $W = \{0,1\}$.
Let $w$ with $w(x) \in  W$ a.e.\ be given. Then the state vector $u$ is given
by the solution to
\begin{gather}\label{eq:pde2d}
\begin{aligned}
-\varepsilon \Delta u + c_1 \cdot \nabla u + c_2 u w &= f \quad\text{in } \Omega \\
u&= 0 \quad\text{on } \{0,1\} \times (0,1) \cup ((0,0.25) \cup (0.75,1)) \times \{1\} \\
u&= \sin(2 \pi (x_1 - 0.25)) \quad\text{on } (0.25,0.75) \times \{1\} \\
\partial_n u &= 0 \quad\text{on } (0,1) \times \{0\},
\end{aligned}
\end{gather}
where $\varepsilon = 0.04$, $c_2 = 4$, $c_1(x) = (\begin{matrix} \sin(\pi x_1)
& \cos(2 \pi x_2)\end{matrix})^T$ for $x \in \Omega$, 
$f(x) = \sin(2 \pi x_1 + 2 \pi x_2) + 3$ for $x \in \Omega$.
Let $S$ be the control-to-state operator of \eqref{eq:pde2d}.
We choose the objective
\[ j(u) \coloneqq \frac{1}{2}\|u - u_d\|_{L^2}^2, \]
where we compute $u_d$ as follows. We replace the coefficient $c_1$
in \eqref{eq:pde2d} by $\tilde{c}_1(x) = (\begin{matrix} -x_2
& 2 x_1\end{matrix})^T$ and solve this modified boundary value problem
for the control $w = 2.5 \chi_{A} - 4(x_1 - 0.35)^3 \chi_B - 6(x_2 - 0.35)^3\chi_B$,
where we have $A = (0,0.35)^2$ and $B = \Omega\setminus (0,0.35)^2$.

Since our main object of interest is to assess the quality of the approximate 
McCormick relaxations and the effect of the OBBT procedure,
we perform the following computations, where $h$ denotes the mesh size.
\begin{itemize}
	\item We execute a local gradient-based NLP solver in order to obtain a
	stationary point for \eqref{eq:ocp}	and thus a low upper bound on $m_{\eqref{eq:ocp}}$.
	\item We add the additional integrality constraint $w(x) \in \Z$ and execute the SLIP algorithm \cite{leyffer2022sequential,manns2023integer} to obtain a low
	upper bound for the integrality-constrained version of \eqref{eq:ocp}.
	\item Similar to the 1D experiment, we compute a pointwise McCormick envelope
	with bounds $u_\ell$, $u_u$ using OBBT on the nodal basis of the finest
	discretization we have available. This then serves as a baseline in order to assess
	the quality of the approximate McCormick relaxations.
	\item We compute approximate McCormick relaxations, that is, solutions to \eqref{eq:mcchh}, with and without bound tightening for decreasing values of $h$,
	where $h$ is the mesh size of a uniform grid of squares.
\end{itemize}
We have carried out the experiments on a node of the Linux HPC cluster LiDO3 with two
AMD EPYC 7542 32-Core CPUs and 64 GB RAM.

\subsection{Baseline PDE discretization with Ritz--Galerkin ansatz}\label{sec:2d_discretization}
We decompose the domain into a $48 \times 48$ grid of squares, which consist
of four triangles each, on which the state vector of \eqref{eq:pde2d} is
defined. We consider conforming finite elements and thus a finite-dimensional subspace
$U_N \subset U$ with dimension $N \in \N$, specifically, we use first-order Lagrange
elements. The term pointwise envelope refers to the pointwise constraint
bounds are enforced on the nodes of the first-order Lagrange ansatz.
Regarding the additional control variable $z$ in the state equation
of the pointwise envelope in \eqref{eq:mcc}, we also choose a piecewise constant 
ansatz on all triangles similar to the one-dimensional case.

Consequently, in our implementation of \eqref{eq:ocp} and \eqref{eq:mcc}, we use
following discretization that serves as a baseline and substitutes the infinite-dimensional setting in our experiments:
\begin{itemize}
	\item discretization of $u$ with first-order Lagrange elements on $4 \times 48 \times 48$ triangles,
	\item discretization of $w$ with piecewise constant functions on $48 \times 48$ squares, and
	\item discretization of $z$ with piecewise constant functions on $4 \times 48 \times 48$ triangles.
\end{itemize}

\subsection{Practical implementation of OBBT}\label{sec:2d_obbt_implementation}
Our implementation \cref{alg:abstract_obbt} differs from the one described in 
\cref{sec:obbt_implementation} to be able to parallelize over bound computations
due to the very long compute times.
We initialize the lower bounds $u_\ell$ both for the pointwise and the
locally averaged averaged McCormick envelopes by $-10^3$ and the upper bounds
$u_u$ by $10^3$, which are high enough for this example to safely assume that
the true bounds lie within them.

For a given set of bounds, we minimize the lower bounds $u_\ell^i$ and maximize the 
upper bounds $u_u^i$ for all nodes/grid cells in parallel. Then we use this set of
bounds as the new set of bounds and compute another round of tightened bounds.
We repeat this seven times so that eight rounds of bound tightening are executed in total.
Again, in order to avoid numerical problems and obtain correct results, we applied the
additional safeguarding from  \cref{sec:obbt_implementation} but note that a coarser
safety tolerance of $10^{-2}$ was necessary here and we needed to set the
parameter \emph{BarHomogeneous} to one in Gurobi to  prevent incorrect identification
of infeasibility in a few cases that occurred in the eighth round of OBBT on the finest
discretization.

\sisetup{scientific-notation=true,round-mode=places,round-precision=4}
\subsection{Results}\label{sec:2d_results}
We first ran the local gradient-based NLP solver, in which we
employ an anisotropic discretization of the total variation seminorm based
on the piecewise constant control function ansatz, see also Appendix B in 
\cite{manns2023integer} and section 2 in \cite{manns2024discrete}. In the
trust-region method, we use a linear model for the first part of the objective
and handle the trust-region seminorm as in \cite{manns2023integer}
so that our trust-region subproblems are linear programs after discretization.
The resulting objective value was \num{3.2337}.

Then we added the integrality constraint $w(x) \in \Z$ to \eqref{eq:ocp}
and executed the SLIP algorithm \cite{leyffer2022sequential,manns2023integer}
to compute an upper bound on the optimal solution to \eqref{eq:ocp} with the
additional integrality constraint $w(x) \in \Z$. The resulting
upper bound was \num{3.2340}.

Regarding the lower bounds, we first compute pointwise lower bounds
using OBBT as described above. Then we solve
\eqref{eq:mcc} with these bounds using Gurobi \cite{gurobi}
as well as with the initial bounds ($-10^3$ for lower and $10^3$ for upper bounds).
The computed lower bound for the initial bounds is $\num{3.025491e-11}$ and
the computed lower bound for the tightened bounds \num{3.1347}.
These results are tabulated in \cref{tbl:2d_baseline_bounds}.

\sisetup{scientific-notation=true,round-mode=places,round-precision=4}
\begin{table}[t!]
	\caption{Upper and lower bounds and relative gaps (ratio of the difference between upper and lower bound to the lower bound)
		for \eqref{eq:ocp} and
		its counterpart with integrality restriction $w(x) \in \Z$.}\label{tbl:2d_baseline_bounds}
	\centering
	\begin{adjustbox}{width=\textwidth}	
		\begin{tabular}{r|cccc}
			\toprule
			& \multicolumn{2}{c}{Upper bounds}
			& \multicolumn{2}{c}{Lower bounds}
			\\
			& SLIP ($w(x) \in \Z$)
			& NLP solver
			& \eqref{eq:mcc} (OBBT)
			& \eqref{eq:mcc} (initial)
			\\
			\midrule
			Value 
			& \num{3.2340}
			& \num{3.2337}
			& \num{3.1347}
			& \num{3.025491e-11}
			\\
			Rel.\ gap (MINLP) 
			&
			&
			& \num{0.0316894758669090467}
			& \num{106892963818.75685}
			\\
			Rel.\ gap (NLP) 
			&
			&
			& \num{0.031595686987590514}
			& \num{106883246387.76796}
			\\		 
			\bottomrule
		\end{tabular}
	\end{adjustbox}
\end{table}
In the remaining experiments, we use these results as a baseline
to compare them with the (approximate) lower bounds obtained
by solving instances of \eqref{eq:mcchh}. We executed our
experiments on \eqref{eq:mcchh} on uniform partitions of the domain
$\Omega$ into
$N_h \in \{3\times 3,6\times 6,12\times 12,24\times 24,48\times 48\}$ squares
with mesh sizes
$h = \{3^{-1}, 3^{-1}2^{-1}, 3^{-1}2^{-2}, 3^{-1}2^{-3}, 3^{-1}2^{-4} \}$.

We now consider the approximate lower bounds that are obtained by solving
\eqref{eq:mcchh} for the aforementioned values of $h$.
In particular, we assess their approximation
quality and the running times that are required to compute them
by means of the OBBT procedure for the different values of $h$.
To obtain a valid lower bound on \eqref{eq:ocp}, we need to subtract an a priori
bound. Since the a priori analysis for this PDE is beyond of this work, we omit
this step but note that all bounds are already valid lower bounds (see below).

\sisetup{scientific-notation=true,round-mode=places,round-precision=3}
Generally, the increase in compute times for the bounds is worse than linear with 
decreasing values of $h$. The runtime for the OBBT procedure is 
$\num{28}$ seconds for $h = 3^{-1}$ and increases to $\num{95522}$
seconds for $h = 3^{-1}2^{-4}$ for \eqref{eq:mcchh} compared with a pointwise bound
computation with $\num{400783}$ seconds for \eqref{eq:mcc}
(mesh size $h = 3^{-1}2^{-5}$).

Next, we assess the approximation quality of the bounds
induced by the solutions to \eqref{eq:mcchh}
with and without bound tightening.
After 8 rounds of OBBT, the quality of the bounds does not differ much for
$h \le 3^{-1}2^{-1}$ and are close to the pointwise/baseline counterpart of 
\eqref{eq:mcc} so that a similar bound quality can be achieved with 127\,s
of compute time as with 95522\,s and 400780\,s of compute time.
Since they are also always lower, they are actually true lower bounds
for the baseline. Since we have used very conservative initial guesses
to prevent initializing from bounds that cut off feasible points,
the quality without bound tightening is extremely bad.
It is beneficial to make several rounds of bound tightening as is demonstrated
by the quality improvement of the bounds from the 7th to the 8th round.
All running times of the OBBT algorithm and the induced achieved optimal
objective values for \eqref{eq:mcchh} and \eqref{eq:mcc} are tabulated in 
\cref{tbl:2d_obbt_obj_time}.

\sisetup{scientific-notation=true,round-mode=places,round-precision=4}
\begin{table}[t]
	\caption{Running times (in seconds) for the OBBT algorithm
		and optimal objective values
		for different values of $h$
		of \eqref{eq:mcchh}
		and \eqref{eq:mcc} (mesh size $h = 3^{-1}2^{-5}$).
	}\label{tbl:2d_obbt_obj_time}
	\centering
	\begin{adjustbox}{width=\textwidth}	
		\begin{tabular}{r|cccccc}
			\toprule
			& \multicolumn{5}{c}{\eqref{eq:mcchh} with $h = $} &
			\eqref{eq:mcc} \\
			& $3^{-1}$ & $3^{-1}2^{-1}$ & $3^{-1}2^{-2}$ & $3^{-1}2^{-3}$ & $3^{-1}2^{-4}$ &  \\
			\midrule 
			Time (8 OBBT rounds) [s]
			& \num{28}
			& \num{127}
			& \num{790}
			& \num{6489}
			& \num{95522}
			& \num{400783} \\
			Objective (8 OBBT rounds)
			& \num{3.0002}
			& \num{3.1314} 
			& \num{3.1311}
			& \num{3.1301}
			& \num{3.1298}
			& \num{3.1347}\\
			Objective (7 OBBT rounds)
			& \num{2.5795}
			& \num{2.7020} 
			& \num{2.6967}
			& \num{2.6923}
			& \num{2.6912}
			& \num{2.6999}\\
			Objective (0 OBBT rounds)
			& \num{1.2734e+00}
			& \num{4.8655e-02} 
			& \num{6.5599e-04}
			& \num{2.7435e-06}
			& \num{8.3494e-09}
			& \num{3.025491e-11}\\			
			\bottomrule
		\end{tabular}
	\end{adjustbox}
\end{table}

\sisetup{scientific-notation=true,round-mode=places,round-precision=3}
\section{Conclusion}\label{sec:conclusion}
We show how to replace nonlinearities that are given in a pointwise fashion in the PDE of an
(integer) optimal control problem on the example of bilinear terms by analyzing the idea of McCormick envelopes and relaxations in the infinite-dimensional setting. To keep the computational effort manageable, in particular with respect to a bound-tightening procedure that tightens the convex relaxations, we introduce a two-level approximation scheme by means of a grid that decomposes the computational domain, on which the inequalities that yield the convex relaxation are averaged.

Our computational experiments for the 1D and the 2D example validate that the computational effort of such
a procedure can indeed be significantly reduced by using coarser grids (two to three orders of magnitude for
the 2D example). While an a posteriori comparison with the baseline solution shows that the approximation quality
is very good, the a priori approximation quality of the lower bound only improves over trivial bounds when
the mesh size is already quite fine and the involved constants can be estimated well. 
For the one-dimensional test case, the OBBT procedure for $h = 2^{-9}$ yields bounds with (relative) gaps
to the upper bounds of \num{0.06337786913326482e2}\,\% and \num{0.08549350995394171e2}\,\% in the
NLP and MINLP case compared with theoretically optimal bounds
0.1542\,\%
and \num{0.02237120424479247e2}\,\% at \num{0.07444607857395602e2}\,\% of the computational
cost for the pointwise OBBT procedure.

Consequently, in order to use the McCormick relaxations together with a priori estimates in a 
branch-and-bound procedure as sketched in \cref{rem:branch_and_bound},
it is necessary to obtain estimates on $\|u - u_h\|_H$ of higher order, and a 
deliberate analysis of the underlying PDE and its discretization are crucial when 
applying this method. Similarly, good estimates for the constant $L_u$ in 
\eqref{eq:mcchh_approx_lb_ocp} are important, too.

As mentioned before, we have used a priori error estimates in our analysis so far.
Because the actual optimal objective values for \eqref{eq:mcchh} were very close to a true lower bound on
\eqref{eq:ocp} for relatively large values of $h$ (coarser local averaging) both in 1D and 2D, we are
convinced that the analysis and integration of  \emph{a posteriori error estimates} into the procedure is
key for future research to be able to use relatively coarse grids for \eqref{eq:mcchh} and scale this
methodology.

One issue that might arise in practice, in particular for PDEs defined on multidimensional
domains, is that $L^\infty$-bounds as asserted in \eqref{eq:H10embedLinf} for our
example might not readily be available. However, we believe that additional interior regularity
(see, e.g., Theorem 9.51 in \cite{renardy2006introduction}) and nonstandard
regularity theory (see, e.g., \cite{groger1989aw}) can help establish such bounds.

Although this goes significantly beyond the scope of this article, we note that
the McCormick relaxations suggest defining a branch-and-bound algorithm in function
space in the spirit of the recent article \cite{buchheim2024parabolicIII}, 
where---depending on current fixations and the overlapping of the approximate
upper and lower bounds---one  refines the mesh that is used for the locally averaged
McCormick relaxations and the control ansatz adaptively until an acceptable gap is reached.

For more general nonlinearities than $uw$, it may be possible to combine the local averaging on the computational
domain with the successive refinement procedure (\emph{nested intervals}) of the parameter space for factorable
functions that is analyzed in \cite{scott2011generalized}, see in particular section 7 therein for an application
to obtain relaxations of ODEs.

\section*{Acknowledgments}
The authors thank Joachim St\"{o}ckler (TU Dortmund) for the pointer to the proof of 
\eqref{eq:H10embedLinf} in \cite{schmidt1940ungleichung}. The authors thank
Mariia Pokotylo (TU Dortmund) as well as two anonymous referees for helpful comments
on the manuscript.
The authors gratefully acknowledge computing time on the
LiDO3 HPC cluster at TU Dortmund, partially funded in the Large-Scale Equipment
796 Initiative by the Deutsche Forschungsgemeinschaft (DFG) as project 271512359.
Paul Manns acknowledges funding by Deutsche Forschungsgemeinschaft (DFG) under project no.\ 540198933.
This work was also supported by the U.S.~ Department of Energy, Office of Science, Office of Advanced Scientific Computing Research, Scientific Discovery through the
Advanced Computing (SciDAC) Program through the FASTMath Institute under Contract No. DE-AC02-06CH11357.

\appendix

\section{Auxiliary results}
\begin{lemma}\label{lem:lower_bound_for_weak_limit}
Let $f^n \to f$ in $L^1(\Omega)$. Let $g^n \weakto g$ in $L^1(\Omega)$.
If $f^n \le g^n$ (or $f^n \ge g^n$) holds pointwise a.e.\ for all $n \in \N$.
Then $f \le g$ (or $f \ge g$, respectively).
\end{lemma}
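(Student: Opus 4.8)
The statement is a standard lower-semicontinuity-type fact combining strong $L^1$ convergence of one sequence with weak $L^1$ convergence of the other. The plan is to reduce the inequality $f \le g$ a.e.\ to testing against nonnegative test functions, i.e.\ to show $\int_E (g - f)\,\dd x \ge 0$ for every measurable $E \subset \Omega$, which suffices because $g - f \in L^1(\Omega)$ and a function with nonnegative integral over every measurable set is nonnegative a.e. The case $f^n \ge g^n$ follows by applying the $f^n \le g^n$ case to $-f^n$ and $-g^n$, so I would only treat one direction.

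\smallskip

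First I would fix a measurable set $E \subset \Omega$ and write
\[
\int_E (g - f)\,\dd x = \int_E (g - g^n)\,\dd x + \int_E (g^n - f^n)\,\dd x + \int_E (f^n - f)\,\dd x.
\]
The middle term is $\ge 0$ for every $n$ by the pointwise a.e.\ hypothesis $f^n \le g^n$. The last term tends to $0$ since $\left|\int_E (f^n - f)\,\dd x\right| \le \|f^n - f\|_{L^1(\Omega)} \to 0$. For the first term, the characteristic function $\chi_E$ lies in $L^\infty(\Omega)$, which is (contained in) the dual of $L^1(\Omega)$; hence weak convergence $g^n \weakto g$ in $L^1(\Omega)$ gives $\int_E g^n\,\dd x = \int_\Omega g^n \chi_E\,\dd x \to \int_\Omega g \chi_E\,\dd x = \int_E g\,\dd x$, so the first term tends to $0$ as well. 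Taking $n \to \infty$ yields $\int_E (g - f)\,\dd x \ge 0$.

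\smallskip

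Since $E$ was an arbitrary measurable subset of $\Omega$ and $g - f \in L^1(\Omega)$, it follows that $g - f \ge 0$ a.e.\ on $\Omega$; indeed, taking $E = \{x : g(x) - f(x) < -1/k\}$ forces $|E| = 0$ for every $k \in \N$, and the union over $k$ is a null set. This proves $f \le g$ a.e. For the parenthetical variant, apply what was just shown to the sequences $(-f^n)$ and $(-g^n)$: $-f^n \to -f$ in $L^1(\Omega)$, $-g^n \weakto -g$ in $L^1(\Omega)$, and $-g^n \le -f^n$ a.e., hence $-g \le -f$ a.e., i.e.\ $f \ge g$ a.e.

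\smallskip

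I do not expect a genuine obstacle here; the only point requiring mild care is the justification that testing against all indicator functions is legitimate — i.e.\ that $\chi_E$ is an admissible test function for weak $L^1$ convergence (it is, as $\chi_E \in L^\infty(\Omega) \subset (L^1(\Omega))^*$, and $\Omega$ having finite measure is not even needed for this direction) — and the elementary measure-theoretic step at the end converting "nonnegative integral over every measurable set" into "nonnegative a.e." If one prefers, the same conclusion can be reached by testing against $\phi \in L^\infty(\Omega)$ with $\phi \ge 0$ and invoking that the cone of a.e.-nonnegative $L^1$ functions is weakly closed, but the indicator-function argument is the most self-contained.
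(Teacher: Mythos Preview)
Your proof is correct and takes a genuinely different route from the paper. The paper argues by contradiction: assuming $f > g + \varepsilon_2$ on a set $A$ of positive measure, it invokes Egorov's theorem to upgrade the $L^1$-convergence $f^n \to f$ to near-uniform convergence on a large subset $B \subset A$, integrates over $B$, and then uses weak convergence of $g^n$ to reach a contradiction with $f^n \le g^n$. Your argument is direct: you test the difference $g - f$ against $\chi_E$ for arbitrary measurable $E$, split into three terms, and pass to the limit using strong convergence for the $f$-term and weak convergence (with $\chi_E \in L^\infty \subset (L^1)^*$) for the $g$-term. Your approach is shorter, avoids Egorov entirely, and does not rely on $|\Omega| < \infty$; the paper's proof is perhaps more hands-on but introduces machinery that is not really needed here.
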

\begin{proof}
We  prove only the case for the $\le$-inequalities because the other case follows
by symmetry of the argument.
By way of contradiction, we assume that there exist a measurable set $A \subset \Omega$,
$\varepsilon_1 > 0$, and $\varepsilon_2 > 0$ such that $|A| > \varepsilon_1$ and
\[ f(x) > g(x) + \varepsilon_2 \text{ for a.a.\ } x \in A. \]
Egorov's theorem gives that there exists a measurable set $B \subset A$ and $n_0 \in \N$
such that $|B| > \frac{\varepsilon_1}{2}$, and for all $n \ge n_0$, we obtain
\[ f^n(x) > g(x) + \frac{\varepsilon_2}{2} \text{ for a.a.\ } x \in B. \]
Integrating over $B$ yields
\[ \int_B f^n(x) > \int_B g(x) + |B| \frac{\varepsilon_2}{2}. \]
for all $n \ge n_0$. Because $g^n \weakto g$, there exists $n_1 \ge n_0$ such that
\[ \int_B f^n(x) > \int_B g^n(x) + |B| \frac{\varepsilon_2}{4} \]
holds for all $n \ge n_1$. Consequently, there must exist a measurable subset $C \subset B$
of strictly positive measure that $f^n(x) > g^n(x)$ holds for $x \in C$,
which contradicts the  assumption that $f^n \le g^n$ holds pointwise a.e.
\end{proof}

\bibliographystyle{plain}
\bibliography{references}

\bigskip
\framebox{\parbox{.92\linewidth}{The submitted manuscript has been created by
UChicago Argonne, LLC, Operator of Argonne National Laboratory (``Argonne'').
Argonne, a U.S.\ Department of Energy Office of Science laboratory, is operated
under Contract No.\ DE-AC02-06CH11357.  The U.S.\ Government retains for itself,
and others acting on its behalf, a paid-up nonexclusive, irrevocable worldwide
license in said article to reproduce, prepare derivative works, distribute
copies to the public, and perform publicly and display publicly, by or on
behalf of the Government.  The Department of Energy will provide public access
to these results of federally sponsored research in accordance with the DOE
Public Access Plan \url{http://energy.gov/downloads/doe-public-access-plan}.}}
\end{document}